\newtheorem{thm}{Theorem}
\newtheorem{prop}[thm]{Proposition}
\newtheorem{lem}[thm]{Lemma}
\newtheorem{cor}[thm]{Corollary}
\newtheorem{rem}[thm]{Remark}
\theoremstyle{remark}
\newtheorem{remark}[thm]{Remark}        % Numbered along with thm
\numberwithin{equation}{section} \numberwithin{thm}{section}
\newtheorem{definition}[thm]{Definition}
\newcommand*\patchAmsMathEnvironmentForLineno[1]{%
  \expandafter\let\csname old#1\expandafter\endcsname\csname #1\endcsname
  \expandafter\let\csname oldend#1\expandafter\endcsname\csname end#1\endcsname
  \renewenvironment{#1}%
     {\linenomath\csname old#1\endcsname}%
     {\csname oldend#1\endcsname\endlinenomath}}%
\newcommand*\patchBothAmsMathEnvironmentsForLineno[1]{%
  \patchAmsMathEnvironmentForLineno{#1}%
  \patchAmsMathEnvironmentForLineno{#1*}}%
\newcommand{\vf}{\varphi}
\newcommand{\cal}{\mathcal}
\newcommand{\PR}{{P\! R}}
\newcommand{\UPR}{{\cal U}_{\rm \PR}}
\newcommand{\dist}{{\rm dist}\,}
\newcommand{\sph}{{{\mathbb S}}}
\newcommand{\sphd}{{{\mathbb S}^{d-1}}}
\newcommand{\R}{{\mathbb R}}
\newcommand{\N}{{\mathcal N}}
\newcommand{\nor}{{\rm Nor}\,}
\newcommand{\supp}{{\rm supp}\,}
\newcommand{\Tan}{{\rm Tan}\,}
\newcommand{\reach}{{\rm reach}\,}
\newcommand{\subgr}{{\rm subgr}\,}
\newcommand{\spa}{\operatorname{span}}
\newcommand{\eps}{\varepsilon}
\newcommand{\sS}{\cal{S}}
\newcommand{\sM}{\cal{M}}
\def\en{\mathbb N}
\def\er{\mathbb R}
\def\C{\mathcal C}
\def\K{\mathcal K}
\def\H{\mathcal H}
\def\T{\mathcal T}
\def\A{\mathcal A}
\def\G{\cal G}
\def\D{\cal D}
\def\U{\cal U}
\def\ii{\mathfrak i}
\def\cO{\mathcal O}
\def\bM{\mathbf M}
\def\cM{\mathcal M}
\def\MB{\mathcal {MB}}
\newcommand{\graph}{\operatorname{graph}}
\newcommand{\co}{\operatorname{co}}
\newcommand{\epi}{\operatorname{epi}}
\newcommand{\hyp}{\operatorname{hyp}}
\newcommand{\spt}{\operatorname{spt}}
\newcommand{\WDC}{{\rm WDC}}
\newcommand{\UWDC}{{\cal U}_{\WDC}}
\newcommand{\UWDCG}{{\cal U}_{\WDC}^{\cal G}}
\newcommand{\llc}{\;\halfsq\;}
\def\halfsq{\hbox{\kern1pt\vrule height 7pt\vrule width6pt height 0.4pt depth0pt\kern1pt}}
\def\ihalfsq{\hbox{\kern1pt \vrule width6pt height 0.4pt depth0pt
                   \vrule height 7pt \kern1pt}}
\begin{document}

\title{Curvatures for unions of WDC sets}
\author{Du\v san Pokorn\'y}
\thanks{The research was supported by GA\v CR~15-08218S and GA\v CR~18-11058S}

\begin{abstract} 
	We prove the existence of the curvature measures for a class of $\UWDC$ sets, which is a direct generalization of $\UPR$ sets studied by Rataj and Z\"ahle. Moreover, we provide a simple characterisation of $\UWDC$ sets in $\er^2$ and prove that in $\er^2$ the class of $\UWDC$ sets contains essentially all classes of sets known to admit curvature measures.
\end{abstract}

\email{dpokorny@karlin.mff.cuni.cz}

\keywords{WDC set, DC function, normal cycle, curvature measure}
\subjclass[2010]{53C65}
\date{\today}
\maketitle

\section{Introduction}\label{sec:intoduction}

One of the important tasks of modern curvature theory is to extend the notion of curvature to sets with singularities beyond convex sets. This has been done by Federer by constructing the curvature measures for the sets of positive reach (\cite{Fe59}), by Fu in the case of subanalytic sets (\cite{Fu94}), by Z\"ahle and Rataj for certain locally finite unions of sets of positive reach called $\UPR$ sets (\cite{RZ01}) and also for the so called Lipschitz manifolds of bounded curvature (\cite{RZ03},\cite{RZ05}). 
Recently, the existence of the curvature measures has been proven for the class of (locally) $\WDC$ sets (\cite{PR13},\cite{FPR15}).

The aim of the present paper is to extend the curvature theory to the class of $\UWDC$ sets, which is formed by special locally finite unions of $\WDC$ sets (see Definition~\ref{def:UWDC}) that relates to $\WDC$ sets the same way $\UPR$ sets relate to the sets of positive reach.
Since $\WDC$ sets are a direct generalisation of the sets of positive reach, $\UWDC$ sets are a generalisation of $\UPR$ sets.

The plan of the paper is the following.
First we recall some basic definitions and some facts mostly about $\WDC$ sets (Section~\ref{sec:preliminaries}). Next we prove our first main result, the existence of the normal cycle for $\UWDC$ sets (Theorem~\ref{thm:UWDChasNC}) and we also prove the Kinematic Formula for the corresponding curvature measures (Theorem~\ref{thm:kinematicFormula}).
In Section~\ref{sec:inPlane} we prove our second main theorem that provides a geometric description of $\UWDC$ sets in $\er^2$ (Theorem~\ref{T:characterizationUWDC}). The main part of the theorem (equivalence $\ref{cond:MisUWDC}\!\!\iff\!\!\ref{cond:MdcBoundary}$) says that a compact set $M\subset\er^2$ is $\UWDC$ if and only if its complement has finitely many connected components and $\partial M$ is a union of finitely many DC graphs (see Section~\ref{subs:LipADCgRAPHS} for the definition). 
In the last section we add some other observations about $\UWDC$ sets in plane, mainly that every compact subanalytic set in $\er^2$ is $\UWDC$ and also that compact Lipschitz manifolds of bounded curvature of Rataj and Z\"ahle are also $\UWDC$ (and actually even $\WDC$). We in fact believe that in $\er^2$ the class of $\UWDC$ sets is the maximal integral geometric class (in the sense of \cite{FPR15}, cf. also \cite{Fu17}).

\section{Preliminaries}\label{sec:preliminaries}

\subsection{Notation and basic definitions}\color{black}
We will use the notation $A^c$ for the complement of a set $A$. 
In any vector space $V$, we use the symbol $0$ for the zero element and $\spa M$ for the linear span of a set $M$.
By a subspace of $V$ we always mean a linear subspace, unless specified otherwise. The symbol $U(x,r)$ ($ B(x,r)$) denotes the open (closed) ball with centre $x$ and radius $r>0$ (in $\er^d$).
$\Tan(A,a)$ denotes the tangent cone of $A\subset X$ at $a\in X$ ($u\in\Tan(A,a)$ if and only if $u=\lim_{i\to\infty}r_i(a_i-a)$ for some $r_i>0$ and $a_i\in A\setminus\{a\}$, $a_i\to a$).
For a convex set $K\subset\er^d$ the symbol $\nor(K,x)$ denotes the unit normal cone of $K$ at $x$.

%Let $X$ be a (real) Banach space with norm $|\cdot|$. If $x\in X$ and $x^*\in X^*$, we set  $\langle x, x^* \rangle \coloneqq  x^*(x)$. $\Tan(A,a)$ denotes the tangent cone of $A\subset X$ at $a\in X$ ($u\in\Tan(A,a)$ if and only if $u=\lim_{i\to\infty}r_i(a_i-a)$ for some $r_i>0$ and $a_i\in A\setminus\{a\}$, $a_i\to a$). If $X$ is a Hilbert space and $V$ a closed subspace of $X$, we 

We shall work mostly in the Euclidean space $\R^d$ with the standard scalar product $u\cdot v$ and norm $|u|$, $u,v\in\R^d$. The unit sphere in $\R^d$ will be denoted by $\sph^{d-1}$. We denote by $\Pi_V$  the orthogonal projection to $V$.
The angle between two vectors $v,w\in\sph^{d-1}$ (defined as usual by the formula $\arctan(v\cdot w)$) will be denoted $\rho(v,w)$.
The set of all Euclidean motions on $\er^d$ will be denoted $\G_d$ and the unique Haar measure on $\G_d$ will be denoted $\gamma_d$. 
For $t\in\er$ and $v\in\sphd$, $H_{v,t}$ will denote the halfspace in $\er^d$ defined by $\{y\in\er^d: y\cdot v\leq t \}$.
For $A\subset\er^d$ and $\eps>0$ we define the set $A_\eps\coloneqq\{y\in\er^d: |x-y|\leq \eps\}$ and call it the parallel set of $A$ (with a radius $\eps$).
If~$K,M\subseteq\er^d$ are non-empty compact sets we denote by $\dist_{\H}(K,M)$ the Hausdorff distance between $K$ and $M$. Recall that for $\eps>0$
\begin{equation}\label{eq:hausdorffDistance}
\dist_{\H}(K,M)\leq\eps \iff M\subseteq K_\eps\text{ and } K\subseteq M_\eps.
\end{equation}

A mapping is called $K$-Lipschitz if it is Lipschitz with a constant $K$.
If $H$ is a finite-dimensional Hilbert space, $U\subseteq H$ open, $f:U\to\R$ locally Lipschitz and $x\in U$, we denote by $\partial f(x)$ the {\it Clarke subdifferential of $f$ at $x$}, which can be defined as the closed convex hull of all limits $\lim_{i\to\infty}f'(x_i)$ such that $x_i\to x$ and $f'(x_i)$ exists for all $i\in\en$ (see \cite[\S1.1.2]{C}). Since we identify $H^*$ with $H$ in the standard way, we sometimes consider $\partial f(x)$ as a subset of $H$.
For a real function $f$ defined on a neighbourhood of a point $x\in\er$ the symbols $f'_+(x)$ and $f_-'(x)$ will denote the one-sided derivatives at $x$ from  the right and the left, respectively. 
We say that a mapping $f:[a,b]\to\er^d$ is $\C^2$ if there is a $\C^d$ mapping $\tilde f:(c,d)\to\er^d$ for some $(c,d)\supset[a,b]$ such that $\tilde f|_{[a,b]}=f$.

For $n\in\en$ we denote by $\Sigma_n$ the system of all nonempty subsets of $\{1,\dots,n\}$ and we put $\Sigma_n^0=\Sigma_n\cup\{\varnothing\}$.
The cardinality of a set $A$ will be denoted by $|A|$.

For a curve $\gamma:[a,b]\to\er^d$ we denote the image of $\gamma$ by $\Im(\gamma)$ (i.e. $\Im(\gamma)=\gamma([a,b])$). 

The symbol $\chi(A)$ will denote the Euler-Poincar\'e characteristic of a set $A$.

\subsection{Legendrian and normal cycles}  \label{Legendrian}\color{black}
We follow the notation and terminology from the Federer's book \cite{Fe69}.
Given an open subset $U$ of $\er^d$ and $0\leq k\leq d$ an integer, let ${\bf I}_k(U)$ denote the space of $k$-dimensional integer multiplicity rectifiable currents in $U$. Each current $T\in{\bf I}_k(U)$ can be represented by integration as
\begin{equation} \label{current}
T=(\H^{k}\llc W(T))\wedge \iota_Ta_T,
\end{equation}
where $W(T)$ is a $(\H^{k},k)$-rectifiable subset of $U$ (``carrier'' of $T$), $a_T$ is a unit simple tangent $k$-vectorfield of $W(T)$ and $\iota_T$ is an integer-valued integrable function over $W(T)$ (``index function'') associated with $T$. Note that the carrier $W(T)$ is not uniquely determined and need not be closed, in contrast with the support $\spt T$ which is closed by definition.

The mass norm $\bM(T)$ of a current $T$ is defined as the supremum of values $T(\phi)$ over all differential forms $\phi$ with $|\phi|\leq 1$. 
%For a compact set $K\subset U$, the flat seminorm ${\bf F}_K(T)$ of $T$ is the supremum of $T(\phi)$ over all forms $\phi$ with $\spt\phi\subset K$, $|\phi|\leq 1$ and $|d\phi|\leq 1$.
%The topology of flat convergence, i.e., convergence in flat seminorms ${\bf F}_K$ (denoted $T=(F)\lim_i T_i$) is often used for currents, see \cite{RZ01}. This implies the weak convergence ($T_i(\phi)\to T(\phi)$ for any smooth form $\phi$) and is, in fact, equivalent to it if $T_i,T$ are cycles with uniformly bounded mass norms, see \cite[Theorem~8.2.1]{KP08}. 

\begin{definition} \rm
	A {\it Legendrian cycle} is an integer multiplicity rectifiable $(d-1)$-current $T\in{\bf I}_{d-1}(\er^d\times \sph^{d-1})$ with the properties:
	\begin{eqnarray}
	&&\partial T=0\quad (T\text{ is a cycle}),\\
	&&T\llc\alpha=0\quad (T\text{ is Legendrian}),
	\end{eqnarray}
	where $\alpha$ is the contact $1$-form in $\er^d$ acting as 
	$\langle (u,v),\alpha(x,n)\rangle=u\cdot n$ (cf.\ \cite{Fu94}). 
\end{definition}

%By the fundamental uniqueness theorem due to Fu \cite[Theorem~4.1]{Fu1}, a compactly supported Legendrian current $T$ is uniquely determined by its restriction to the Gauss form $T\llc\varphi_0$ and this can be described as follows. For any $\phi\in C^\infty_c(\er^d\times\sph)$,
%\begin{eqnarray}   
%T(\phi\varphi_0)&=&\cO_{d-1}^{-1}\int_{\sph}\sum_{x\in\er^d}\phi(x,w)\iota_T(x,w)\, \H^{d-1}(dw)\quad  \label{Tg}  \\
%&=&\sum_{x\in\er^d}\phi(x,w)\iota_T(x,n)\quad \text{for a.a.\ }w\in\sph , \label{Tg2}
%\end{eqnarray}
%where $\iota_T$ is the {\it index function} of $T$ (a locally $\H^{d-1}$-integrable integer-valued function on $:\er^d\times\sph$), cf.\ \cite[Proposition~4, Theorem~4]{RZ05}.
%
%For the definition of the slice $\langle T,\pi_1,v\rangle$ of $T$ by the coordinate projection $\pi_1:(x,n)\mapsto n$ at point $v$, see \cite[Section~4.3]{Fe69}, in view of \S4.3.13. (We apply the $(d-1)$-form $n\lrc\Omega_d$ on $\sph$ defining the orientation.) The slice
%$\langle T,\pi_1,v\rangle\in{\bf I}_0(\er^d\times\sph)$ is a zero-dimensional integral current (hence, a signed counting measure) for $\H^{d-1}$-almost all $v\in\sph$. The following description of the slice follows from \eqref{Tg}:
%$$\langle T,\pi_1,v\rangle = (\H^0\llc (\pi_1)^{-1}\{v\})\wedge \iota_T,$$
%hence,
%\begin{equation}  \label{E_slice}
%\langle T,\pi_1,v\rangle (\phi)=\sum_x\iota_T(x,v)\phi(x,v),\quad \phi\in C^0_c(\er^d\times\sph),
%\end{equation}
%for $\H^{d-1}$-almost all $v\in\sph$.

Let $v\in\sphd$ and $t\in\er$ be given. We shall say that the current $T$ {\it touches} the halfspace $H_{v,t}$ (or, equivalently, that $H_{v,t}$ touches $T$) if there exists a point $x\in\er^d$ such that $(x,-v)\in\spt T$ and $x\cdot v=t$.

\begin{definition}{(\cite[Definition~4.2]{PR13})}  \label{D-NC}  \rm
	We say that a compact set $A\subset\er^d$ admits a {\it normal cycle} $T$ if $T$ is a Legendrian cycle satisfying \begin{equation}\label{eq:NormalCycleTouching}
	\text{almost all halfspaces do not touch }T
	\end{equation} 
	and
\begin{equation}\label{eq:NormalCycleEuler}
\langle T,\pi_1,-v\rangle(H_{v,t}\times \sph^{d-1})=\chi(M\cap H_{v,t})\; \text{for $\H^d$-almost all $(v,t)\in\sph^{d-1}\times\er$}.
\end{equation}
	Such a $T$ is then unique (see \cite[Theorem~3.2]{Fu94} and \cite[Lemma~4.4]{PR13}), we write $T=N_A$ and call it the {\it normal cycle of} $A$.
\end{definition}\color{black}

\begin{remark} \rm  \label{rem-uniq}
There are various classes of sets known to admit the normal cycle, such as (compact) sets with positive reach \cite{Z86}, ${\mathcal U}_{\operatorname{PR}}$ sets defined in \cite{RZ01}, subanalytic sets (see \cite{Fu94}), the so-called Lipschitz manifolds with locally bounded inner curvature $\MB_d$ defined and studied in \cite{RZ03} and \cite{RZ05}, or (locally) $\WDC$ sets defined in \cite{PR13} (see Section~\ref{S_WDC} for more details).
\end{remark}

We will not need the exact definitions of subanalytic sets or of the class $\MB_d$, we will only use their following (well known) properties.
We start with two definitions.

\begin{definition}{(cf. also \cite[Conjecture~5.2]{FPR15})}
For $d\in\en$ we define the class $\mathcal N(\er^d)$ of compact sets $A\subset\er^d$ with the property that
there exists a monotone sequence $M_1\supset M_2 \supset\cdots $ of compact $\C^2$-smooth domains $\bigcap_n M_n=  A$, where the masses of $N_{M_n}$ are bounded by a fixed constant.	
\end{definition}

\begin{definition}
	Suppose that $M\subset\er^d$ admits the normal cycle $N_M$ with the corresponding index function $\ii_M$.
	We will say that $M$ satisfies condition (I) if $\ii_M$ has the following property: if 
	\begin{equation*}
	U\cap  \partial M \subset K\quad\text{and}\quad x\in U\cap \partial M \cap \partial K
	\end{equation*}
	for some $U\subset \er^d$ non-empty open and $K$ convex,
	then 
	\begin{equation*}
	\ii_M(x,v)\not =0
	\end{equation*}
	for almost every $v\in\nor(K,x)$.
\end{definition}

\begin{prop}\label{prop:subanalytic}
	Let $M$ be a nonempty compact subanalytic set in $\er^d$. Then $M\in\N(\er^d)$.
\end{prop}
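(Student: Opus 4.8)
The plan is to realise $M$ as a decreasing intersection of regular sublevel sets of a smooth subanalytic defining function and to bound the resulting normal cycle masses by integral geometry. First I would invoke the standard fact that for a compact subanalytic set $M\subset\er^d$ there is a nonnegative subanalytic function $f\in C^{d+2}(\er^d)$ with $f^{-1}(0)=M$ and $\{f\le c\}$ compact for some $c>0$ (existence follows, e.g., from a $C^{d+2}$ cell decomposition of $\er^d$ adapted to $M$). Since $f$ is $C^1$ and subanalytic, $\nabla f$ is subanalytic, so the critical set $C=\{\nabla f=0\}$ is subanalytic; then $f(C)\cap[0,c]$, being the continuous image of the compact subanalytic set $C\cap\{f\le c\}$, is a compact subanalytic subset of $\er$, and by Sard's theorem (applicable since $f\in C^{d+2}$) it has Lebesgue measure zero, hence is finite. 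As $f$ therefore has only finitely many positive critical values $\le c$, I may pick $r_0\in(0,c]$ so that $\nabla f\neq0$ on $\{0<f\le r_0\}$; then every $r\in(0,r_0]$ is a regular value and $M_r:=\{f\le r\}$ is a compact $C^{d+2}$-smooth domain with $M\subset\intr M_r$. For any sequence $r_n\downarrow0$ in $(0,r_0]$ the sets $M_{r_1}\supset M_{r_2}\supset\cdots$ are compact $C^2$-smooth domains with $\bigcap_n M_{r_n}=\{f\le0\}=M$, so it remains only to bound $\bM(N_{M_{r_n}})$ uniformly in $n$.

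For this I would use that, for a compact $C^2$-smooth domain $D$, the normal cycle $N_D$ is the integration current with unit multiplicity over the $(d-1)$-dimensional graph $\Gamma_{\partial D}=\{(x,\nu_D(x)):x\in\partial D\}\subset\er^d\times\sphd$ of the outer unit normal, so $\bM(N_D)=\H^{d-1}(\Gamma_{\partial D})$. Applying this with $D=M_r$ and writing $\Gamma_{\partial M_r}=\Phi(\{f=r\})$ for the subanalytic map $\Phi(x):=(x,\nabla f(x)/|\nabla f(x)|)$ on $\{0<f\le r_0\}$, I observe that $\bigcup_{0<r\le r_0}\Gamma_{\partial M_r}=\Phi(\{0<f\le r_0\})$ is a bounded subanalytic subset of $\er^d\times\sphd$. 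By the Cauchy--Crofton formula, $\H^{d-1}(\Gamma_{\partial M_r})$ equals a dimensional constant times the integral, over affine subspaces $L$ of complementary dimension, of $\#\big(\Gamma_{\partial M_r}\cap L\big)$; only $L$ meeting a fixed bounded region contribute, so the integral is over a set of finite measure, and by the uniform finiteness theorem for subanalytic sets, applied to the bounded subanalytic family of sections $\Gamma_{\partial M_r}\cap L$ restricted to parameters with $0$-dimensional (hence finite) fibre, these cardinalities are bounded by a single integer $N$. This gives $\bM(N_{M_{r_n}})=\H^{d-1}(\Gamma_{\partial M_{r_n}})\le C_0$ for all $n$, with $C_0$ depending only on $d$, on $N$, and on the size of the region, which is exactly the uniform mass bound in the definition of $\N(\er^d)$.

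The hard part is precisely this last bound: one must rule out the total absolute curvatures of the hypersurfaces $\{f=r\}$ blowing up as $r\to0^+$, and the idea above is to convert this, via integral geometry, into a counting question for linear sections of one fixed bounded subanalytic set, to which the uniform finiteness theorem applies. (Alternatively one could quote Fu's theorem that compact subanalytic sets admit normal cycles together with a parallel-set homotopy formula to bound $\bM(N_{M_r})$ in terms of $\bM(N_M)$; for membership in $\N(\er^d)$ only the mass bound, not the identification of $\lim_n N_{M_{r_n}}$ with $N_M$, is needed.) The routine points to verify are the existence and regularity of the subanalytic $C^{d+2}$ defining function $f$, and the fact that for almost every affine subspace $L$ and every $r\in(0,r_0]$ the section $\Gamma_{\partial M_r}\cap L$ is $0$-dimensional, so the Cauchy--Crofton estimate is not vacuous — this follows from Sard's theorem applied to each fixed $C^1$ submanifold $\Gamma_{\partial M_r}$.
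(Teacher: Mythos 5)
Your argument is correct and is essentially the standard proof: the paper itself offers no argument here, merely pointing to the proof of Nicolaescu's Lemma~3.1, which follows the same route (a smooth subanalytic defining function, regular sublevel sets shrinking to $M$, and a Cauchy--Crofton plus uniform-finiteness bound on the Hausdorff measure of the Gauss-map graphs, which all sit inside one bounded subanalytic subset of $\er^d\times\sph^{d-1}$). The one ingredient you should cite precisely rather than wave at is the existence of a nonnegative subanalytic $C^{d+2}$ function with zero set exactly $M$ (Bierstone--Milman); the remaining steps are as you describe.
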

\begin{proof}
	This is a well known fact, see e.g. the proof of \cite[Lemma~3.1]{N11}
\end{proof}
\color{black}
\begin{prop}\label{prop:innerCurvature}
	Let $M\subset\er^d$ be a compact set belonging to $\MB_d$, then $M$ satisfies condition (I).
\end{prop}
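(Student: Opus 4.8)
The plan is to unpack the definition of $\MB_d$ (Lipschitz manifolds with locally bounded inner curvature) and to use the local structure these sets have near a boundary point, together with the description of their normal cycle and index function established by Rataj and Z\"ahle. The key observation is that condition (I) is a purely local assertion about the index function $\ii_M$ at a point $x$ where $\partial M$ is locally ``pinched'' between a convex set $K$ and its boundary, so it suffices to understand $N_M$ in a small neighbourhood $U$ of such an $x$. First I would recall that for $M\in\MB_d$ the set $\partial M$ is, locally near $x$, a Lipschitz image of a piece of $\er^{d-1}$ (more precisely, a Lipschitz manifold in the sense of \cite{RZ03},\cite{RZ05}), and that its normal cycle $N_M$ is carried by the ``unit normal bundle'' $\Nor(M)$ which, over $U$, consists of pairs $(y,v)$ with $y\in U\cap\partial M$ and $v$ in a suitable normal cone; the index function $\ii_M(y,v)$ is then computed from the local one-sided normal data of the Lipschitz chart.

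The core of the argument is to show that the hypothesis $U\cap\partial M\subset K$ with $x\in\partial M\cap\partial K$ forces $M$ to occupy, near $x$, essentially all directions on the $K$-side, so that for a.e.\ $v\in\nor(K,x)$ the halfspace supporting $K$ at $x$ with outer normal $v$ is also (up to the local Lipschitz graph) a supporting halfspace of $M$ at $x$, and $x$ is a ``convex-type'' boundary point of $M$ in direction $v$. Concretely, I would fix a generic $v\in\nor(K,x)$, intersect $M$ with halfspaces $H_{v,t}$ for $t$ slightly below $x\cdot v$, and use the defining property \eqref{eq:NormalCycleEuler} of the normal cycle: since $U\cap\partial M\subset K\subset H_{v,x\cdot v}$, the slice $\langle N_M,\pi_1,-v\rangle$ localized near $x$ computes the jump in $\chi(M\cap H_{v,t})$ as $t$ crosses $x\cdot v$, and the pinching condition guarantees this jump is nonzero (the point $x$ is genuinely ``seen'' as an extreme point of $M$ in direction $v$, contributing to the Gauss map degree). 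This yields $\ii_M(x,v)\neq 0$ for a.e.\ such $v$ via the monotone-slicing / degree argument that underlies the uniqueness of $N_M$ in Definition~\ref{D-NC}.

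The step I expect to be the main obstacle is making the localization rigorous: the Euler characteristic condition \eqref{eq:NormalCycleEuler} is a global statement about $M\cap H_{v,t}$, whereas I need a local contribution at $x$. To handle this I would either (i) exploit the additivity/locality of the normal cycle — restricting $N_M$ to $U\times\sphd$ and comparing with the normal cycle of a convex model (e.g.\ $K$ itself, or the local epigraph of the Lipschitz chart) that is tangent to $M$ at $x$ from the correct side — or (ii) invoke the explicit formula for $\ii_M$ on $\MB_d$ sets from \cite{RZ03},\cite{RZ05} in terms of the Clarke-type one-sided tangent structure of the Lipschitz manifold, and check directly that the pinching hypothesis makes the relevant one-sided determinant (which equals $\pm1$ or a nonzero integer for a.e.\ normal direction) nonvanishing. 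Approach (ii) is likely the cleaner route, since for $\MB_d$ the index at a normal $(x,v)$ is controlled by the local inner curvature, and at a point pinched between $K$ and $\partial K$ the manifold has, in a.e.\ direction of $\nor(K,x)$, a well-defined supporting hyperplane, forcing the index to equal the corresponding convex index, which is $1$. A minor technical point to dispatch along the way is that the set of ``bad'' directions $v\in\nor(K,x)$ — those where the Lipschitz chart fails to be differentiable or where slicing is not transverse — is $\H^{d-1}$-null on the relevant normal cone, which follows from Rademacher's theorem applied to the chart and from the coarea/slicing theory already used to define $N_M$.
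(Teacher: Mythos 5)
The paper's entire proof of Proposition~\ref{prop:innerCurvature} is a one-line citation: condition (I) ``follows immediately from [RZ05, Lemma~3]''. That lemma of Rataj and Z\"ahle already asserts precisely the property you are trying to establish, namely that for a set in $\MB_d$ the index $\ii_M(x,v)$ is nonzero for almost every $v$ in the normal cone of a convex body that locally traps $\partial M$ at $x$. Your approach~(ii) --- invoking the description of $\ii_M$ for $\MB_d$ sets from \cite{RZ03} and \cite{RZ05} and checking that the pinching hypothesis forces a nonzero index --- is therefore essentially the paper's argument, except that you propose to reconstruct from the local Lipschitz-chart data what is in fact available as a ready-made statement; had you located the specific lemma, the proof would collapse to a citation. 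Your approach~(i), by contrast, is not what the paper does and carries the genuine difficulty you yourself flag: the Euler-characteristic identity \eqref{eq:NormalCycleEuler} is global in $M\cap H_{v,t}$, and extracting from it a \emph{local} contribution of the single point $x$ to the slice $\langle N_M,\pi_1,-v\rangle$ requires either an additivity/locality principle for $N_M$ on $\MB_d$ sets or a local index formula --- in other words, it requires exactly the content of [RZ05, Lemma~3] that approach~(ii) would import directly. So the proposal is on the right track via route~(ii), but route~(i) should not be presented as a viable self-contained alternative without resolving that localization step.
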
\color{black}
\begin{proof}
	Follows immediately from \cite[Lemma~3]{RZ05}.
\end{proof}

\subsubsection{Curvature measures via normal cycles}
The existence of the normal cycle of a set $M$ allows us to define the so-called curvature measures of $M$ as follows:
given $k\in\{ 0,\ldots,d-1\}$, let $\varphi_k$ be the $k$-th Lipschitz-Killing differential $(d-1)$-form on $\er^{2d}$ which can be described by
\begin{eqnarray*}
	\lefteqn{\langle a^1\wedge\cdots\wedge a^{d-1},\varphi_k(x,n)\rangle}\\
	&=&\cO_{d-k-1}^{-1}\sum_{\sum_i\sigma(i)=d-1-k}\langle\pi_{\sigma(1)}a^1\wedge\cdots\wedge\pi_{\sigma(d-1)}a^{d-1}\wedge n,\Omega_d\rangle,
\end{eqnarray*}
where $a^i$ are vectors from $\er^{2d}$, $\pi_0(x,n)=x$ and $\pi_1(x,n)=n$ are coordinate projections, the sum is taken over finite sequences $\sigma$ of values from $\{0,1\}$, $\Omega_d$ denotes the volume form in $\er^d$ and $\cO_{d-1}=\H^{d-1}(\sphd)=2\pi^{d/2}/\Gamma(\frac d2)$. 
%Note that, in particular, $\varphi_0$ is an $\cO_{d-1}^{-1}$-multiple of the $\pi_1$-pull-back of the volume form $n\lrc\Omega_d$ on $\sphd$:
%$$\varphi_0=\cO_{d-1}^{-1}\pi_1^{\#}(n\lrc\Omega_d).$$

The $k$-th curvature measure of $A$,  $C_k(A,\cdot)$, $k=0,\dots, d-1$, is then defined by
$$C_k(A,F)=(N_A\llc(F\times\er^d))(\varphi_k),\quad F\subset\er^d.$$
We also define the $k$th (total) curvature of $A$ by 
$N_A(\varphi_k)=C_k(A)$ and, finally, the variational measure of $C_k(A,\cdot)$ is denoted by $C_k^{var}(A,\cdot)$. 
Finally, for completeness we also define $C_d(A,F)=\H^d(A\cap F)$. \color{black}

%Integrating $\varphi_k$ over $\nor A$ yields the $k$th (total) curvature of $A$, which can also be expressed as the integral of the $k$th symmetric function of principal curvatures of $A$:
%$$N_A(\varphi_k)=\int_{\nor A}\varphi_k=C_k(A).$$
%For completeness, we define also $C_d(A)=\H^d(A)$.
%The $k$th curvature measure of $A$, $C_k(A,\cdot)$, is obtained by localizing with a Borel set $F\subset\er^d$
%$$C_k(A,F)=(N_A\llc(F\times\er^d))(\varphi_k)=\int_{\nor A}({\bf 1}_F\circ\pi_0)\varphi_k.$$

%
%\begin{remark} \rm
%	It is often convenient to replace \eqref{E-NC} or \eqref{LGB} with a condition prescribing a particular form of the index function $\iota_T$ of $T$. Fu \cite{Fu1} considered the following natural form 
%	\begin{equation} \label{index}
%	\iota_T(x,n):=\lim_{r\searrow0}\lim_{s\searrow0}[\chi(A\cap B(x,r)\cap\{p:(p-x)\cdot n\leq t\})|^{t=s}_{t=-s}].
%	\end{equation}
%	We obtain in Proposition~\ref{P_local} a slightly different expression where the limits will be replaced by certain weaker forms.
%\end{remark}

\subsection{Delta convex functions and (locally) WDC sets}

 \label{S_WDC}\color{black}

A real function $f$ defined on an open convex set is called DC (delta-convex) when it can be expressed as a difference of two convex functions.
A function $f$ defined on an open set $U$ is said to be locally DC, if for every $x\in U$ there is an open convex set $V\subset U$ containing $s$ such that $f|_{V}$ is DC.
Note that every DC function is locally Lipschitz and that every semi-convex (or semi-concave) function is also DC.
A mapping $F$ (to $\er^d$) defined on a convex set is called a {\it DC mapping} if every component of $F$ is a DC function.

A function $f$ (or a mapping $F$) defined on an interval $[a,b]$ is called DCR if there is a DC function $g$ (DC mapping $G$) defined on some interval $(c,d)\supset [a,b]$ such that $f=g|_{[a,b]}$ ($F=G|_{[a,b]}$).

It is well known that for two DC functions $f,g$ all the functions $f+g$, $fg$, $\max(f,g)$ and $\min(f,g)$ are DC.
Also, if $F,G$ are two DC mappings and $F\circ G$ makes sense then $F\circ G$ is a DC mapping as well (see \cite{VZ89} and \cite{H59}). Apart from those basic properties, we will also need the following results about DC functions.

\begin{lem}{\cite[Lemma 4.8.]{VZ89}}\label{L:Mixing}
	Suppose that $K\subset\er^d$ is open convex and let $f_i: K \to \er$, $i=1,\dots,m$, be DC functions and $f: K \to Y$ a continuous function
	such that $f(x) \in \{f_1(x),\dots,f_m(x)\}$ for each $x \in K$. Then $f$ is DC on $K$.
\end{lem}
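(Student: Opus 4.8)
The plan is to localise the problem and then reduce it to one variable. Recall the classical fact that a function which is locally DC on an open convex set is DC there (\cite{H59}; cf.\ also \cite{VZ89}); thus it suffices to show $f$ is DC on a convex neighbourhood of each $a\in K$. Fix $a$. By continuity, any $f_i$ with $f_i(a)\neq f(a)$ differs from $f$ on a whole neighbourhood of $a$ and may be discarded, so---replacing $K$ by a small convex neighbourhood $V$ of $a$---we may assume $f_1(a)=\dots=f_m(a)=f(a)$. Writing each $f_i=p_i-q$ on $V$ with one \emph{common} convex function $q$ (for instance the sum of the convex functions subtracted off in DC representations of the $f_i$) and $p_i:=f_i+q$ convex, the function $g:=f+q$ is continuous, satisfies $g(x)\in\{p_1(x),\dots,p_m(x)\}$ for $x\in V$, and the $p_i$ are convex and coincide at $a$. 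Since $q$ is DC, it is enough to prove that such a $g$ is DC near $a$.

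So the problem is reduced to showing that a continuous selection $g$ of finitely many convex functions $p_1,\dots,p_m$ (coinciding at $a$) is DC near $a$. I would do this by restricting to line segments. Along a segment $\ell\subset V$, $g|_\ell$ is a continuous selection of the one-variable convex functions $p_i|_\ell$, and I claim it is DC on $\ell$ with a decomposition controlled only by $m$ and by $\max_i\lip(p_i|_\ell)$. Consider the relatively closed ``blocks'' $\{t\in\ell:\ g|_\ell(t)=p_i|_\ell(t)\}$: on such a block $g|_\ell$ coincides with the convex function $p_i|_\ell$, so the variation of $(g|_\ell)'$ coming from the blocks of a fixed index $i$ is at most $\var\big((p_i|_\ell)'\big)\le 2\lip(p_i|_\ell)$. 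At a point $t$ where $g|_\ell$ passes from a block of index $i$ to one of index $j$ one necessarily has $p_i|_\ell(t)=p_j|_\ell(t)$, and the jump of $(g|_\ell)'$ there is at most the jump of $(p_j|_\ell)'$ at $t$ plus $\big|(p_i|_\ell-p_j|_\ell)'_-(t)\big|$. The first terms summed over all switching points are harmless; for the second, fix $(i,j)$, put $w:=p_i|_\ell-p_j|_\ell$ (a difference of two convex functions on $\ell$), note that every switching point of type $(i,j)$ is a zero of $w$, and that between two consecutive zeros of $w$ the derivative $w'$ must change sign---so a Rolle-type estimate bounds the sum of $\big|w'_-(t)\big|$ over those switching points in terms of $\var(w')\le\var\big((p_i|_\ell)'\big)+\var\big((p_j|_\ell)'\big)$ (plus a boundary contribution from the at most two switching points next to the ends of $\ell$). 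Putting these together, $(g|_\ell)'$ has bounded variation, with a bound uniform over all segments lying in a fixed compact subset of $V$; hence every $g|_\ell$ is DC with uniformly controlled decomposition, from which I would conclude that $g$, and therefore $f$, is DC near $a$.

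The main obstacle I expect is precisely this last implication: passing from the uniform one-dimensional estimates along all segments to DC-ness of $g$ as a several-variable function. Making this rigorous requires either assembling an honest several-variable DC decomposition of $g$ from the segmentwise data, or else reorganising the whole argument as an induction on $m$ (peeling off one function at a time through the identity $g=\max(g,p_m)+\min(g,p_m)-p_m$), in which case the difficulty moves to the fact that the sets $\{g\ge p_m\}$ and $\{g\le p_m\}$ need not be convex, so the localisation step has to be iterated.
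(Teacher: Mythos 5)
The paper does not prove this lemma at all: it is imported verbatim from \cite[Lemma 4.8]{VZ89}, so there is no in-paper argument to compare against and your attempt stands or falls on its own. Your reductions are fine: localisation via ``locally DC implies DC'', discarding the $f_i$ with $f_i(a)\neq f(a)$, and extracting a common convex $q$ so that the problem becomes one about a continuous selection $g$ of finitely many \emph{convex} functions $p_1,\dots,p_m$. The one-dimensional estimate is also morally the right computation, although as written it presumes a locally finite switching structure: the sets $\{g=p_i\}$ and the coincidence sets $\{p_i=p_j\}$ can be Cantor-like, so there need not be ``consecutive zeros'' of $w=p_i-p_j$, and the Rolle-type bound has to be organised over arbitrary finite partitions rather than by enumerating switching points.

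The genuine gap is exactly the one you flag at the end, and it is not a technicality: a uniform bound on $\var\bigl((g|_\ell)'\bigr)$ over all segments $\ell$ does not by itself yield DC-ness in several variables. A real-valued $g$ is DC on $V$ if and only if there is a \emph{single} convex $\omega$ on $V$ with $\omega+g$ and $\omega-g$ convex, i.e.\ such that on every segment the second-derivative measure of $\omega|_\ell$ dominates the total variation measure of $(g|_\ell)'$. A uniform bound on the total mass of these measures says nothing about where along each segment the mass sits, so no fixed $\omega$ (for instance a multiple of $|x|^2$) can be read off from it --- already $g(t)=-|t|$ has derivative of variation $2$ but its control function must carry an atom at the right place; to build $\omega$ one must anticipate the singular parts on all lines simultaneously. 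Thus the passage from the segmentwise estimates to a several-variable decomposition is essentially the entire content of the lemma, and the sketch does not supply it. The fallback you mention (peeling off $p_m$ via $g=\max(g,p_m)+\min(g,p_m)-p_m$) does not close the gap either, since $g$ is not yet known to be DC and $\max(g,p_m)$ is again only a continuous selection of $m$ convex functions, while the sets $\{g\ge p_m\}$, $\{g\le p_m\}$ fail to be convex, as you note. What a complete proof must exploit is the additional structure that switching between $p_i$ and $p_j$ can occur only on $\{p_i=p_j\}$, which is how \cite{VZ89} manufacture a common control function for $f$ out of control functions for the $f_i$.
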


Let $f$ be a Lipschitz function on an open set $U\subset\er^d$. A real number $c$ is called a {\it weakly regular value} of $f$ if whenever $x_i \to x$ as $i\to\infty$, with $f(x_i )> f(x) = c$ and $\xi_i \in \partial f(x_i)$, $i\in\en$, then $\xi_i\not \to 0$.
This is equivalent to the condition that for every $K\subset U$ compact there is an $\eps>0$ such that  the inequality $|v|\geq\eps$ holds for every $x\in K$ satisfying $c<f(x)<c+\eps$ and $v\in\partial f(x)$.

A compact set $A\subset\er^d$ is called {\it WDC} ({\it weakly delta-convex}) if there is a DC function $f:\er^d\to [0,\infty)$ with a weakly regular value $0$ such that $A=f^{-1}(\{0\})$.

Note that any compact set $A\subset\er^d$ with positive reach (see \cite{Fe59}) is $\WDC$. Indeed, the distance function $d_A(x)=\dist(x,A)$ is locally semiconvex on an open neighbourhood of $A$ (cf.\   \cite[Satz~(2.8)]{K81}) and locally semiconcave on $\er^d\setminus A$ and therefore DC, and $0$ is a weakly regular value of $d_A$ since $d_A$ has unit gradient at all points $x$ with $0<d_A(x)<\reach(A)$, see \cite[Theorem~4.8]{Fe59}. In particular, compact convex sets are $\WDC$.

We call a set $A\subset\er^d$ {\it locally WDC} if for every $x\in A$ there is $U_x$, an open neighborhood of $x$, and a $\WDC$ set $A_x$ such that $A\cap U_x=A_x\cap U_x.$

The following results about $\WDC$ were proven in \cite{PR13} and \cite{FPR15}.

\begin{thm}{\cite[Theorem~1.2]{PR13}}\label{T:pokornyRatajLocWDC}
	Any compact $\WDC$ set in $\er^d$ admits the normal cycle.
\end{thm}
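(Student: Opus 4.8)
The plan is to obtain $N_A$ as a limit of normal cycles of smooth approximating domains, with the delta-convex structure supplying the one a priori estimate that makes everything work. Fix a DC representation $f=g-h$ on $\er^d$ with $g,h$ convex, $f\ge0$ and $A=f^{-1}(\{0\})$. Replacing $f$ by $\max\{f,\kappa(|x|^2-R^2)_+\}=\max\{g,\kappa(|x|^2-R^2)_++h\}-h$ for a large ball $B=U(0,R)\supset A$ and a suitable $\kappa>0$ (still a nonnegative DC function, with convex parts, the same zero set and the same weakly regular value $0$), we may assume $f$ is coercive and that $\{f\le\eps\}\subset B$ for all small $\eps$. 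By weak regularity of the value $0$ there are $c>0$ and $\eps_0>0$ with $|\xi|\ge c$ whenever $x\in\overline B$, $0<f(x)<\eps_0$ and $\xi\in\partial f(x)$.

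First I would smooth $g,h$ by their Moreau--Yosida regularizations $g_\lambda,h_\lambda$ (infimal convolution with $|\cdot|^2/(2\lambda)$): these are convex, of class $\C^{1,1}$, converge locally uniformly to $g,h$, have gradients bounded on $\overline B$ uniformly in $\lambda$, and — being convex — satisfy $\int_B\|D^2g_\lambda\|\le C_d\int_B\Delta g_\lambda=C_d\int_{\bd B}\nabla g_\lambda\cdot\nu\,d\H^{d-1}\le C$ uniformly in $\lambda$, and likewise for $h_\lambda$. Hence $f_\lambda:=g_\lambda-h_\lambda\in\C^{1,1}$, $f_\lambda\to f$, and $|D^2f_\lambda|(B)$ is bounded uniformly in $\lambda$. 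For a.e.\ $\eps\in(0,\eps_0)$ and all small $\lambda$ the set $M_{\lambda,\eps}:=\{x\in\overline B:f_\lambda(x)\le\eps\}$ is a compact domain with $\C^{1,1}$ boundary on which $|\nabla f_\lambda|\ge c/2$; such a domain has positive reach, so by the classical theory of Federer and Z\"ahle (\cite{Fe59},\cite{Z86}) it carries a normal cycle $N_{M_{\lambda,\eps}}$, given explicitly by integration over $\bd M_{\lambda,\eps}$ against the Lipschitz--Killing forms built from its second fundamental form.

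The main obstacle is the uniform mass bound $\bM(N_{M_{\lambda,\eps}})\le C$ as $\lambda\to0$ and $\eps$ small. Estimating the principal curvatures of $\bd M_{\lambda,\eps}$ by $\|D^2f_\lambda\|/c$ and applying the coarea formula in $\eps$ reduces this to bounding $\int_B\|D^2f_\lambda\|^m$ for $1\le m\le d-1$; the $m=1$ term is controlled by the previous paragraph (this already settles $d=2$), but for $m\ge2$ these integrals blow up because $D^2f_\lambda$ concentrates along the Alexandrov singular sets of $g$ and $h$, so a different idea is needed in higher dimensions. The way around this is not a curvature estimate but the monotone structure: by Minty's theorem the graph of $\partial g$ is the bi-Lipschitz image of $\er^d$ under $z\mapsto\big((I+\partial g)^{-1}z,\;z-(I+\partial g)^{-1}z\big)$, hence an $\H^d$-rectifiable $d$-manifold of finite measure over bounded regions with bounds \emph{independent} of $g$; the analogous statement holds for $\partial h$, and the Gauss-map data of $\bd M_{\lambda,\eps}$ is encoded in these two Lipschitz manifolds. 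Organizing the normal cycle as a suitable fibre product of $\graph\partial g_\lambda$ and $\graph\partial h_\lambda$, in the spirit of the construction of curvature currents from monotone maps (cf.\ \cite{Fu94}), and using $|\nabla f_\lambda|\ge c/2$ for the requisite transversality, yields $\bM(N_{M_{\lambda,\eps}})\le C$ with $C$ depending only on $c$, $R$, $|D^2g|(B)$, $|D^2h|(B)$ — uniformly in $\lambda$ and small $\eps$. (Equivalently one may bypass the smoothing and build $N_A$ itself directly as the $\{f=0\}$-slice of such a fibre product of $\graph\partial g$ and $\graph\partial h$, the slice being well defined precisely because $0$ is a weakly regular value.) Granting the bound, the Federer--Fleming compactness theorem \cite{Fe69} produces, along a subsequence $\lambda_j\to0$ and then $\eps_k\to0$, a limit $T\in{\bf I}_{d-1}(\er^d\times\sphd)$ with $\bM(T)<\infty$ and $\spt T\subseteq A\times\sphd$.

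It remains to identify $T$ with $N_A$. Since $\bd T=0$ and $T\llc\alpha=0$ are preserved under mass-bounded weak convergence, $T$ is a Legendrian cycle. From $\bM(T)<\infty$ one gets \eqref{eq:NormalCycleTouching} by a slicing argument: the projection $(x,n)\mapsto x\cdot v$ restricted to $\spt T$ carries only finite mass, so for each $v$ only an $\H^1$-null set of levels $t$ can be touched. For \eqref{eq:NormalCycleEuler}, Morse theory for positive-reach domains gives $\langle N_{M_{\lambda,\eps}},\pi_1,-v\rangle(H_{v,t}\times\sphd)=\chi(M_{\lambda,\eps}\cap H_{v,t})$ for $\H^d$-a.e.\ $(v,t)$; letting $\lambda\to0$ (the slice $M_{\lambda,\eps}\cap H_{v,t}$ converges in Hausdorff distance to $\{f\le\eps\}\cap H_{v,t}$, with a uniform positive-reach-type bound near its boundary coming from $|\nabla f_\lambda|\ge c/2$, which keeps $\chi$ stable for a.e.\ $(v,t)$) and then $\eps\to0$ (weak regularity of $0$ lets one deformation-retract $\{f\le\eps\}$ onto $A$ along a smoothed descent flow, and a generic $H_{v,t}$ is transverse to the retraction), the right-hand sides converge to $\chi(A\cap H_{v,t})$, while the left-hand sides converge to $\langle T,\pi_1,-v\rangle(H_{v,t}\times\sphd)$ by continuity of slicing under our convergence. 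Thus $T$ satisfies \eqref{eq:NormalCycleEuler}, and the uniqueness recorded in Definition~\ref{D-NC} gives $T=N_A$.
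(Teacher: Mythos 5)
First, a point of order: the paper does not prove this statement at all --- Theorem~\ref{T:pokornyRatajLocWDC} is quoted verbatim from \cite[Theorem~1.2]{PR13} and used as a black box --- so the only meaningful comparison is with the proof in that reference. Your overall scheme (regularize the convex parts, take normal cycles of the resulting $\C^{1,1}$ sublevel domains, prove a uniform mass bound, apply Federer--Fleming compactness, and identify the limit via the Euler-characteristic slicing condition) is a sensible plan, and your parenthetical alternative --- obtaining $N_A$ by slicing a current living over the subdifferential data at the weakly regular value $0$ --- is in fact close in spirit to what \cite{PR13} actually does, via Fu's theory of Monge--Amp\`ere functions and the differential cycle $D(f)$ of $f$.

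The genuine gap is that the single step carrying all the difficulty, the uniform bound $\bM(N_{M_{\lambda,\eps}})\le C$ for $d\ge 3$, is asserted rather than proved, and the mechanism you propose for it fails as stated. The fibre product of $\graph\partial g_\lambda$ and $\graph\partial h_\lambda$ over the common base point is not a $d$-dimensional object of controlled mass: its area element contains mixed Hessian terms of the type $\sigma_j(D^2g_\lambda)\,\sigma_k(D^2h_\lambda)$ with $j,k\ge1$, whose integrals are not controlled by $|D^2g|(B)$ and $|D^2h|(B)$ separately and blow up as $\lambda\to0$ whenever the singular sets of $g$ and $h$ overlap. Concretely, for $g=N\phi(x_1)+|x|^2$ and $h=N\phi(x_1)$ with $\phi$ a mollified absolute value, the fibre of $(\partial g,\partial h)$ over $\{x_1=0\}$ has measure of order $N^2$ while $|D^2g|(B)$ and $|D^2h|(B)$ are of order $N$ (and $N_A$ here is just $\{0\}\times\sphd$); in the limit the fibre over a common kink can even have dimension up to $2d$. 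The combined Minty trick does not rescue this: $(I+\partial g+\partial h)^{-1}$ is $1$-Lipschitz, but increments of $\partial g$ and $\partial h$ can cancel in $\partial(g+h)$, so the pair $(\partial g,\partial h)$ is not Lipschitz-parametrized by it. The real content of \cite{PR13} is precisely that the correct object is the differential cycle supported on $\graph\partial f$ itself --- where these cancellations have already taken place --- and proving that this integer rectifiable $d$-current exists, has locally finite mass and is unique is the hard part of the theorem; it does not reduce to separate estimates on $g$ and $h$ in the way your sketch suggests. (Your $m=1$ estimate does handle $d=2$, but the statement is for all $d$.) A secondary gap: Hausdorff convergence of $M_{\lambda,\eps}\cap H_{v,t}$ does not by itself give convergence of Euler characteristics, and the ``uniform positive-reach-type bound'' you invoke degenerates at the corner where $\{f_\lambda=\eps\}$ meets $\partial H_{v,t}$, so an additional genericity argument in $(v,t)$ is needed there as well.
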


Note that there are sets in $\er^d$ admitting the normal cycle that are not $\WDC$. A simple example of such set is $B((-1,0),1)\cup B((1,0),1)\subset\er^2$, which is clearly a $\UPR$ set, but cannot be $\WDC$ by \cite[Lemma~7.8]{PRZ18}.\color{black}

%
%\begin{definition}{\cite[Definition~8.2]{PR13}}\label{def:vitaliSystem}
%	A family $\mathcal V$ of compact subsets of $\er^d$ is a {\it Vitali system} if for any $x\in \er^d$ and $\delta>0$ there exists $K\in\mathcal V$ such that $x\in\intr K$ and $K\subset B(x,\delta)$. If $f$ is a function defined on $\mathcal V$ and $a\in\er$, we write 
%	$$\lim_{{\mathcal V}\ni K\to x}f(K)=a$$ 
%	if for any $\eps>0$ there exists $\delta>0$ such that $|f(K)-a|<\eps$ whenever $x\in \intr K$ and $K\subset B(x,\delta)$.
%\end{definition}
%
%\begin{thm}{\cite[Theorem~8.7]{PR13}}\label{T:indexFunctionFormula}
%	If $A$ is a compact $\WDC$ subset of $\er^d$. 
%	Then its normal cycle has an index function $\iota_{N_A}=:\iota_A$ fulfilling
%	for $\H^{d-1}$-almost all $n\in\sphd$ and all $x\in\er^d$:
%	\begin{equation}\label{eq:localIndex}
%	\iota_{A}(x,n)=\lim_{{\mathcal V}_f\ni K\to x}\esslim\limits_{\delta\to 0_+}
%	\left(\chi(A\cap K\cap H_{-n,-t+\delta})-\chi(A\cap K\cap H_{-n,-t-\delta})\right).
%	\end{equation}
%\end{thm}
%
%Theorem~\ref{T:indexFunctionFormula} and the additivity of the Euler-Poincar\'e characteristic immediately imply that t
The normal cycles of the $\WDC$ sets also have the additivity property, i.e. 
\begin{equation}
N_A+N_B=N_{A\cap B}+N_{A\cup B}
\end{equation}
whenever $A$, $B$, $A\cup B$ and $A\cap B$ are all $\WDC$ (cf. \cite[Theorem~8.8]{PR13}).

The corresponding curvature measures also satisfy other classical formulas of the integral geometry such as the Crofton formula (\cite[Theorem~1.3]{PR13}) and the kinematic formula (\cite[Theorem~B]{FPR15}.
In this paper we will only need the following special case of the kinematic formula which one obtains by applying \cite[(1.2)]{FPR15} for $(M,G)=(\er^d,\overline{SO_d})$, $\beta_i$ being the Lipschitz-Killing forms $\varphi_i$ and $\phi$ and $\psi$ being the characteristic functions of $U$ and $V$, respectively:

\begin{thm}[Kinematic formula for $\WDC$ sets]\label{thm:kinematicForWDC}
	Let $A$ and $B$ be two compact $\WDC$ sets in $\er^d$ and let $0\leq k\leq d-1$.
	Then $A\cap g(B)\in\UWDC$ for almost every $g\in\G_d$ and
	\begin{equation}\label{eq:kinematicFormulaFroWDC}
	\int_{{\cal G}_d}C_k(A\cap g (B),U\cap g (V))\, dg=\sum_{i+j=d+k}\gamma_{d,i,j}C_i(A,U)C_j(B,V),
	\end{equation}
	where $\gamma_{d,i,j}$ are constants depending only on $d$, $i$ and $j$.
\end{thm}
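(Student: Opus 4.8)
# Proof proposal for the Kinematic Formula for $\WDC$ sets

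The plan is to derive this statement as a direct specialization of the general kinematic formula for curvature currents of $\WDC$ sets proved in \cite[Theorem~B]{FPR15}, together with the additivity and the basic intersection stability already recorded above. The first step is to verify the measurability/integrability prerequisites: for almost every $g\in\G_d$ the intersection $A\cap g(B)$ must be a set for which the curvature measures are defined. Here I would invoke the fact that $\WDC$ sets are closed under intersection by almost every Euclidean motion — this is exactly the transversality statement contained in \cite{FPR15} (and is the ``$A\cap g(B)\in\UWDC$ for almost every $g$'' assertion, which for two $\WDC$ sets actually gives a $\WDC$, hence $\UWDC$, intersection). Concretely, if $A=f^{-1}(\{0\})$ and $B=h^{-1}(\{0\})$ with $f,h$ DC and $0$ weakly regular, then $A\cap g(B)=(f^2+(h\circ g^{-1})^2)^{-1}(\{0\})$, and the weak regularity of $0$ for the sum holds for a.e.\ $g$ by a Sard-type argument on the DC functions involved; this is the content one imports rather than reproves.

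Next, with $A\cap g(B)$ admitting a normal cycle for a.e.\ $g$, the curvature measures $C_k(A\cap g(B),\cdot)$ are well defined via $N_{A\cap g(B)}(\varphi_k\llc(\cdot\times\er^d))$. I would then apply the principal kinematic formula \cite[(1.2)]{FPR15} in the special case $(M,G)=(\er^d,\overline{\SOd})$, taking the Gelfand–LerayLipschitz–Killing forms $\beta_i=\varphi_i$ and the test functions $\phi=\ind{U}$, $\psi=\ind{V}$. The left-hand side of that general identity integrates $\int\langle N_{A\cap g(B)},\varphi_k\rangle$ weighted by $\phi\cdot(\psi\circ g^{-1})$ over $\G_d$, which is precisely $\int_{\G_d}C_k(A\cap g(B),U\cap g(V))\,dg$; the right-hand side is the bilinear expression $\sum_{i+j=d+k}\gamma_{d,i,j}\,C_i(A,U)\,C_j(B,V)$ with universal constants $\gamma_{d,i,j}$ depending only on $d,i,j$ (these are the classical kinematic constants, and their identification is done in \cite{FPR15}). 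The only care needed is to match normalizations: the constant $\cO_{d-k-1}^{-1}$ built into $\varphi_k$ and the Haar normalization of $\gamma_d$ fixed in Section~\ref{sec:preliminaries} must be consistent with the conventions of \cite{FPR15}, and any discrepancy is absorbed into $\gamma_{d,i,j}$.

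The step I expect to be the main (really the only) obstacle is purely bookkeeping: translating the statement of \cite[Theorem~B]{FPR15}, which is phrased for a general ambient space $M$ and a general group $G$ acting on it with integral-geometric forms $\beta_i$, into the flat Euclidean setting with the specific Lipschitz–Killing forms and indicator test functions, and checking that the resulting integrand is exactly $C_k(A\cap g(B),U\cap g(V))$. This requires unwinding the definition of the curvature measures in terms of the normal cycle paired with $\varphi_k$ and confirming that restriction of the current to $(U\cap g(V))\times\er^d$ corresponds to the product $\phi\cdot(\psi\circ g^{-1})$ appearing in the general formula. Once that dictionary is in place, the identity \eqref{eq:kinematicFormulaFroWDC} follows immediately, and no genuinely new estimate is needed beyond what is already in \cite{FPR15}.
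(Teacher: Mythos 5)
Your proposal matches the paper's treatment exactly: the paper offers no proof of this theorem, merely observing that it is the special case of \cite[(1.2)]{FPR15} obtained by taking $(M,G)=(\er^d,\overline{SO_d})$, $\beta_i=\varphi_i$ and $\phi,\psi$ the characteristic functions of $U$ and $V$, with the a.e.\ transversality of $A\cap g(B)$ likewise imported from \cite[Proposition~4.1]{FPR15} (recorded in the paper as Proposition~\ref{P:intersectingBalls}). One small caveat: your parenthetical aura $f^2+(h\circ g^{-1})^2$ does \emph{not} have $0$ as a weakly regular value, since its Clarke subgradients degenerate like $2f\,\partial f+2(h\circ g^{-1})\,\partial(h\circ g^{-1})\to 0$ as one approaches the zero set --- the correct choice is the sum $f+h\circ g^{-1}$ of the two nonnegative auras --- but as you explicitly defer this point to \cite{FPR15} it does not affect the argument.
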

\color{black}

\color{black}
\begin{prop}\label{P:intersectingBalls}
	Let $\cM$ and $\K$ be finite nonempty collections of $\WDC$ sets in $\er^d$. Then $\{M\cap g(K):M\in\cM,\; K\in\K \}$ is again a collection of $\WDC$ sets for almost every $g\in\G_d$. 
	
	In particular, if $r>0$ then $\{M\cap B(x,r),M\in\cM\}$ a collection of $\WDC$ sets for almost every $x\in\er^d$.
\end{prop}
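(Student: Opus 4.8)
The strategy is to reduce everything to the two–set case already settled by Theorem~\ref{thm:kinematicForWDC} and then to use that a finite union of null sets is null. Write $\cM=\{M_1,\dots,M_p\}$ and $\K=\{K_1,\dots,K_q\}$. For each pair $(i,j)$, Theorem~\ref{thm:kinematicForWDC} applied with $A=M_i$ and $B=K_j$ asserts that $M_i\cap g(K_j)$ is $\WDC$ for $\gamma_d$-almost every $g\in\G_d$; that is, there is a $\gamma_d$-null set $Z_{i,j}\subseteq\G_d$ such that the compact set $M_i\cap g(K_j)$ is $\WDC$ for every $g\in\G_d\setminus Z_{i,j}$. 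Since $\cM$ and $\K$ are finite, $Z:=\bigcup_{i,j}Z_{i,j}$ is still $\gamma_d$-null, and for every $g\notin Z$ all the sets $M_i\cap g(K_j)$, $1\le i\le p$, $1\le j\le q$, are $\WDC$ simultaneously. This is exactly the first assertion.

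For the ``in particular'' part I would apply the first assertion to the same $\cM$ and to the singleton collection $\K=\{B(0,r)\}$, which is admissible since $B(0,r)$ is compact and convex, hence $\WDC$. Every Euclidean motion maps $B(0,r)$ onto the closed ball of radius $r$ centred at the image of the origin, so writing $g(y)=\rho(y)+x$ with $x=g(0)$ and $\rho$ the linear part one has $M_i\cap g(B(0,r))=M_i\cap B(x,r)$; in particular this set depends on $g$ only through the translation $x$. By the first assertion there is a $\gamma_d$-null set off which all the $M_i\cap g(B(0,r))$ are $\WDC$; enlarging it, we may take it to be a Borel $\gamma_d$-null set $Z_0$. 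Under the identification $g\leftrightarrow(\rho,x)$ the measure $\gamma_d$ corresponds, up to a positive constant, to the product of the finite positive Haar measure on the compact group of linear parts and Lebesgue measure $\lambda^d$ on the translations, so Fubini's theorem yields some linear part $\rho_0$ for which the slice $Z_0^{\rho_0}:=\{x\in\er^d:(\rho_0,x)\in Z_0\}$ is $\lambda^d$-null. For $x\notin Z_0^{\rho_0}$ the motion $g(y)=\rho_0(y)+x$ lies outside $Z_0$, hence $M_i\cap B(x,r)=M_i\cap g(B(0,r))$ is $\WDC$ for every $i$; therefore $\{M\cap B(x,r):M\in\cM\}$ consists of $\WDC$ sets for $\lambda^d$-a.e.\ $x\in\er^d$.

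All the real content is carried by Theorem~\ref{thm:kinematicForWDC}; what remains is routine bookkeeping, so I do not anticipate a genuine obstacle. The two points deserving a moment's attention are the passage from ``good for a.e.\ $g$ and each fixed pair'' to ``good for a.e.\ $g$ and all pairs at once'', which is immediate from the finiteness of $\cM$ and $\K$, and the Fubini step in the ball case, where the key observation is that the exceptional set of motions detects only the translation part of $g$ (so that it is of product form $(\text{linear parts})\times E$ with $E\subseteq\er^d$ Lebesgue null).
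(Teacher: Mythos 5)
Your proof is correct and follows essentially the same route as the paper: reduce to the two-set case, take a finite union of null sets, and handle the ball statement via the product structure of the Haar measure on $\G_d$ (the paper merely cites \cite[Proposition 4.1]{FPR15} directly for the two-set step, which is the same fact underlying the intersection clause of Theorem~\ref{thm:kinematicForWDC}; note that clause is misprinted as ``$\in\UWDC$'' in the paper, so you are implicitly reading it as $\WDC$, which is what \cite{FPR15} actually gives). Your Fubini argument for the ``in particular'' part is just a more explicit version of the paper's one-line appeal to the construction of the Haar measure.
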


\begin{proof}
  Pick $M\in\cM$ and $K\in\K$.
  By \cite[Proposition 4.1]{FPR15} we know that $M\cap g(K)$ is $\WDC$ for almost every $g\in \G_d$. Since both $\cM$ and $\K$ are finite (and so in particular countable), $M\cap g(K)$ is $\WDC$ for almost every euclidean motion $g$ on $\er^d$ and every $M\in\cM$ and $K\in\K$.
  
  The second part of the lemma %pick $r>0$ assume $\K=\{B(0,r)\}$ 
  follows directly form de construction of the Haar measure on $\G_d$.
\end{proof}
\color{black}
\color{black}
\subsection{Lipschitz and DC graphs}\label{subs:LipADCgRAPHS}
	A set $M\subset\er^d$ is called a Lipschitz (DC) graph in the direction $v\in\sph^{d-1}$ if there is a closed convex set $K\subset v^{\bot}$
	and a Lipschitz (DCR) function $f:K\to \spa (v)$ such that
	\begin{equation*}
	M=\{t+f(t):t\in K\}.
	\end{equation*}
	We will say that $M$ is an $L$-Lipschitz graph in the direction $v$, if the function $f$ in the definition above can be found $L$-Lipschitz.
	We also say that $M$ is a Lipschitz ($L$-Lipschitz) graph, if it is an 
	$L$-Lipschitz graph in the direction $v$ for some $v\in\sph^{d-1}$.
	
	The following results will be useful.
	
	\begin{lem}{\cite[Lemma~7.3]{PRZ18}}\label{L:zestr}
		Let $P$ be a DC graph in $\R^2$ and $0 \in P$.
		Suppose that $\Tan(P,0)$ is a $1$-dimensional space and $(0,1) \notin \Tan(P,0)$. Then there exists 
		$\rho^*>0$ such that, for each $0< \rho< \rho^*$, there exist $\alpha<0 < \beta$ and a DCR function $f$ on $(\alpha, \beta)$ such that
		$P \cap B(0, \rho) = \graph  f|_{(\alpha, \beta)}$.
	\end{lem}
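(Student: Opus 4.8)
The plan is to realise $P$ near the origin as a single reparametrised arc and to reproject this arc onto the horizontal axis. Since $P$ is a DC graph, there are a closed interval $K\subset v^\bot$ (for some $v\in\sph^1$) and a DCR function $h\colon K\to\spa(v)$ with $P=\{t+h(t):t\in K\}$; because $0\in P$ and $v^\bot\perp\spa(v)$, the decomposition $0=t_0+h(t_0)$ forces $t_0=0=h(0)$, so $0\in K$. Fixing an orthonormal basis $e_1\in v^\bot$, $e_2=v$, I would identify $K$ with an interval $I\ni0$ in $\R$ and $h$ with a DCR function $g\colon I\to\R$ with $g(0)=0$, and work with the Lipschitz parametrisation $\gamma\colon I\to\R^2$, $\gamma(\tau)=\tau e_1+g(\tau)e_2$, which is a bijection onto $P$ with $\gamma(0)=0$. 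Writing $\gamma=(\Phi,\Psi)$ in the standard coordinates of $\R^2$, both $\Phi$ and $\Psi$ are DCR. Since a DCR function of one variable has one-sided derivatives everywhere, a short computation gives $\Tan(P,0)=\{\sigma(e_1+g'_+(0)e_2):\sigma\ge0\}\cup\{\sigma(e_1+g'_-(0)e_2):\sigma\le0\}$ when $0\in\INt I$, and a half-line when $0$ is an endpoint of $I$; so the hypothesis that $\Tan(P,0)$ is a line forces $0\in\INt I$ and $g'_+(0)=g'_-(0)=:m_0$. Hence $g$ is differentiable at $0$, $\Tan(P,0)=\spa\{\gamma'(0)\}$ with $\gamma'(0)=e_1+m_0e_2$, and $(0,1)\notin\Tan(P,0)$ says exactly that $\gamma'(0)$ is not vertical, i.e.\ $\Phi'(0)\ne0$; after possibly replacing $\Phi$ by $-\Phi$ I may assume $\Phi'(0)=c>0$.

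The key step is that, although $\Phi$ is a priori only DCR and so need not be monotone, it is in fact increasing and bi-Lipschitz near $0$. For this I would invoke the one-dimensional structure of DC functions: $\Phi'$ has bounded variation near $0$, hence has one-sided limits at $0$, and since the genuine derivative $\Phi'(0)=c$ exists these limits both equal $c$; therefore $\Phi'\in[c/2,2c]$ a.e.\ on a suitable interval $(-\delta,\delta)\subset I$, and integrating yields $\tfrac{c}{2}(\tau_2-\tau_1)\le\Phi(\tau_2)-\Phi(\tau_1)\le2c(\tau_2-\tau_1)$ for $-\delta<\tau_1<\tau_2<\delta$. Thus $\Phi$ carries $(-\delta,\delta)$ increasingly and bi-Lipschitzly onto an open interval $(p_-,p_+)\ni0$. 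Its inverse is then DC on $(p_-,p_+)$: $\Phi^{-1}$ is Lipschitz and $(\Phi^{-1})'=1/(\Phi'\circ\Phi^{-1})$ is again of bounded variation (the bounded BV function $1/\Phi'$ composed with the monotone continuous $\Phi^{-1}$), and a one-variable Lipschitz function with BV derivative is DC. I would then put $f:=\Psi\circ\Phi^{-1}$, which is DC on $(p_-,p_+)$ since the composition of DC mappings is DC, hence DCR on any compact subinterval. Because $\gamma(\Phi^{-1}(s))=(s,f(s))$ for $s\in(p_-,p_+)$, this identifies $\gamma((-\delta,\delta))=\graph f|_{(p_-,p_+)}$, so $P$ coincides near $0$ with the graph of a DCR function over the horizontal axis.

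It remains to cut this down to balls. From $|\gamma(\tau)|^2=\tau^2+g(\tau)^2\ge\tau^2$ one gets, for every $\rho\le\delta$, that $P\cap B(0,\rho)=\graph f|_{J_\rho}$ with $J_\rho=\{s\in(p_-,p_+):s^2+f(s)^2\le\rho^2\}$, so I only need $J_\rho$ to be an interval around $0$ for all small $\rho$. The function $s\mapsto s^2+f(s)^2$ is DC and vanishes at $0$; since $\Tan(\graph f,0)=\Tan(P,0)$ is a non-vertical line, $f$ is differentiable at $0$ with $f'(0)=m$ finite, so $f(s)=ms+o(s)$ and $\lim_{s\to0}f'(s)=m$, whence the a.e.\ derivative of $s^2+f(s)^2$ equals $2s+2f(s)f'(s)=2(1+m^2)s+o(s)$, which is positive for small $s>0$ and negative for small $s<0$. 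Hence $s^2+f(s)^2$ is strictly increasing on $(0,\delta')$ and strictly decreasing on $(-\delta',0)$ for some $0<\delta'\le\delta$ with $[-\delta',\delta']\subset(p_-,p_+)$, and for every $\rho<\rho^*:=\delta'$ the set $J_\rho$ is an interval $(\alpha,\beta)$ with $\alpha<0<\beta$; restricting $f$ to $(\alpha,\beta)$ finishes the argument. (With the convention that $B(0,\rho)$ is the closed ball one gets the closed interval $[\alpha,\beta]$; the open-ball reading of the statement is the one that comes out cleanly.)

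I expect the only real obstacle to be the middle step — upgrading the DC function $\Phi$ to a bi-Lipschitz increasing homeomorphism near $0$ and then checking that the inverse of a bi-Lipschitz one-variable DC function is again DC. Both facts rest on the structure theorem that the derivative of a one-dimensional DC function has bounded variation, together with the differentiability $\Phi'(0)\ne0$ extracted from the tangent-cone hypothesis. The remaining ingredients — the tangent-cone formula for a Lipschitz graph and the local monotonicity of $s\mapsto s^2+f(s)^2$ — are routine.
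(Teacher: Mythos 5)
The paper does not prove this lemma; it is imported verbatim from \cite[Lemma~7.3]{PRZ18}, so there is no internal proof to compare yours against, and I can only judge the argument on its own terms. It holds up. Your route — extracting from the tangent-cone hypothesis that $0$ is interior to the parameter interval and that the first coordinate $\Phi$ of the natural DC parametrisation satisfies $\Phi'(0)\neq 0$; upgrading this to a bi-Lipschitz increasing homeomorphism near $0$ via the one-sided continuity of the one-sided derivatives of convex functions; verifying that $\Phi^{-1}$ is DC because a one-variable Lipschitz function whose derivative agrees a.e.\ with a BV function is a difference of convex functions; and setting $f=\Psi\circ\Phi^{-1}$ — is correct, and the final localisation via the strict monotonicity of $s\mapsto s^2+f(s)^2$ on either side of $0$ (which also guarantees $\alpha<0<\beta$) is the right way to pass from ``locally a graph'' to the statement about balls. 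Two remarks. First, the only place where the hypothesis that $\Tan(P,0)$ is a full line (rather than a ray) is used is to force $0\in\INt K$ and $g'_+(0)=g'_-(0)$; your tangent-cone formula handles this correctly, and it is essential, since otherwise one cannot get an interval straddling $0$. Second, your parenthetical about the ball is right: with this paper's convention that $B(0,\rho)$ denotes the closed ball, the intersection $P\cap B(0,\rho)$ is the graph of $f$ over a closed interval $[\alpha,\beta]$, so the open-interval formulation in the quoted statement is a slight infelicity of the citation, not a defect of your proof.
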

	
	\begin{lem}\label{L:LipschitzGraphEquivalence}
		Suppose that $M\subset\er^d$, $v\in\sph^{d-1}$ and $L>0$.
		Then the following conditions are equivalent:
		\begin{enumerate}[label={ (\alph*)}]
			\item\label{cond:LipschitzGraph} $M$ is an $L$-Lipschitz graph in the direction $v$,
			\item\label{cond:ProjectionsAndProducts} $\Pi_{v}(M)$ is convex and
			\begin{equation*}\label{eq:inequalityInConditionB}
			\left|(A-B)\cdot v\right|\leq \frac{L}{\sqrt{1+L^{2}}}\;|A-B|
			\end{equation*}
			for every $A,B\in M$.
		\end{enumerate}
	\end{lem}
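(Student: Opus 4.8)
The plan is to replace the constant $\frac{L}{\sqrt{1+L^{2}}}$ appearing in \ref{cond:ProjectionsAndProducts} by a geometrically transparent condition and then treat both implications as bookkeeping. First I would record the following elementary observation: for any $A,B\in\er^{d}$ write the orthogonal decomposition $A-B=u+sv$, where $u:=\Pi_{v}(A)-\Pi_{v}(B)\in v^{\bot}$ (recall that $\Pi_{v}$ is the orthogonal projection onto $v^{\bot}$) and $s:=(A-B)\cdot v\in\er$; then $|A-B|^{2}=|u|^{2}+s^{2}$, and squaring and rearranging (using $|u|^{2}=|A-B|^{2}-s^{2}$) gives
\begin{equation*}
|s|\leq\frac{L}{\sqrt{1+L^{2}}}\,|A-B|\quad\Longleftrightarrow\quad|s|\leq L\,|u|.
\end{equation*}
Consequently \ref{cond:ProjectionsAndProducts} is equivalent to the statement that $\Pi_{v}(M)$ is convex and $|(A-B)\cdot v|\leq L\,|\Pi_{v}(A)-\Pi_{v}(B)|$ for all $A,B\in M$, and I would carry out the rest of the argument with this reformulation.

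For the implication \ref{cond:LipschitzGraph}$\Rightarrow$\ref{cond:ProjectionsAndProducts}: if $M=\{t+f(t):t\in K\}$ with $K\subset v^{\bot}$ closed convex and $f\colon K\to\spa(v)$ an $L$-Lipschitz map, then $\Pi_{v}(M)=K$ is convex (since $\Pi_{v}$ is the identity on $v^{\bot}$ and annihilates $\spa(v)$), and for $A=a+f(a)$ and $B=b+f(b)$ one has $\Pi_{v}(A)-\Pi_{v}(B)=a-b$ and $(A-B)\cdot v=(f(a)-f(b))\cdot v$, so $|(A-B)\cdot v|=|f(a)-f(b)|\leq L|a-b|$, which is exactly the reformulated inequality. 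Conversely, assuming that reformulation I would set $K:=\Pi_{v}(M)$ (convex by hypothesis) and first note that $\Pi_{v}|_{M}$ is injective: if $\Pi_{v}(A)=\Pi_{v}(B)$ then $(A-B)\cdot v=0$, hence $A-B=\big(\Pi_{v}(A)-\Pi_{v}(B)\big)+\big((A-B)\cdot v\big)v=0$. Thus every $p\in M$ is uniquely of the form $p=t+f(t)$ with $t=\Pi_{v}(p)\in K$ and $f(t):=(p\cdot v)\,v\in\spa(v)$, so $M=\{t+f(t):t\in K\}$; and applying the inequality to $p,q\in M$ yields $|f(\Pi_{v}p)-f(\Pi_{v}q)|=|(p-q)\cdot v|\leq L\,|\Pi_{v}p-\Pi_{v}q|$, i.e.\ $f$ is $L$-Lipschitz.

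The only delicate point — and where I expect the (very mild) obstacle to be — is checking that the base $K=\Pi_{v}(M)$ obtained in the direction \ref{cond:ProjectionsAndProducts}$\Rightarrow$\ref{cond:LipschitzGraph} is \emph{closed}, as the definition of a Lipschitz graph demands. This is automatic whenever $M$ is closed, which is the situation in every application of the lemma: if $t_{n}\in K$ with $t_{n}\to t$, then the points $t_{n}+f(t_{n})\in M$ form a Cauchy sequence (because $|f(t_{n})-f(t_{m})|\leq L|t_{n}-t_{m}|$), hence converge to some $p\in M$, and then $t=\Pi_{v}(p)\in K$. With this remark in place the proof is complete; all the remaining verifications are immediate from the reformulation recorded above.
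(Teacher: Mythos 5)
Your proof is correct and follows essentially the same route as the paper's: both reduce the inequality in \ref{cond:ProjectionsAndProducts} to the $L$-Lipschitz condition on $f$ via the orthogonal decomposition $|A-B|^{2}=|\Pi_{v}(A)-\Pi_{v}(B)|^{2}+|(A-B)\cdot v|^{2}$ and the same squaring-and-rearranging computation, and both obtain injectivity of $\Pi_{v}|_{M}$ from that inequality. Your closing remark about the closedness of $K=\Pi_{v}(M)$ is a legitimate (minor) point that the paper's proof silently skips, and your resolution of it for closed $M$ is correct.
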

\begin{proof}
	Suppose that $v\in \sph^{d-1}$, $K\subset v^{\bot}$ and $f:K\to\spa(v)$.
	Then
	\begin{equation}\label{eq:seriesOfEquivalencies}
	\sqrt{1+L^{2}}\;\left|(A-B)\cdot v\right|\leq L|A-B|\iff |f(s)-f(t)|\leq L|s-t|,
	\end{equation}
	whenever $A=s+f(s)$ and $B=t+f(t)$, $s,t\in K$.
	Indeed, the definition of $f$ implies
	\begin{equation*}
	|(A-B)\cdot v|=|[s-t+f(s)-f(t)]\cdot v|=|(f(s)-f(t))\cdot v|=|f(s)-f(t)|
	\end{equation*}
	and 
	\begin{equation*}
	|A-B|^2=|f(s)-f(t)|^2+|s-t|^2.
	\end{equation*}
	So we can write the following series of equivalences.
	\begin{equation}
	\begin{aligned}
	|f(s)-f(t)|\leq L|s-t|
	&\iff |f(s)-f(t)|^2\leq L^2|s-t|^2\\
	&\iff |f(s)-f(t)|^2\leq L^2(|A-B|^2-|f(s)-f(t)|^2)\\
	&\iff (L^2+1)|f(s)-f(t)|^2\leq  L^2|A-B|^2\\
	&\iff (L^2+1)|(A-B)\cdot v|^2\leq L^2|A-B|^2\\
	&\iff \sqrt{L^2+1}|(A-B)\cdot v|\leq L|A-B|.
	\end{aligned}
	\end{equation}
	Which is what we want.
	
	Now, to prove the implication 
	$\ref{cond:LipschitzGraph}\implies\ref{cond:ProjectionsAndProducts}$,
	suppose that $M$ is an $L$-Lipschitz graph with corresponding $v\in\sph^{d-1}$, $K\subset v^{\bot}$ and $f:K\to\spa(v)$.
	Clearly $\Pi_{v}(M)$ is convex since $\Pi_{v}(M)=K$ and $K$ is convex.
	Moreover, the $L$-Lipschitzness of $f$ and 
	\eqref{eq:seriesOfEquivalencies} imply \ref{eq:inequalityInConditionB}, which concludes the proof on the implication.
	
	To prove the opposite implication assume \ref{cond:ProjectionsAndProducts} 
	and put $K=\Pi_{v}(M)$.
	By \ref{eq:inequalityInConditionB} we, in particular, know that $\Pi_v$ is injective on $M$.
	Indeed, it $\Pi_v$ was not injective on $M$, there would be $A,B\in M$ and $\alpha\in\er$ such that $B=A+\alpha v$.
	Then \ref{eq:inequalityInConditionB} implies
	\begin{equation*}
	|\alpha|= |(A-B)\cdot v|\leq \frac{L}{\sqrt{1+L^{2}}}\;|A-B| <|A-B|=|\alpha|,
	\end{equation*}
	which in not possible.
	
	Therefore we can define $f:K\to\spa(v)$ by $f(s)=\Pi_{v}^{-1}(s)\cap M-s$.
	Now, if $A,B\in M$ then there are $s,t\in K$ such that $A=s+f(s)$ and $B=t+f(t)$ and so we can again use \eqref{eq:seriesOfEquivalencies} to obtain that $f$ is $L$-Lipschitz and so $M$ is a Lipschitz graph.
\end{proof}
\color{black}
\begin{rem}\label{rem:DCgraphProperties}
Suppose that $P$ is a DC graph in $\er^2$ with $K=v^\bot$. Then the following is true (see \cite[Remark~7.1]{PRZ18} for the proof, note that Lipschitzness of $f$ is not needed in the proof): for $a=c + \vf(c) \in P$ 	there exist DC graphs $P_1, P_2 \subset \R^2$ such that $P \subset P_1 \cup P_2$, $a \in P_1\cap P_2 $
and $\Tan(P_i,a)$ is a $1$-dimensional space, $i=1,2$.
%\begin{enumerate}[label={ (\alph*)}]
%	\item\label{cond:TanTwoPointSet}
%	$\Tan(P,a) \cap S^1  $ is a two point set,
%	\item\label{cond:TanOneDim}
%	$\Tan(P,a)$ is a $1$-dimensional space iff $f'(c)$ exists, and
%	\item\label{cond:symmetrization}
%	there exist DC graphs $P_1, P_2 \subset \R^2$ such that $P \subset P_1 \cup P_2$, $a \in P_1\cap P_2 $
%	and $\Tan(P_i,a)$ is a $1$-dimensional space, $i=1,2$.
%\end{enumerate}	
\end{rem}

\subsubsection{Lipschitz domains}
\color{black}

Similarly to the definition of the Lipschitz graph we say that $\varnothing \neq A \subset \R^d$ is a Lipschitz domain if it can be locally represented as subgraph of a Lipschitz function (cf. \cite[Section~3.1]{PR13}, or \cite{RZ03}, where the term $d$-dimensional Lipschitz manifold was used).

We will need the following easy observation about Lipschitz domains:

\begin{lem}\label{lem:LipsechitzParts}
	Let $M$ be a nonempty compact Lipschitz domain in $\er^2$ then
	\begin{enumerate}[label={ (\alph*)}]
		\item\label{cond:componentsJordan} each connected component of $\partial M$ is a Jordan curve,
		\item\label{cond:JordanPartition} if $\gamma:[a,b]\to\er^2$ is a Jordan curve as in \ref{cond:componentsJordan} then
		there is a partition $\D\coloneqq\{a=t_0<\cdots <t_n=b\}$ such that the image of
		$\gamma$ restricted to $[t_i,t_{i+1}]$ is a Lipschitz graph for every $i=0,\dots,N-1$. 
	\end{enumerate}
\end{lem}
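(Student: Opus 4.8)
The plan is to establish \ref{cond:componentsJordan} first, using the fact that $\partial M$ is locally a Lipschitz graph. Fix $x\in\partial M$; by hypothesis there is a neighbourhood $U_x$, a direction $v_x\in\sphd$, an $L_x$-Lipschitz function $f_x$ on a relatively open convex subset of $v_x^\bot$, and an isometry placing $M\cap U_x$ as the subgraph of $f_x$. In particular $\partial M\cap U_x$ is the graph of $f_x$, hence homeomorphic to an interval. Since $\partial M$ is compact (closed in compact $M$) and locally homeomorphic to $\er$, every connected component $C$ of $\partial M$ is a compact connected $1$-manifold (no boundary, since every point has a neighbourhood in $C$ homeomorphic to an open interval), and the classification of compact $1$-manifolds forces $C$ to be homeomorphic to $\sph^1$, i.e.\ a Jordan curve. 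One should double-check that distinct local graphs do not create a ``branching'' point: at a point $x$ where two pieces of $\partial M$ meet, the subgraph picture shows a neighbourhood of $x$ in $\partial M$ is a single graph, so no branching occurs; this is the only mildly delicate point in part \ref{cond:componentsJordan}.

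For \ref{cond:JordanPartition}, fix a component $\gamma:[a,b]\to\er^2$ as in \ref{cond:componentsJordan}. For each $s\in[a,b]$ the point $\gamma(s)$ has a neighbourhood $U_{\gamma(s)}$ on which $\partial M$ is an $L$-Lipschitz graph in some direction $v$; choosing $\delta_s>0$ with $\gamma((s-\delta_s,s+\delta_s))\subset U_{\gamma(s)}$ (indices mod the period, or one-sided at the endpoints), the arc $\gamma([s',s''])$ for any $s-\delta_s\le s'\le s''\le s+\delta_s$ is contained in a Lipschitz graph — and a subset of a Lipschitz graph obtained by intersecting with the preimage of a subinterval of the base is again a Lipschitz graph (restrict the Lipschitz function $f$ to a subinterval of its domain, which keeps the graph property and the convexity of the projected base). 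By compactness of $[a,b]$ the open cover $\{(s-\delta_s,s+\delta_s)\}_{s}$ has a Lebesgue number $\eta>0$; take any partition $\D=\{a=t_0<\cdots<t_n=b\}$ with mesh $<\eta$. Then each $[t_i,t_{i+1}]$ lies in one of the cover sets, so $\gamma([t_i,t_{i+1}])$ is a Lipschitz graph, as required. (The statement's ``$N$'' should read ``$n$''; I will use $n$ throughout.)

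The main obstacle, such as it is, lies in part \ref{cond:componentsJordan}: namely verifying that $\partial M$ is genuinely a topological $1$-manifold rather than merely ``locally a graph'' in a way that could a priori permit self-contact creating a figure-eight type singularity. The resolution is exactly the subgraph representation: because $M$ is, near each of its boundary points, \emph{the full subgraph} of a Lipschitz function over an open base, the boundary near that point is precisely the graph (not the graph of two functions meeting, and not a graph plus an isolated arc), so $\partial M$ inherits a manifold chart there. Once this is in hand, the classification of compact connected $1$-manifolds and a routine Lebesgue-number argument finish both parts. I will also note that each such $\gamma$ can be taken to be a rectifiable (indeed locally Lipschitz) parametrisation, since each local graph piece is Lipschitz, though this is not needed for the stated conclusion.
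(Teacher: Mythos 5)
Your proposal is correct and follows essentially the same route as the paper: the paper disposes of part (a) by citing \cite[Theorem~6.1]{L00} (the boundary of a compact Lipschitz domain is a compact $1$-manifold, whose components are circles) and of part (b) by the compactness of $M$ together with the definition of a Lipschitz domain, which is exactly your Lebesgue-number argument. You merely write out in detail what the paper delegates to the reference, including the (correct) observation that the subgraph representation rules out branching points.
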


\begin{proof}
	Part~\ref{cond:componentsJordan} follows from \cite[Theorem~6.1]{L00} and part~\ref{cond:JordanPartition} directly from the compactness of $M$ (and the definition of a Lipschitz domain).
\end{proof}

\color{black}
\subsection{Curves of finite turn}
A curve $\gamma:[a,b]\to\er^d$ is said to have a finite turn if there is a constant $K\in\er$ such that
\begin{equation*}
\begin{aligned}
\sum_{k=1}^{n-1}&\rho\left(\frac{\gamma_{i}(x_{k})-\gamma_{i}(x_{k-1})}
{|\gamma_{i}(x_{k})-\gamma_{i}(x_{k-1})|},
\frac{\gamma_{i}(x_{k+1})-\gamma_{i}(x_{k})}
{|\gamma_{i}(x_{k+1})-\gamma_{i}(x_{k})|}\right)
<K
\end{aligned}
\end{equation*}
for every partition $a=x_0<\cdots<x_n=b$ of $[a,b]$ such that the sum on the left hand side makes sense.
The above definition can be equivalently formulated that there is a constant $K\in\er$ such that
\begin{equation*}
\begin{aligned}
\sum_{k=1}^{n-1}&\left|\frac{\gamma_{i}(x_{k})-\gamma_{i}(x_{k-1})}
{|\gamma_{i}(x_{k})-\gamma_{i}(x_{k-1})|}-
\frac{\gamma_{i}(x_{k+1})-\gamma_{i}(x_{k})}
{|\gamma_{i}(x_{k+1})-\gamma_{i}(x_{k})|}\right|
<K
\end{aligned}
\end{equation*}
for every partition $a=x_0<\cdots<x_n=b$ such that the sum on the left hand side makes sense. Note that  in \cite{D08} the latter is used as a definition of a curve with a finite turn, whereas our definition of finite turn is (in \cite{D08}) referred to as a curve of a finite angular turn.
We will use the following results:
%\begin{lem}[???]
%	A curve $\gamma:[a,b]\to\er^d$ has a finite angular turn if and only it it has a finite turn.
%\end{lem}

%\begin{lem}\label{lem:partittionFT}
%	Let $\gamma:[a,b]\to\er^2$ be a curve with finite turn.
%	Then there is a partition $\D\coloneqq\{a=t_0<\cdots <t_n=b\}$ such that
%	$\gamma$ restricted to $[t_i,t_{i+1}]$ is a Lipschitz graph for every $i=0,\dots,N-1$.
%\end{lem}\color{black}
%
%\begin{proof}
%	Reference???
%\end{proof}

\color{black}
\begin{lem}\label{L:finiteTurnIsDC}
	Let $\gamma:[a,b]\to\er$ be a curve of finite turn and suppose that $\Im(\gamma)$ is a Lipschitz graph. Then $\Im(\gamma)$ is a DC graph.
\end{lem}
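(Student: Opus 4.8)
The plan is to reduce the statement to a one-dimensional fact about functions of bounded variation on their derivative. By Lemma~\ref{L:LipschitzGraphEquivalence} (or directly from the definition of a Lipschitz graph), there is $v\in\sph^{1}$, a compact interval $K\subset v^\bot$, and an $L$-Lipschitz function $f:K\to\spa(v)$ with $\Im(\gamma)=\{t+f(t):t\in K\}$. After an orthonormal change of coordinates we may assume $v=(0,1)$, so that $\Im(\gamma)=\graph f$ for an $L$-Lipschitz $f:[c,e]\to\er$. It suffices to show that such an $f$ is DCR, i.e.\ extends to a DC function on an open interval; and since a Lipschitz function on a compact interval is DC on $\er$ iff it is DC on the interval (extend affinely), it suffices to show $f$ is DC on $[c,e]$. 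A Lipschitz function on an interval is DC precisely when its derivative $f'$ (which exists a.e.) agrees a.e.\ with a function of bounded variation; equivalently, when $f = g - h$ with $g,h$ convex, equivalently when the distributional second derivative $f''$ is a signed measure of finite total variation.

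The key step is therefore to translate the finite-turn hypothesis on $\gamma$ into finite total variation of $f'$. Since $\Im(\gamma)=\graph f$ and $\gamma$ is (a homeomorphic reparametrization of) the graph, the finite-turn condition gives a uniform bound
\[
\sum_{k=1}^{n-1}\left|\frac{p_{k}-p_{k-1}}{|p_{k}-p_{k-1}|}-\frac{p_{k+1}-p_{k}}{|p_{k+1}-p_{k}|}\right|<K
\]
over all finite increasing sequences $c\le s_0<\cdots<s_n\le e$, where $p_j=(s_j,f(s_j))$. Writing the normalized chord direction as $\theta_j=(s_j-s_{j-1},f(s_j)-f(s_{j-1}))/|{\cdot}|$ and noting that the map $w\mapsto w/|w|$ from the cone $\{(1,m):m\in\er\}$ of positive-first-coordinate directions is bi-Lipschitz onto its image (with constants depending only on $L$, using that all chords have slope in $[-L,L]$), the finite-turn bound is equivalent to a uniform bound on $\sum_k |m_k - m_{k-1}|$ where $m_k=(f(s_k)-f(s_{k-1}))/(s_k-s_{k-1})$ is the chord slope on $[s_{k-1},s_k]$. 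This is exactly the statement that the difference quotients of $f$ have uniformly bounded variation, which classically forces $f'$ to be (a.e.\ equal to) a BV function: indeed letting the mesh go to zero, $m_k$ at a fixed point converges to $f'$ at Lebesgue points, and the bounded-variation bound passes to the limit, so $\var(f')\le K'<\infty$. Hence $f=g-h$ with $g(x)=\tfrac12\big(\int_c^x (f')\,\mathrm{d}\lambda + V(x)\big)$-type convex pieces, i.e.\ $f$ is DC; equivalently one writes $f' = $ (increasing) $-$ (increasing) via the Jordan decomposition of its variation. Consequently $\graph f$, and so $\Im(\gamma)$, is a DC graph.

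\textbf{Main obstacle.} The delicate point is the equivalence between the \emph{angular} turn of the graph curve $\gamma$ and the bounded variation of the slope function $m$: one must be careful that the reparametrization between $\gamma$ and the arc-length or $x$-parametrization of $\graph f$ does not affect the turn (it does not, since turn is defined through partitions and is invariant under monotone reparametrization), and that the passage from chord \emph{directions} on $\sph^1$ to chord \emph{slopes} in $\er$ is bi-Lipschitz, which is where the Lipschitz-graph hypothesis is essential — without the uniform slope bound $|m_k|\le L$ the map $m\mapsto (1,m)/\sqrt{1+m^2}$ degenerates near vertical directions and the two notions of turn genuinely differ. A secondary technical point is the limiting argument identifying $\lim m_k$ with $f'$ and showing the variation bound survives the limit; this is a standard lower-semicontinuity of total variation argument, applied along a sequence of partitions with mesh tending to zero whose nodes are chosen among Lebesgue points of $f'$, so I would only sketch it rather than belabor it.
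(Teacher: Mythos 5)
Your argument is correct, but it takes a genuinely different route from the paper's. The paper reduces to $\gamma(t)=(t,f(t))$, passes to the arc-length parametrization $\gamma\circ\varphi^{-1}$, invokes Duda's result \cite[Proposition~5.7~(i)]{D08} that the arc-length parametrization of a curve of finite turn is DCR, and then recovers $f=(f\circ\varphi^{-1})\circ\varphi$ as a composition of DC maps, after checking that $\varphi$ and $\varphi^{-1}$ are bi-Lipschitz and DCR. You bypass the reparametrization entirely: you convert the angular turn of the chords into bounded variation of the chord slopes $m_k$ via the bi-Lipschitzness of $m\mapsto(1,m)/\sqrt{1+m^2}$ on $[-L,L]$ --- which is precisely where the Lipschitz-graph hypothesis enters --- and then apply the classical characterization of DC functions on an interval as Lipschitz functions whose derivative agrees a.e.\ with a function of bounded variation. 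Your route is more elementary and self-contained (it essentially reproves the graph case of Duda's proposition rather than citing it) and makes the role of the slope bound transparent; the paper's route is shorter on the page because the analytic work is outsourced to \cite{D08}. Two points to tighten. First, both arguments silently replace $\gamma$ by the canonical parametrization $t\mapsto(t,f(t))$; this is harmless when $\gamma$ is injective (then the first coordinate of $\gamma$ is strictly monotone, and turn is invariant under monotone reparametrization, as you note), but for a non-injective $\gamma$ with $\Im(\gamma)=\graph f$ an increasing sequence $s_0<\cdots<s_n$ need not lift to ordered partition points of $[a,b]$, so a word is needed (in the paper's application the curves are injective). Second, in the limiting step from bounded variation of chord slopes to $\var(f')<\infty$, the concrete implementation of your ``lower semicontinuity'' sketch is to interleave short test intervals $[t_i,t_i+h]$ at chosen differentiability points, use the triangle inequality to discard the intermediate chords, and let $h\to 0$; with that spelled out the proof is complete.
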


\begin{proof}
	Without any loss of generality we may assume that $\gamma(t)=(t,f(t))$, $t\in[a,b]$, for some Lipschitz function $f:[a,b]\to\er$.
	
	Pick $T\in[a,b]$ and define $\varphi:[a,b]\to\er$ by the formula $\varphi(t)\coloneqq\int_{T}^{t}\sqrt{1+(f'(s))^2}\;ds$.
	Put $[c,d]\coloneqq\varphi[a,b]$
	Then $\gamma\circ\varphi^{-1}$ is the arc length parametrisation of $\gamma$.
	Moreover, since for $a<u<t<b$
	\begin{equation*}
	|\varphi(t)-\varphi(u)|=\left|\int_{T}^{t}\sqrt{1+(f'(s))^2}\;ds
	-\int_{T}^{u}\sqrt{1+(f'(s))^2}\;ds\right|
	=\left|\int_{u}^{t}\sqrt{1+(f'(s))^2}\;ds\right|
	\end{equation*}
	and 
	\begin{equation*}
	|t-u|\leq\left|\int_{u}^{t}\sqrt{1+(f'(s))^2}\;ds\right|\leq |t-u|\sqrt{1+L^2},
	\end{equation*}
	we have that both $\varphi$ and $\varphi^{-1}$ are biLipschitz.
	
	By \cite[Proposition~5.7~(i)]{D08} we know that $\gamma\circ\varphi^{-1}$ is
	(as an arc length parametrization of a curve with a finite turn) DCR and so both $\varphi^{-1}$ and $f\circ\varphi^{-1}$ are (as its coordinates) also DCR.
	Moreover, \cite[Remark~5.6~(i)]{D08} and the fact that $\varphi^{-1}$ is biLipschitz imply that $\varphi$ is also DCR. 
	So (by \cite[Remark~5.6~(ii)]{D08}) $f=f\circ\varphi^{-1}\circ\varphi$ is DC,
	which concludes the proof.
\end{proof}\color{black}

%\begin{lem}
%	$\gamma:[a,b]\to\er^d$ has a finite turn then the
%\end{lem}
\section{$\UWDC$ sets ant their normal cycles}\label{sec:curvatures}
\color{black}
	
\begin{definition}\label{def:UWDC}
A set $M\subset\er^d$ belongs to $\UWDC(\er^d)$ (or is a $\UWDC$ set) if for every $x\in M$ there is a neighbourhood $U$ of $x$ and sets $M_1,\dots,M_{j}$ such that
$M\cap U=U\cap \bigcup_{i=1}^{j} M_i$ and such that each set $M_{I}:=\bigcap_{i\in I} M_i$, $I\in\Sigma_j$, is $\WDC$.

We will also define an auxiliary class $\UWDCG$ of the sets $M=\bigcup_{i=1}^{N} M_i\subset\er^d$ such that each set $M_{I}:=\bigcap_{i\in I} M_i$, $I\in\Sigma_N$, is compact $\WDC$.	
\end{definition}

\begin{rem}\label{rem:UWDSremarks}
	\begin{enumerate}[label={ (\alph*)}]
		\item We will sometimes omit the argument $\er^d$ and just write $M\in\UWDC$ ($M\in\UWDCG$) if the dimension $d$ is already specified. We will also often write $M=\bigcup_{i=1}^{j} M_i\in\UWDCG$ implying that the sets $M_i$ are the corresponding sets in the definition of a $\UWDCG$ set (related to $M$).
		\item Due to the local nature of the notion of $\UWDC$ sets, it makes no difference if the assumption that $M_I$ is $\WDC$ in the definition of $\UWDC$ sets is replaced with the assumption that $M_I$ is locally $\WDC$. 
		\item Note that it follows directly from the definition above that each $\UWDC$ set is immediately also a $\UPR$ set (since the definition of $\UPR$ sets is essentially the same except the sets $M_I$ are assumed to have a positive reach which means that they are locally $\WDC$ as well)
		\item\label{item:remarkIntersection} It is easy to see that (by Proposition~\ref{P:intersectingBalls}) a set $M\subset\er^d$ is $\UWDC$ if and only if for every $x\in M$ there is a closed ball $B$ containing $x$ in the interior (but not necessarily centred at $x$) such that $M\cap B$ is $\UWDCG$.
	\end{enumerate}
\end{rem}\color{black}

\color{black}
\begin{lem}
Every $M\in\UWDC$ is locally contractible.
\end{lem}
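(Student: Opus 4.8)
The plan is to use the local characterisation from Remark~\ref{rem:UWDSremarks}\ref{item:remarkIntersection}: given $x \in M$, there is a closed ball $B$ with $x \in \INt B$ such that $M \cap B$ is $\UWDCG$, say $M \cap B = \bigcup_{i=1}^N M_i$ with every $M_I = \bigcap_{i \in I} M_i$ ($I \in \Sigma_N$) compact and $\WDC$. It suffices to show that $M \cap B$ is contractible (or at least that some neighbourhood basis of $x$ in $M$ consists of contractible sets — we can shrink $B$ afterwards and rerun the argument, since for every smaller ball $B'$ with $x \in \INt B' \subset B' \subset \INt B$ the set $M \cap B'$ is still $\UWDCG$ by Proposition~\ref{P:intersectingBalls} applied to the finite collection $\{M_i\}$ intersected with $B'$). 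So the core is: a compact $\UWDCG$ set $N = \bigcup_{i=1}^N M_i$ with all intersections $M_I$ $\WDC$ is contractible provided it is ``small'' — and this is where I would want to normalise so that each $M_i$ (hence each $M_I$) lies in a ball of radius less than the reach-like constant governing the $\WDC$ structure, which forces each nonempty $M_I$ to be contractible.

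The key input is that a $\WDC$ set of small diameter is contractible. First I would recall that if $A = f^{-1}(\{0\})$ for a DC function $f \geq 0$ with weakly regular value $0$, then for small $t > 0$ the sublevel set $\{f \leq t\}$ deformation retracts onto $A$ (this is the standard ``no critical values in $(0,t]$'' argument, available because weak regularity of $0$ gives a uniform lower bound on $|\partial f|$ near $A$; one builds a Lipschitz flow pushing down the gradient, as in the proof of Theorem~\ref{T:pokornyRatajLocWDC} / \cite{PR13}). Hence each $M_I$ has arbitrarily small contractible neighbourhoods, so $M_I$ itself is an ANR and is locally contractible; if in addition $\diam M_I$ is small enough we may take $\{f \leq t\}$ convex-ish, or more robustly, shrink $B$ so that $M \cap B$ sits inside a $\WDC$ set (namely some $M_i$, or $M_I$) of diameter below its own contractibility threshold — concretely, every $\WDC$ set is locally contractible (again via the retraction of small sublevel sets), so after shrinking $B$ we may assume each $M_i \cap B$ is contractible and, more importantly, that the full nerve of the cover $\{M_i\}$ behaves well.

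With each $M_I$ contractible, the natural tool is the nerve theorem. The cover $\{M_1, \dots, M_N\}$ of $N = M \cap B$ is a finite closed cover; since the $M_i$ are $\WDC$ they are ANRs (locally contractible compacta embedded in $\er^d$), and all intersections $M_I$ are either empty or contractible, so the cover is a good cover and the nerve theorem gives a homotopy equivalence $N \simeq |\mathcal{N}|$ where $\mathcal{N}$ is the nerve. Thus it remains to arrange that the nerve is contractible: this holds if, for instance, the cover has a common point, i.e. $\bigcap_{i=1}^N M_i \ni$ some point — which we can force by the final shrinking step, choosing $B$ small enough around $x$ that $x \in M_i$ for all $i$ that meet a neighbourhood of $x$ (discard the others, they don't meet $M \cap B'$), so that $x \in M_I$ for every surviving $I$ and the nerve is a simplex, hence contractible. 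Chaining the homotopy equivalences, $M \cap B' \simeq |\mathcal{N}| \simeq \text{point}$.

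\emph{Main obstacle.} The delicate point is the deformation-retraction of a small sublevel set of a nonnegative DC function with weakly regular value $0$ onto its zero set, and in particular getting it \emph{uniformly} for all the finitely many functions $f_I$ defining the $M_I$ simultaneously, so that a single choice of ``small ball $B'$'' works for the whole nerve. One must check that weak regularity genuinely yields a locally Lipschitz descending pseudo-gradient flow (Clarke subdifferentials are only upper semicontinuous set-valued, so one selects a continuous vector field $v(y)$ with $\langle \xi, v(y)\rangle$ bounded below for $\xi \in \partial f(y)$ via a partition-of-unity argument on the compact region $\{0 < f \leq t\}$) and that the flow reaches $\{f = 0\}$ in finite time. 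This is exactly the kind of argument underlying \cite{PR13}, so I would cite it rather than redo it; once it is in hand, everything else (shrinking $B$, applying the nerve theorem to a cover with a common point) is routine.
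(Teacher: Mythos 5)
Your overall strategy (nerve theorem applied to the cover $\{M_i\}$ after arranging a common point $x$) is genuinely different from the paper's, but it rests on a key input that is false as stated and unavailable in the form actually needed. You assert that a $\WDC$ set of small diameter is contractible; a circle of radius $\eps$ in $\er^2$ has positive reach, hence is $\WDC$, has arbitrarily small diameter, and is not contractible. What your nerve argument really needs is that every nonempty intersection $M_I\cap B(x,r)$ is contractible for all small $r>0$, i.e.\ that $\WDC$ sets admit a neighbourhood basis of contractible sets at each point. The retraction of small sublevel sets $\{f\le t\}$ onto $A=f^{-1}(\{0\})$ only makes $A$ a neighbourhood retract, hence an ANR and locally contractible in the weak sense (small sets contract inside slightly larger ones); it does not produce contractible sets $A\cap B(x,r)$, and no such statement is proved or cited for $\WDC$ sets in general dimension. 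Without it the cover is not known to be good and the nerve theorem does not apply. (A secondary point: the nerve theorem for finite \emph{closed} covers needs additional hypotheses on the pieces, which you would also have to verify.)

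The paper sidesteps contractibility of the pieces entirely. It takes the strong deformation retractions $\Phi_I:G_I\times[0,1]\to M_I$ of open neighbourhoods $G_I\supset (M_I)_\eps$ onto $M_I$ from \cite{PRZ18} --- exactly the ANR-type statement you do have --- and assembles from the induced projections $P_I(z)=\Phi_I(z,1)$ a single continuous retraction $P:B(x,\eps)\to M$, by ordering the indices according to the distances $\dist(z,M_i)$ and composing the maps $P_{I}$ along the resulting increasing chain of index sets (the construction of \cite{RZ01}, Proposition~2.1). The contraction is then simply $\Xi(z,t)=P((1-t)z+tx)$. If you want to salvage your approach you would need to first establish the missing local statement (contractibility of $M_I\cap B(x,r)$ for small $r$), which is nontrivial in general dimension; the paper's route avoids it altogether.
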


\begin{proof}
	This is essentially the proof of \cite[Proposition~2.1]{RZ01}.
Pick $x\in M$ and suppose that $M_i$, $i=1,\dots,j$, are as in the definition of $\UWDC$ set for some neighbourhood $U$ of $x$.
Due to Proposition~\ref{P:intersectingBalls} we can assume that $x\in \bigcap_{i=1}^{j} M_i$.
By \cite[Lemma~3.1]{PRZ18} there is an $\eps>0$ together with a (strong) deformation retractions $\Phi_I: G_I\times[0,1]\to M_I:=\bigcap_{i\in I} M_i$, $I\in\Sigma_j$, with $(M_I)_{\eps}\subset G_I$ and $G_I$ open.
Those induce projections $P_I:G_I\to M_I$ defined by $P_I(z)=\Phi_I(z,1)$.
Using those projections we will define a projection $P:B(x,\eps)\to M$ as follows.

For $z\in B(x,\eps)$ denote $d_i(z):=\dist(z,M_i)$, $i=1,\dots,j$.
For $\sigma$, a permutation on $\{1,\dots,j\}$, define 
\begin{equation*}
M_\sigma\coloneqq\left\{ z\in B(x,\eps): d_{\sigma(i)}(z)\leq d_{\sigma(i+1)}(z),\; i=1,\dots,j-1 \right\}.
\end{equation*}
%Without any loss of generality we can suppose that $M_i$ are ordered in such a way that $d_i\leq d_{i+1}$, $i=1,\dots,j-1$.
Denote $I_i^\sigma=\{\sigma(1),\dots,\sigma(i)\}$, $i=1,\dots,j$.
For $\sigma$, a permutation on $\{1,\dots,j\}$, and $z\in M_\sigma $ put $z^\sigma_{j}=P_{I^\sigma_{j}}(z)$ and define
\begin{equation*}
z^\sigma_i=P_{I^\sigma_{i}}\left(\left(1-\frac{d_i(z)}{d_{i+1}(z)}\right)P_{I^\sigma_{i}}(z)
+\frac{d_i(z)}{d_{i+1(z)}}z^\sigma_{i+1}\right),\quad i=1,\dots j-1.
\end{equation*}
Note that if $d_{\sigma(i)}(z)= d_{\sigma(i+1)}(z)$ then $z^\sigma_{i}=z^\sigma_{i+1}$ and so $z^\sigma_{i}=z^\tau_{i}$, $i=1,\dots,j$, whenever $z\in M_\sigma\cap M_\tau$.
Therefore the mapping $P(z)\coloneqq z^\sigma_1$, $z\in M_\sigma$ is well defined.
The mapping $P$ is also continuous on each $M_\sigma$ due to the continuity of all mappings $P_I$ and so $P$ is continuous on $B(x,\eps)$.
To conclude the proof it remains to define a contraction $\Xi:(M\cap B(x,\eps))\times [0,1]\mapsto \{x\}$.
This can be done in a standard way by
$
\Xi(z,t)=P((1-t)z+tx).
$
\end{proof}

\begin{cor}
Let $M,K,M\cap K\in \UWDC$ then $\chi(M)$, $\chi(K)$, $\chi(M\cap K)$ and $\chi(M\cup K)$ are all well defined and
\begin{equation*}
\chi(M)+\chi(K)=\chi(M\cap K)+\chi(M\cup K).
\end{equation*}
\end{cor}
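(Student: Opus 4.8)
The plan is to use the additivity of the Euler characteristic for compact sets that are nice enough — here the relevant tool is that all of $M$, $K$, $M\cap K$ and $M\cup K$ are locally contractible compact sets (hence, being $\UWDC$, they are also ENRs via the preceding lemma together with the fact that $\UWDC$ sets are $\UPR$ and thus, by Remark~\ref{rem:UWDSremarks}, locally contractible compacta embedded in $\er^d$), so that $\check{\mathrm{C}}$ech cohomology and singular cohomology agree and the Euler characteristic is well defined and additive. First I would observe that $M\cup K$ is again $\UWDC$: this is immediate from the definition, since if near $x$ we have $M\cap U = U\cap\bigcup_i M_i$ with all intersections $M_I$ being $\WDC$, and $K\cap U = U\cap\bigcup_j K_j$ with all $K_J$ being $\WDC$, then after shrinking $U$ (using Proposition~\ref{P:intersectingBalls} to intersect with a small ball so that everything becomes $\UWDCG$, cf.\ Remark~\ref{rem:UWDSremarks}\ref{item:remarkIntersection}), $(M\cup K)\cap U$ is the union of the family $\{M_i\}\cup\{K_j\}$, and every nonempty intersection of members of this combined family is either some $M_I$, some $K_J$, or an intersection $M_I\cap K_J$, which is $\WDC$ by Proposition~\ref{P:intersectingBalls} after a further generic perturbation — or, more cleanly, one notes the local structure is already present in the hypothesis $M\cap K\in\UWDC$. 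So all four sets are $\UWDC$, hence locally contractible compacta.

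The second step is to invoke the additivity (inclusion–exclusion) of the Euler characteristic. For compact ENRs $A,B$ with $A\cup B$ and $A\cap B$ also ENRs, the Mayer–Vietoris sequence in (singular, equivalently $\check{\mathrm{C}}$ech) cohomology with, say, rational coefficients is exact, and since all four spaces have finitely generated cohomology (being compact ENRs of finite covering dimension), the alternating sum of Betti numbers along the long exact sequence telescopes to give
\begin{equation*}
\chi(A)+\chi(B)=\chi(A\cap B)+\chi(A\cup B).
\end{equation*}
Here I would take $A=M$, $B=K$; all of $M,K,M\cap K$ are $\UWDC$ by hypothesis and $M\cup K$ is $\UWDC$ by the first step, so all four are locally contractible compacta in $\er^d$, hence compact ENRs, and the formula follows. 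The "well defined" part of the claim is exactly the statement that these Euler characteristics do not depend on the (co)homology theory used, which is the standard ENR comparison.

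The one point requiring a little care — and the step I expect to be the main obstacle in writing it up rigorously — is the passage from "locally contractible compact subset of $\er^d$" to "ENR" (or, alternatively, the direct verification that $\check{\mathrm{C}}$ech cohomology is finitely generated and Mayer–Vietoris applies). The cleanest route is: a compact, locally contractible, finite-dimensional metric space is an ANR (by a theorem of Borsuk), and an ANR embedded as a compact subset of $\er^d$ is an ENR; ENRs have finitely generated (co)homology and admit Mayer–Vietoris for closed covers. One should double-check that local contractibility as proved in the preceding lemma is the ANR-relevant form (it is — the lemma produces, for each $x$, a contraction of a whole ball neighbourhood $B(x,\eps)$ into $\{x\}$ inside $M$, which gives an actual neighbourhood-contraction, stronger than mere weak local contractibility). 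Given that, everything else is bookkeeping with the Mayer–Vietoris sequence, and I would state the result as a corollary of the preceding lemma together with standard algebraic topology, citing a reference for the ANR/ENR facts rather than reproving them.
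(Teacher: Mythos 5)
Your proposal follows essentially the same route as the paper, which simply defers to the proof of \cite[Proposition~2.2]{RZ01}: local contractibility (from the preceding lemma) plus Borsuk's criterion makes all the sets in question compact ANRs/ENRs, so their (co)homology is finitely generated, the Euler characteristics are well defined and independent of the homology theory, and the inclusion--exclusion formula drops out of the Mayer--Vietoris sequence for the closed cover $\{M,K\}$ of $M\cup K$. One correction, though: your first step, the claim that $M\cup K\in\UWDC$, is neither justified as written nor needed. It is not justified because the sets $M_i$, $K_j$ are fixed, so you cannot invoke Proposition~\ref{P:intersectingBalls} to make the intersections $M_I\cap K_J$ into $\WDC$ sets ``after a further generic perturbation'' --- that proposition only gives $\WDC$-ness for \emph{almost every} rigid motion, not for the identity --- and the hypothesis $M\cap K\in\UWDC$ does not obviously supply compatible local decompositions of $M$ and $K$. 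It is not needed because the relevant fact is only that $M\cup K$ is a compact ANR, and this follows from the classical union theorem for ANRs: if $A$, $B$ are closed subsets of a metric space and $A$, $B$, $A\cap B$ are ANRs, then so is $A\cup B$. With that substitution your argument is exactly the one the paper intends.
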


\begin{proof}
The proof is exactly the same as the one of \cite[Proposition~2.2]{RZ01}.
\end{proof}\color{black}

\color{black}
Consider $M=\bigcup_{i=1}^{N} M_i\in\UWDCG$. Then $N_{M_{I}}$, the normal cycle of $M_I\coloneqq\bigcap_{i\in I}M_i\in\WDC$, exists (by Theorem~\ref{T:pokornyRatajLocWDC}) for every $I\in\Sigma_N$ and we can define an integral current $T(M)$ as 
\begin{equation}
T(M):=\sum\limits_{n=1}^{N}(-1)^{n+1}\sum\limits_{|I|=n} N_{M_I}.
\end{equation}
%Let $f_I$ be a DC aura of $M_I$.
Clearly, $T(M)$ is a Legendrian cycle.
Let $S:=\bigcup_{I}\supp T_{M_I}$.
Then $\supp T(M)\subset S$ and therefore if a half space $H$ does touch neither of $N_{M_I}$ it does not touch $T(M)$ either.
Denote the system of all half spaces $H$ that touch neither $N_{M_I}$ by $\H$.
Since the currents $ N_{M_I}$ are normal cycles $\H$ is of full measure and so $T(M)$ satisfies \eqref{eq:NormalCycleTouching}.

Moreover, for every $I\in\Sigma_N$ there is a set $\H_I\subset\sphd\times\er$ of a full $d$-dimensional Hausdorff measure such that $M_I\cap H_{v,t}$ is $\WDC$ and that
\begin{equation}\label{eq:NormalCycleEulerLoc}
\langle T_{M},\pi_1,-v\rangle(H_{v,t}\times \sphd)=\chi(M\cap H_{v,t}).
\end{equation} 
holds for every $(v,x)\in\H_I$.

%since condition \eqref{eq:NormalCycleEuler}

To prove that $T(M)$ is the normal cycle of $M$, it is enough to show that \eqref{eq:NormalCycleEuler} holds for every $(x,v)\in\bigcap_{I\in\Sigma_N}\H_I\eqqcolon \widetilde \H$.
To do that pick some $(x,v)\in\widetilde \H$ and we want to prove that \eqref{eq:NormalCycleEulerLoc} holds.

First note that the right hand side of \eqref{eq:NormalCycleEulerLoc} makes sense since
\begin{equation*}
\widetilde M\coloneqq M\cap H_{v,t}=\bigcup_{i=1}^{N} (M_i\cap H_{v,t})\eqqcolon \bigcup_{i=1}^{N}\widetilde M_i
\end{equation*}
is $\UWDC$ by the that each of the sets $\widetilde M_I$ defined by
\begin{equation*}
\widetilde M_I=\bigcap_{i\in I} \widetilde M_i=H_{v,t}\cap\bigcap_{i\in I}  M_i
\end{equation*}
is $\WDC$.
Now,
\begin{equation*}
\begin{aligned}
\langle T(M),\pi_1,-v\rangle(H_{v,t}\times \sph^{d-1})&=\left\langle\sum\limits_{n=1}^{N}(-1)^{n+1}\sum\limits_{|I|=n} N_{M_I},\pi_1,-v\right\rangle(H_{v,t}\times \sph^{d-1})\\
&=\sum\limits_{n=1}^{N}(-1)^{n+1}\sum\limits_{|I|=n}\left\langle N_{M_I},\pi_1,-v\right\rangle(H_{v,t}\times \sph^{d-1})\\
&=\sum\limits_{n=1}^{N}(-1)^{n+1}\sum\limits_{|I|=n}\chi(M_I\cap H_{v,t})\\
&=\chi(M\cap H_{v,t}).
\end{aligned}
\end{equation*}

\color{black}
Thus we obtain the following:
\begin{lem}\label{l:globalNormalCycle}
Every compact $M=\bigcup_{i=1}^{N} M_i\in\UWDCG$ admits the normal cycle $N_M$.
Moreover, $N_M$ is satisfies the formula
\begin{equation}\label{E:N_M via additivity}
N_M=\sum\limits_{n=1}^{N}(-1)^{n+1}\sum\limits_{|I|=n} N_{M_I}\,
\end{equation}
\end{lem}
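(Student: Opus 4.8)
The plan is to verify, by hand, that the alternating sum of normal cycles on the right of \eqref{E:N_M via additivity} satisfies all three conditions in Definition~\ref{D-NC}, and then to invoke uniqueness of the normal cycle. Note that one cannot simply iterate the additivity relation $N_A+N_B=N_{A\cap B}+N_{A\cup B}$ for $\WDC$ sets, because the partial unions $M_1\cup\dots\cup M_k$ need not be $\WDC$ (this is precisely the gain of the class $\UWDCG$ over $\WDC$), so a direct argument is needed. Since each $M_I:=\bigcap_{i\in I}M_i$, $I\in\Sigma_N$, is compact $\WDC$, Theorem~\ref{T:pokornyRatajLocWDC} gives the normal cycles $N_{M_I}$, and I put
\[
T(M):=\sum_{n=1}^{N}(-1)^{n+1}\sum_{|I|=n}N_{M_I},
\]
a finite integer linear combination of integer multiplicity rectifiable $(d-1)$-currents in $\er^d\times\sphd$, hence a current of the same type.

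First I would observe that $T(M)$ is a Legendrian cycle: the conditions $\partial T=0$ and $T\llc\alpha=0$ are additive in $T$ and hold for each $N_{M_I}$. Next, condition \eqref{eq:NormalCycleTouching}: since $\spt T(M)\subseteq\bigcup_{I\in\Sigma_N}\spt N_{M_I}$, a halfspace touching $T(M)$ must touch some $N_{M_I}$; the halfspaces touching a fixed $N_{M_I}$ form an $\H^d$-null subset of $\sphd\times\er$, and a finite union of such sets is still null because $\Sigma_N$ is finite.

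The heart of the argument is condition \eqref{eq:NormalCycleEuler}. For each $I$ let $\H_I\subseteq\sphd\times\er$ be the full-measure set of pairs $(v,t)$ for which $M_I\cap H_{v,t}$ is $\WDC$ (by the slicing/Crofton properties of $\WDC$ sets) and for which \eqref{eq:NormalCycleEuler} holds for $N_{M_I}$; set $\widetilde\H:=\bigcap_{I\in\Sigma_N}\H_I$, still of full measure. Fixing $(v,t)\in\widetilde\H$ and using linearity of Federer's slicing operator $\langle\,\cdot\,,\pi_1,-v\rangle$ at the level $t$, I get
\[
\langle T(M),\pi_1,-v\rangle(H_{v,t}\times\sphd)=\sum_{n=1}^{N}(-1)^{n+1}\sum_{|I|=n}\chi(M_I\cap H_{v,t}).
\]
Now $M\cap H_{v,t}=\bigcup_{i=1}^{N}(M_i\cap H_{v,t})$ has the $\WDC$ sets $M_I\cap H_{v,t}$ as its multiple intersections, so it lies in $\UWDCG$ and is therefore locally contractible by the local-contractibility lemma; iterating the two-set additivity of the Euler characteristic (the corollary above) over the finitely many indices in $\Sigma_N$ turns the last display into $\chi(M\cap H_{v,t})$. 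This establishes \eqref{eq:NormalCycleEuler} for $T(M)$; since $M$ is compact, the uniqueness clause of Definition~\ref{D-NC} gives $N_M=T(M)$, i.e.\ \eqref{E:N_M via additivity}.

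I expect the only real obstacles to be bookkeeping ones: ensuring the finitely many exceptional null sets (one per $I\in\Sigma_N$, plus the a.e.\ set of admissible slicing levels) can be discarded simultaneously, and carrying out the induction that upgrades the two-set additivity of $\chi$ to the full inclusion--exclusion identity --- both are harmless precisely because $\Sigma_N$ is finite. The one external input I would quote rather than reprove is that $M_I\cap H_{v,t}$ is $\WDC$ for $\H^d$-almost every $(v,t)$.
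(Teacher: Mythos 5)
Your proposal is correct and follows essentially the same route as the paper: define the alternating sum $T(M)$ of the normal cycles $N_{M_I}$, check the Legendrian cycle and touching conditions via finiteness of $\Sigma_N$, verify the Euler characteristic condition on a common full-measure set of halfspaces using inclusion--exclusion for $\chi$ on $\UWDCG$ slices, and conclude by uniqueness. Your added remarks (why one cannot iterate the two-set normal-cycle additivity directly, and how the two-set additivity of $\chi$ upgrades to inclusion--exclusion) only make explicit what the paper leaves implicit.
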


This allows us to prove our first main result.
\color{black}
\begin{thm}\label{thm:UWDChasNC}
		Each compact $\UWDC$ set admits the normal cycle.
\end{thm}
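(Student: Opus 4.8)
The plan is to localise the question to the $\UWDCG$ case treated in Lemma~\ref{l:globalNormalCycle} and then to glue the local normal cycles by inclusion--exclusion. Since the normal cycle is unique whenever it exists (Definition~\ref{D-NC}), it suffices to exhibit one Legendrian cycle $T$ satisfying \eqref{eq:NormalCycleTouching} and \eqref{eq:NormalCycleEuler}. By Remark~\ref{rem:UWDSremarks}\ref{item:remarkIntersection} every point of the compact set $M$ lies in the interior of a closed ball $B$ with $M\cap B\in\UWDCG$, so by compactness there are closed balls $B_1,\dots,B_m$ whose interiors cover $M$ and with each $M\cap B_k\in\UWDCG$. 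Writing $B_J:=\bigcap_{k\in J}B_k$ for $\varnothing\neq J\subset\{1,\dots,m\}$, I would in addition require, choosing the balls one by one and each one generically via Proposition~\ref{P:intersectingBalls} (the intersection of a $\WDC$ set with a generic ball is $\WDC$), that $M\cap B_J\in\UWDCG$ for \emph{every} such $J$. Indeed, for $k_0\in J$ one has $M\cap B_J=\bigcup_i(A_i\cap B_J)$ where $M\cap B_{k_0}=\bigcup_iA_i$ is the $\UWDCG$-representation of $M\cap B_{k_0}$, and for an index set $I$ one has $\bigcap_{i\in I}(A_i\cap B_J)=\big(\bigcap_{i\in I}A_i\big)\cap B_J$ with $\bigcap_{i\in I}A_i$ a $\WDC$ set; so it suffices that all these finitely many sets $\big(\bigcap_{i\in I}A_i\big)\cap B_J$ be $\WDC$, which can be achieved by taking each new ball generic with respect to every $\WDC$ set produced by the earlier balls.

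By Lemma~\ref{l:globalNormalCycle} each compact $\UWDCG$ set $M\cap B_J$ carries its normal cycle $N_{M\cap B_J}$, and I set (with the convention $N_\varnothing:=0$)
\begin{equation*}
T:=\sum_{\varnothing\neq J\subset\{1,\dots,m\}}(-1)^{|J|+1}\,N_{M\cap B_J}.
\end{equation*}
Being a finite integer combination of Legendrian cycles, $T$ is itself a Legendrian cycle. Moreover $\supp T\subset\bigcup_J\supp N_{M\cap B_J}$, and since almost every halfspace touches none of the normal cycles $N_{M\cap B_J}$, almost every halfspace fails to touch $T$; thus \eqref{eq:NormalCycleTouching} holds.

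It remains to check \eqref{eq:NormalCycleEuler}. For $\H^d$-almost every $(v,t)$ one has simultaneously $\langle N_{M\cap B_J},\pi_1,-v\rangle(H_{v,t}\times\sphd)=\chi(M\cap B_J\cap H_{v,t})$ for all $J$ (the defining property of a normal cycle) and, as observed before Lemma~\ref{l:globalNormalCycle}, every $M\cap B_J\cap H_{v,t}$ is again $\UWDCG$; consequently the finite closed cover $M\cap H_{v,t}=\bigcup_{k=1}^m(M\cap B_k\cap H_{v,t})$ together with all unions and intersections built from its members consists of $\UWDC$ sets. For such $(v,t)$, linearity of the slicing operator gives $\langle T,\pi_1,-v\rangle(H_{v,t}\times\sphd)=\sum_J(-1)^{|J|+1}\chi(M\cap B_J\cap H_{v,t})$, while iterating the additivity of the Euler characteristic on $\UWDC$ sets (the Corollary above) along the cover $\{M\cap B_k\cap H_{v,t}\}_{k=1}^m$, whose $|J|$-fold intersection is $M\cap B_J\cap H_{v,t}$, gives $\chi(M\cap H_{v,t})=\sum_J(-1)^{|J|+1}\chi(M\cap B_J\cap H_{v,t})$. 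Comparing the two identities yields \eqref{eq:NormalCycleEuler}, so $T$ is a normal cycle of $M$ and $M$ admits the normal cycle. The hard part will be the genericity arrangement in the first paragraph: because the intersection of a $\WDC$ set with a \emph{fixed} ball need not be $\WDC$, one has to pin down the whole finite ball cover so carefully that not merely the $M\cap B_k$ but all the $M\cap B_J$ — and, for the iterated additivity of $\chi$, the intermediate partial unions as well — remain in the $\UWDC$ world; granting that, the rest is routine bookkeeping with currents and with the valuation property of $\chi$.
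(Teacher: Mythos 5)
Your proposal is correct and follows essentially the same route as the paper: cover the compact set by finitely many closed balls chosen generically (via Proposition~\ref{P:intersectingBalls}) so that every $M\cap B_J$ is $\UWDCG$, apply Lemma~\ref{l:globalNormalCycle} to each piece, define the candidate cycle by inclusion--exclusion over the ball index sets, and verify \eqref{eq:NormalCycleTouching} and \eqref{eq:NormalCycleEuler} using the additivity of $\chi$. The only detail the paper makes explicit that you leave implicit is the $r_j/2r_j/3r_j$ margin ensuring that each perturbed ball both keeps covering $M$ and stays inside the neighbourhood where the local $\WDC$ representation of $M$ is valid.
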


\begin{proof}
Pick a compact set $M\in\UWDC(\er^d)$. We may assume that $M\not=\varnothing$.
	By the compactness of $M$ and the definition of a $\UWDC$ set there are 
	$n\in\en$,
	$x_1,\dots,x_n\in M$, $r_1,\dots,r_n>0$, $i_1,\dots, i_n\in\en$ and $M_i^j\subset\er^d$, $j=1,\dots,n$, $i=1,\dots, i_j$ such that:
	\begin{enumerate}[label={ (\alph*)}]
		\item\label{cond:sameIntersections} $M\cap B(x_j,3r_j)=\left(\bigcup_{i=1}^{i_j} M_i^j\right)\cap B(x_j,3r_j)$ for every $j=1,\dots,n$,
		\item\label{cond:Mcovered} $M\subset\bigcup_{j=1}^n B(x_j,r_j)$,
		\item\label{cond:intersectionsWDC} $M_I^j\coloneqq\bigcap_{i\in I} M^j_i$ is $\WDC$ for every $I\in\Sigma_{i_j}$ and every $j=1,\dots,n$.
	\end{enumerate}
	
Next we claim that we can find balls $B_k=B(y_k,2r_k)$, $k=1,\dots,n$ such that for every for every $k=0,\dots,n$ the following conditions hold:
\begin{enumerate}[label={ (\Alph*)}]
	\item\label{cond:ballInside} if $k>0$ then $
	B(x_k,r_k)\subset B(y_k,2r_k)\subset B(x_k,3r_k)
	$
	\item \label{cond:WDCsystem} the system
	\begin{equation}
	\sM_k\coloneqq 
	\begin{cases}
	\{B_J\cap M_I^j:\; J\in\Sigma_k, I\in\Sigma_{i_j} \}\cup \{M_I^j:\;  I\in\Sigma_{i_j} \}&\text{if}\quad k>0\\ 
	\{M_I^j:\;  I\in\Sigma_{i_j} \}&\text{if}\quad k=0
	\end{cases}
	\end{equation}
	is a system of $\WDC$ sets, where we denote $B_J=\bigcap_{j\in J} B_j$, $J\in\Sigma_k$.
\end{enumerate}
This will be done by induction.

For $k=0$ we just need to verify condition~\ref{cond:WDCsystem}, but that follows directly from condition~\ref{cond:intersectionsWDC}.
For the induction step assume that we have balls $B_1,\dots,B_{k}$ constructed with both conditions \ref{cond:ballInside} and \ref{cond:WDCsystem} satisfied for some $k\in\{0,\dots,n-1\}$.

Using the second part of Proposition~\ref{P:intersectingBalls} with $r=2r_{k+1}$ and $\sM=\sM_{k}$ (which is possible by the induction procedure) we can find $B_{k+1}$ such that \ref{cond:ballInside} holds and such that
\begin{equation*}
\widetilde\sM\eqqcolon \{ B_k\cap K: K\in\sM_{k} \}
\end{equation*}
is a system of $\WDC$ sets. 
Now it is enough to observe that $\sM_{k+1}=\widetilde{\sM}\cup \sM_{k}$ which proves the claim.

For $J\in\Sigma_n$ define 
\begin{equation*}
L_J\coloneqq M\cap B_J.
\end{equation*}
Then 
\begin{equation*}
L_J=\bigcup_{i=1}^{i_j} B_J\cap M_i^j
\end{equation*}
whenever $j\in J$ by \ref{cond:sameIntersections}.
Since $B_J\cap M_I^j\in\sM_n$, $I\in\Sigma_{i_j}$, we obtain (by \ref{cond:WDCsystem}) that $L_J$ is $\UWDCG$, $J\in \Sigma_n$.

Moreover, 
\begin{equation*}
M=\bigcup_{|I|=1} L_I
\end{equation*}
by \ref{cond:Mcovered} and \ref{cond:ballInside}.
Therefore we can again define an integral current
\begin{equation}
T(M):=\sum\limits_{n=1}^{N}(-1)^{n+1}\sum\limits_{|I|=n} N_{L_I}.
\end{equation}
	The fact that $T(M)$ is indeed the normal cycle of $M$ follows the same way as in the proof of Lemma~\ref{l:globalNormalCycle}. 
\end{proof}\color{black}
\color{black}
\begin{thm}[Kinematic formula]\label{thm:kinematicFormula}
	Let $M$ and $K$ be two compact $\UWDC$ sets in $\er^d$  and let $0\leq k\leq d-1$.
	Then $M\cap g(K)\in\UWDC$ for almost every $g\in\G_d$ and
	\begin{equation}\label{eq:KF}
	\int_{{\cal G}_d}C_k(M\cap g K,U\cap g V)\, dg=\sum_{i+j=d+k}\gamma_{d,i,j}C_i(M,U)C_j(K,V),
	\end{equation}
	where $\gamma_{d,i,j}$ are constants depending only on $d$, $i$ and $j$.
\end{thm}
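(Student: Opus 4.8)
The plan is to reduce the kinematic formula for $\UWDC$ sets to the kinematic formula for $\WDC$ sets (Theorem~\ref{thm:kinematicForWDC}) by means of the additivity of the normal cycle, exactly the way the additivity was used in Lemma~\ref{l:globalNormalCycle} and in the proof of Theorem~\ref{thm:UWDChasNC}. The first step is to fix a cover: by compactness and the definition of $\UWDC$, together with the ball-shrinking construction in the proof of Theorem~\ref{thm:UWDChasNC} (conditions \ref{cond:ballInside}--\ref{cond:WDCsystem}), we may write $M = \bigcup_{|I|=1} L_I$ and $K = \bigcup_{|I|=1} L'_I$ as finite unions where every finite intersection $L_I := \bigcap_{i\in I} L_i$ is a compact $\WDC$ set, and similarly for the $L'_I$; here each $L_I$ and $L'_I$ is in fact $\UWDCG$, but the point is that the ``multi-index intersection'' sets are $\WDC$. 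Then $M\cap g(K) = \bigcup_{i,j} L_i \cap g(L'_j)$, and its multi-index intersections are $\bigcap_{i\in I}L_i \cap g\big(\bigcap_{j\in J}L'_j\big) = L_I \cap g(L'_J)$; by Proposition~\ref{P:intersectingBalls} (applied to the two finite collections $\{L_I\}$ and $\{L'_J\}$) these are all $\WDC$ for a.e.\ $g\in\G_d$, so $M\cap g(K)\in\UWDCG$, hence $\UWDC$, for a.e.\ $g$.

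The second step expresses all the relevant curvature measures via the inclusion--exclusion formula \eqref{E:N_M via additivity} for the normal cycle, which passes to the curvature measures by linearity of $T\mapsto (T\llc(F\times\er^d))(\varphi_k)$:
\begin{equation*}
C_k(M,U) = \sum_{n}(-1)^{n+1}\sum_{|I|=n} C_k(L_I,U),\qquad
C_k(K,V) = \sum_{m}(-1)^{m+1}\sum_{|J|=m} C_k(L'_J,V),
\end{equation*}
and likewise, using that $N_{M\cap g(K)} = \sum_{A}(-1)^{|A|+1}\sum_{\text{multi-indices}} N_{L_I\cap g(L'_J)}$ (for a.e.\ $g$, by Lemma~\ref{l:globalNormalCycle} applied to the $\UWDCG$ set $M\cap g(K)$, after re-indexing the double family $\{L_i\cap g(L'_j)\}$ as a single finite family and checking that its multi-index intersections are exactly the $L_I\cap g(L'_J)$),
\begin{equation*}
C_k\big(M\cap g(K),\,U\cap g(V)\big) = \sum_{I,J} (-1)^{|I|+|J|} \, C_k\big(L_I\cap g(L'_J),\,U\cap g(V)\big).
\end{equation*}
Now integrate this identity over $\G_d$. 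For each fixed pair $(I,J)$, the sets $L_I$ and $L'_J$ are compact $\WDC$, so Theorem~\ref{thm:kinematicForWDC} applies and gives
\begin{equation*}
\int_{\G_d} C_k\big(L_I\cap g(L'_J),\,U\cap g(V)\big)\,dg = \sum_{i+j=d+k}\gamma_{d,i,j}\, C_i(L_I,U)\, C_j(L'_J,V).
\end{equation*}
Summing over $(I,J)$ with the signs $(-1)^{|I|+|J|}$ and using that the sign factorises, the right-hand side becomes $\sum_{i+j=d+k}\gamma_{d,i,j}\big(\sum_I (-1)^{|I|+1}C_i(L_I,U)\big)\big(\sum_J (-1)^{|J|+1}C_j(L'_J,V)\big) = \sum_{i+j=d+k}\gamma_{d,i,j}\,C_i(M,U)\,C_j(K,V)$ by the inclusion--exclusion formulas above, which is precisely \eqref{eq:KF}.

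The step I expect to be the main obstacle is the measurability and integrability needed to justify interchanging the finite sum with the integral and to make sense of $g\mapsto C_k(M\cap g(K),U\cap g(V))$ as an integrable function on $\G_d$: one must check that the ``bad'' set of $g$ (where some $L_I\cap g(L'_J)$ fails to be $\WDC$, or where the additivity formula \eqref{E:N_M via additivity} for $M\cap g(K)$ is not valid) is $\gamma_d$-null, and that the integrand is dominated so that Fubini applies term by term. The first is handled by Proposition~\ref{P:intersectingBalls} together with the finiteness of the index families; the second follows because each $\int_{\G_d} C_k(L_I\cap g(L'_J),\,U\cap g(V))\,dg$ is finite by Theorem~\ref{thm:kinematicForWDC} (indeed it equals a finite explicit expression), so the finite signed sum is integrable and the interchange is legitimate. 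A minor additional point is to verify that when $M\cap g(K)$ is re-expressed as a $\UWDCG$ union, its multi-index intersection sets coincide (up to discarding empty ones, which contribute zero normal cycle) with the $L_I\cap g(L'_J)$, so that \eqref{E:N_M via additivity} indeed produces the claimed signed sum; this is a purely combinatorial bookkeeping check of the kind already carried out in the proof of Theorem~\ref{thm:UWDChasNC}.
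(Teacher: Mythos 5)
Your core computation (inclusion--exclusion for the normal cycle, followed by a termwise application of the $\WDC$ kinematic formula and refactoring of the signs) is exactly the engine of the paper's proof, and your direct derivation of $C_k(M\cap g(K),\cdot)=\sum_{I,J}(-1)^{|I|+|J|}C_k(M_I\cap g(K_J),\cdot)$ by re-indexing the double family is a legitimate variant of the paper's route through $C_k(M\cup g(K),\cdot)$. The problem is in your first step. The decomposition produced in the proof of Theorem~\ref{thm:UWDChasNC} gives $M=\bigcup_{|J|=1}L_J$ with $L_J=M\cap B_J$, and the multi-index intersections satisfy $L_I\cap L_J=L_{I\cup J}$; these sets are only $\UWDCG$ (each is a finite union $\bigcup_i B_J\cap M_i^j$ of $\WDC$ sets with $\WDC$ intersections), \emph{not} $\WDC$. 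Your own parenthetical ``each $L_I$ is in fact $\UWDCG$, but the multi-index intersection sets are $\WDC$'' is self-contradictory: the multi-index intersections \emph{are} the $L_I$, and a $\UWDCG$ set need not be $\WDC$ (two tangent discs, as noted after Theorem~\ref{T:pokornyRatajLocWDC}). Consequently the pivotal step --- applying Theorem~\ref{thm:kinematicForWDC} to each pair $(L_I,L'_J)$ --- is not justified, since that theorem requires both arguments to be $\WDC$. It is also not known (and not claimed in the paper) that every compact $\UWDC$ set admits a global decomposition with all multi-index intersections $\WDC$, i.e.\ that compact $\UWDC$ implies $\UWDCG$.

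The paper avoids this by proving the formula for $M,K\in\UWDCG$, where the $M_I$, $K_J$ genuinely are compact $\WDC$ and Theorem~\ref{thm:kinematicForWDC} applies termwise. To repair your argument for general compact $\UWDC$ sets you need a two-level induction: first establish \eqref{eq:KF} for pairs of $\UWDCG$ sets (this is the paper's computation), then run your inclusion--exclusion over the $\UWDCG$ pieces $L_I$, $L'_J$ of $M$ and $K$, invoking the already-proved $\UWDCG$ case (rather than Theorem~\ref{thm:kinematicForWDC}) for each pair $(L_I,L'_J)$, after checking via Proposition~\ref{P:intersectingBalls} that $L_I\cap g(L'_J)\in\UWDCG$ for a.e.\ $g$. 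A secondary, smaller point: the sign identity you need when collapsing the double-indexed family, namely $\sum_{S\subseteq I\times J}(-1)^{|S|+1}=(-1)^{|I|+|J|}$ with the sum over $S$ projecting onto both $I$ and $J$, is true but deserves a proof; the paper's detour through $M\cup g(K)$ is precisely a way of not having to verify it.
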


\begin{proof}
We first prove the case $M=\bigcup_{i=1}^{p} M_i\in\UWDCG(\er^d)$ and 
	$K=\bigcup_{j=1}^{q} K_j\in\UWDCG(\er^d)$.
	Denote as usual $M_I\coloneqq \bigcap_{i\in I} M_i$, $I\in\Sigma_p$, and 
	$K_J\coloneqq \bigcap_{j\in J} K_j$, $J\in\Sigma_q$.
	%Denote also $M_\varnothing=M$ and $K_\varnothing=K$.
	First note that the set $g(K)$ is again $\UWDCG$ for every Euclidean motion $g$.
	This is because 
	$g(K)=\bigcup_{j=1}^{q} g(K_j)$ and 
	$\bigcap_{j\in J} g(K_j)=g(K_J)\in\WDC$, $J\in\Sigma_q$.
	
Next we prove that $M\cap g(K), M\cup g(K)\in\UWDCG$ for almost every $g\in\G_d$.
%Denote $K_i^g\coloneqq g(K_i)$ for $g\in\G_d$.
Proposition~\ref{P:intersectingBalls} applied for $\cM\coloneqq \{M_I:I\in\Sigma_p\}$ and $\K\coloneqq \{K_J:J\in\Sigma_q\}$ implies that there is $G_0\subset\G_d$ of full measure such that $M_I\cap g(K_J)\in\WDC$ whenever $g\in G_0$, $I\in\Sigma_p$, $J\in\Sigma_q$.
Since $M\cap g(K)=\bigcup_{i,j}M_i\cap g(K_j)$ one can easily see that $M\cap g(K)\in\UWDCG$, $g\in G_0$, since every intersection of a nonempty and finite collection of the sets $M_i\cap g(K_j)$ is of the form $K_I\cap g(K_J)\in\WDC$.
Similarly, $M\cup g(K)\in\UWDCG$ since
$M\cup g(K)=\left(\bigcup_{i}M_i\right)\cup \left(\bigcup_{j}g(K_j)\right)$
and again every intersection of a nonempty and finite collection of the sets in the union is of the form $M_I$, $g(K_J)$ or $M_I\cap g(K_J)$, $I\in\Sigma_p$, $J\in\Sigma_q$, which are all $\WDC$ sets.
Note that this can be also expressed in a more convenient way (which we will also use later) by $M_I\cap g(K_J)\in\WDC$, $I\in\Sigma^0_p$, $J\in\Sigma^0_q$, $|I|+|J|\geq 1$, where we put $M_\varnothing=K_\varnothing\coloneqq \er^d$.

	Denote
	\begin{equation*}
	L\coloneqq \int_{{\cal G}_d}C_k(M\cap g( K),U\cap g (V))\, dg
	\quad\text{and}\quad
	P\coloneqq \sum_{i+j=d+k}\gamma_{d,i,j}C_i(M,U)C_j(K,V).
	\end{equation*}
We want to prove $L=P$.
%	By \eqref{E:N_M via additivity} we know that
%	\begin{equation}
%	N_M=\sum\limits_{n=1}^{N}(-1)^{n+1}\sum\limits_{|I|=n} N_{M_I}.
%	\end{equation}
	Applying $\varphi_k$ to both sides of \eqref{E:N_M via additivity} we obtain
	\begin{equation*}
	C_k(M,\cdot)=\sum\limits_{n=1}^{p}(-1)^{n+1}\sum\limits_{|I|=n} C_k(M_I,\cdot)=\sum_{\substack{I\in \Sigma_p}}
	(-1)^{|I|+1}C_k(M_I,\cdot)
	\end{equation*}
	and
	\begin{equation*}
	C_k(K,\cdot)=\sum_{\substack{J\in \Sigma_q}}.
	(-1)^{|J|+1}C_k(K_J,\cdot),
	\end{equation*}
	Similarly, one can also see that
	\begin{equation*}
	C_k(M\cup g (K),\cdot)=\sum_{\substack{I\in \Sigma_p^0, J\in\Sigma_q^0 ,\\|I|+ |J|\geq 1}}
	(-1)^{|I|+|J|+1}C_k(M_I\cap g(K_J),\cdot),
	\end{equation*}
	whenever $g\in G_0$.
	Hence,
	\begin{equation*}
	\begin{aligned}
	L&=\int_{{\cal G}_d}C_k(M,U\cap g (V))\, dg
	 +\int_{{\cal G}_d}C_k(g (K),U\cap g (V))\, dg\\
	 &-\int_{{\cal G}_d}C_k(M\cup g (K),U\cap g( V))\, dg\\
%	 &=\sum_{\substack{I\in \Sigma_p ,\\|I|\geq 1}}
%	 (-1)^{|I|+1}\int_{{\cal G}_d}C_k(M_{I},U\cap g V)\, dg
%	  +\sum_{\substack{J\in \Sigma_q ,\\|J|\geq 1}}
%	   (-1)^{|J|+1}\int_{{\cal G}_d}C_k((g K)_{I},U\cap g V)\, dg\\
	 %&-\int_{{\cal G}_d}C_k(M\cup g K,U\cap g V)\, dg\\
	 &=\int_{{\cal G}_d}
	 \sum_{\substack{I\in \Sigma_p}}
	  (-1)^{|I|+1}\;C_k(M_{I},U\cap g( V))
	 +\sum_{\substack{J\in \Sigma_q}}
	 (-1)^{|J|+1}C_k(g (K_{J}),U\cap g (V))\, dg\\
	 &-\int_{{\cal G}_d}C_k(M\cup g (K),U\cap g (V))\, dg.
	\end{aligned}
	\end{equation*}
	Also
	\begin{equation*}
	\begin{aligned}
	P&=\sum_{i+j=d+k}\gamma_{d,i,j}
	\left( \sum_{\substack{I\in \Sigma_p }}
	 (-1)^{|I|+1} C_i(M_I,U)\right)
	\cdot \left( \sum_{\substack{J\in \Sigma_q}}
	 (-1)^{|I|+1} C_j(K_J,V)\right)\\
	 &=\sum_{i+j=d+k}\gamma_{d,i,j}
	\sum_{\substack{I\in \Sigma_p, J\in\Sigma_q }} (-1)^{|I|+|J|}\; C_i(M_I,U)\cdot C_j(K_J,V)\\
	&=\sum_{\substack{I\in \Sigma_p, J\in\Sigma_q}} (-1)^{|I|+|J|}
	\sum_{i+j=d+k}\gamma_{d,i,j}\; C_i(M_I,U)\cdot C_j(K_J,V)\\
	&=\sum_{\substack{I\in \Sigma_p, J\in\Sigma_q }} (-1)^{|I|+|J|}
	\int_{{\cal G}_d}C_k(M_I\cap g(K_J),U\cap g (V))\, dg\\
	&=\int_{{\cal G}_d}\sum_{\substack{I\in \Sigma_p, J\in\Sigma_q }}
	(-1)^{|I|+|J|}C_k(M_I\cap g(K_J),U\cap g (V))\, dg,
	\end{aligned}
	\end{equation*}
	Where the second to last equality holds by Theorem~\ref{thm:kinematicForWDC}.
	Moreover, for $g\in G_0$,
	\begin{equation*}
	\begin{aligned}
	C_k(M\cup g (K),\cdot)&=\sum_{\substack{I\in \Sigma_p^0, J\in\Sigma_q^0 ,\\|I|+ |J|\geq 1}}
	(-1)^{|I|+|J|+1}C_k(M_I\cap g(K_J),\cdot)\\
	&=\sum_{\substack{I\in \Sigma_p, J\in\Sigma_q }}
	(-1)^{|I|+|J|+1}C_k(M_I\cap g(K_J),\cdot)\,\\
	&+\sum_{\substack{I\in \Sigma_p }}
	(-1)^{|I|+1}\;C_k(M_{I},\cdot)
	+\sum_{\substack{J\in \Sigma_q}}
	(-1)^{|J|+1}C_k(g(K_J),\cdot),
	\end{aligned}
	\end{equation*}
	and applying this to $U\cap g(V)$ and integrating over $\G_d$ we obtain that $L=P$ which concludes the proof.
\end{proof}
\color{black}

\section{$\UWDC$ sets in plane}\label{sec:inPlane}

In this section we aim to provide a simple geometric characterization of compact $\WDC$ sets in $\er^2$ (Theorem~\ref{T:characterizationUWDC}). We start with some simple observations and few definitions.

\color{black}
\begin{prop}\label{P:numberOfComponentsBound}
	Let $A\subset\er^d$ and let $B_1,\dots, B_N$ be a finite covering of $A$.
	Suppose that $A\cap B_i$ has finitely many connected components relatively in $A\cap B_i$ for every $i=1,\dots,N$.
	Then $A$ has finitely many connected components.
\end{prop}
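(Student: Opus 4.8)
The plan is to bound the number of connected components of $A$ by the finite number $\sum_{i=1}^{N} c_i$, where $c_i$ denotes the number of connected components of $A\cap B_i$. This is a purely topological argument; nothing about $\er^d$ beyond the ambient topology is used, and the phrase ``relatively in $A\cap B_i$'' loses no generality because connectedness (hence the decomposition into components) is an intrinsic notion: a subset $S\subset A\cap B_i$ is connected as a subspace of $A\cap B_i$ if and only if it is connected as a subspace of $A$ (equivalently of $\er^d$).

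First I would recall the elementary fact that, in any topological space $X$, each connected subset $S\subset X$ is contained in exactly one connected component of $X$: if $x\in S$ and $C_x$ is the component of $x$, then maximality of $C_x$ together with connectedness of $S$ forces $S\subset C_x$. Applying this with $X=A$ and $S$ ranging over the components of $A\cap B_i$, every component of $A\cap B_i$ is contained in a uniquely determined component of $A$.

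Next I would show that distinct components of $A$ meeting $A\cap B_i$ contain distinct components of $A\cap B_i$. Indeed, if $C$ is a component of $A$ and $x\in C\cap B_i$, let $D$ be the component of $x$ in $A\cap B_i$; then $D$ is connected, so by the previous paragraph $D$ lies in the unique component of $A$ containing $x$, namely $C$, i.e. $D\subset C$. Since distinct components of $A$ are disjoint, the assignment $C\mapsto D$ is injective on the set of components of $A$ that meet $A\cap B_i$, so that set has at most $c_i$ elements.

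Finally, since $B_1,\dots,B_N$ cover $A$, every point of $A$ — in particular every point of every component of $A$ — belongs to some $A\cap B_i$; hence each component of $A$ meets at least one of the sets $A\cap B_1,\dots,A\cap B_N$. Combining this with the previous step gives that the total number of components of $A$ is at most $\sum_{i=1}^{N} c_i<\infty$, which is the claim. There is no serious obstacle in this argument; the only point requiring a word of care is the intrinsic nature of ``connected component'', which is what makes the hypothesis stated ``relatively in $A\cap B_i$'' equivalent to the seemingly stronger ambient statement.
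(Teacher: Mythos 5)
Your argument is correct, and the overall counting strategy is the same as the paper's: both proofs set up a correspondence between the (finitely many) components of the pieces $A\cap B_i$ and the components of $A$, and conclude $|\A|\leq\sum_i c_i<\infty$. The difference lies in the mechanism used to show that each component of $A\cap B_i$ singles out a unique component of $A$. The paper chooses, for each component $D$ of $A$, a relatively open set $U_D\supset D$ with the $U_D$ pairwise disjoint, and then argues that each connected piece meets exactly one $U_D$; you instead invoke the standard maximality property of connected components (a connected subset of $A$ lies in exactly one component of $A$), together with the intrinsic nature of connectedness, and obtain for each $i$ an injection from the components of $A$ meeting $B_i$ into the components of $A\cap B_i$. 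Your route is the more robust one: the existence of pairwise disjoint relatively open neighbourhoods of \emph{all} components simultaneously is not automatic for an arbitrary subspace of $\er^d$ (components need not be open, and need not be separated by disjoint open sets --- consider $\{0\}\cup\{1/n:n\in\en\}$), so the paper's choice of the $U_D$ is an unjustified step that your argument simply does not need. In short: same counting skeleton, but your key lemma (maximality of components) is more elementary and closes a soft spot in the published proof.
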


\begin{proof}
	Let $C_i^j$ be the connected components of $A\cap B_i$ and denote the system of all $C_i^j$ by $\C$.
	Let $\A$ be the system of all connected components of $A$. For each $D\in\A$ pick some open (relatively in $A$) set $U_D$ such that $D\subset U_D$ and that $U_D\cap U_E=\varnothing$ whenever $D,E\in\A$, $D\not= E$.
	Since every $C\in \C$ is connected and the system $\U\coloneqq\{U_D:D\in\A \}$ is an open covering  (relatively in $A$) of $A$ we know that there is a unique $U^C\in \U$ such that $U^C\cap C\not=\varnothing$.
	Moreover, for each $U\in\U$ there is at least one $C\in\C$ such that $C\cap U\not=\varnothing$ (this is because the system $\C$ is a covering of $A$) and so the mapping $C\mapsto U^C$ maps $\C$ onto $\U$.
	Hence, $|\A|=|\U|\leq|\C|<\infty$.
\end{proof}

We will also use the following easy fact which we state without a proof.

\begin{prop}\label{P:numberOfComponentsBound2}
	Let $A$ be a subset of a metric space $M$ such that $A\cap C\not=\varnothing$ for every $C$ a connected component of $M$.
	If $A$ has $N$ connected components, then $M$ has at most $N$ connected components.
\end{prop}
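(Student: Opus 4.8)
The plan is to exhibit a surjection from the set of connected components of $A$ onto the set of connected components of $M$; since the domain has cardinality $N$, the codomain then has cardinality at most $N$, which is the assertion. (If $A=\varnothing$ the hypothesis forces $M=\varnothing$ and the claim is trivial, so we may assume $A\neq\varnothing$, though the argument below does not actually need this.)

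First I would invoke the standard topological fact that every connected subset of a space lies in exactly one of its connected components. For a connected component $D$ of $A$, note that $D$ is connected in the subspace topology of $A$, hence connected as a subset of $M$ as well, since connectedness of $D$ depends only on the topology $D$ carries as a set, which is the same whether computed through $A$ or through $M$. Thus there is a unique connected component $\Phi(D)$ of $M$ with $D\subset\Phi(D)$, and this defines a map $\Phi$ from the components of $A$ to the components of $M$.

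Next I would verify surjectivity of $\Phi$. Let $C$ be any connected component of $M$. By hypothesis $A\cap C\neq\varnothing$; pick $a\in A\cap C$ and let $D$ be the connected component of $A$ containing $a$. Then $D$ is a connected subset of $M$ meeting $C$, so $D\subset C$, whence $\Phi(D)=C$. Consequently the number of connected components of $M$ is at most the number of connected components of $A$, i.e. at most $N$.

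No step here is delicate; the only point worth stating with care is the remark that a subset of $A$ is connected relative to $A$ if and only if it is connected relative to $M$, so that components of $A$ may legitimately be treated as connected subsets of $M$.
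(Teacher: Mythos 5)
Your proof is correct. The paper states this proposition explicitly without proof (``which we state without a proof''), and your argument --- mapping each component of $A$ to the component of $M$ containing it and using the hypothesis $A\cap C\neq\varnothing$ to get surjectivity --- is the standard and evidently intended one, closely mirroring the surjection argument the paper does write out for the companion Proposition~\ref{P:numberOfComponentsBound}.
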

\color{black}

\color{black}
Let us recall some definitions and notation from \cite[Definition~7.4]{PRZ18}.
If $z\in \R^2$ and  $v\in \sph^1$ we denote by   $\gamma_{z,v}$ the unique orientation preserving isometry on $\er^2$ that maps $0$ to $z$ and $(1,0)$ to $z+v$. 
If $K\subset \er$ and $f:K\to\er$ is a function, then $\hyp f$ and $\epi f$ will be used for hypograph and epigraph of $f$, respectively;
\begin{equation*}
\hyp f\coloneqq \{(x,y)\in\er^2: x\in K,\ y\leq f(x)\},\ \ \ \epi f\coloneqq \{(x,y)\in\er^2: x\in K, \ y\geq f(x)\}.
\end{equation*}

Further, for $u>0$, $s\in(0,\infty]$, $z\in\er^2$ and $v\in \sph^1$, we define
\begin{equation*}
A_s^u\coloneqq  \left\{(x,y): 0\leq x< s, -xu\leq y\leq xu \right\}\ \ \text{and}\ \ A_s^u(z,v)\coloneqq \gamma_{z,v}(A_s^u).
\end{equation*}

\begin{definition}{(\cite[Definition~7.4]{PRZ18})} \label{types}
	Let $M \subset \R^2$ and $r, u>0$. We say that
	\begin{enumerate}
		\item[(i)]
		$M$ is a {\it $\tilde T_{r,u}^1$-set} if $M\cap A_{r}^{2u}=\{0\}$.
		\item[(ii)]
		$M$ is a {\it $\tilde T_{r,u}^2$-set} if   $M\supset A_{r}^{2u}$.
		\item[(iii)]
		$M$ is a  {\it $\tilde T_{r,u}^3$-set} if there is a DCR function $U:[0,r)\to\er$ such that $U'_{+}(0)=0$, $\graph U\subset A_{r}^{u}$ and 
		$M\cap A_{r}^{2u}=\hyp U\cap A_{r}^{2u}$.
		\item[(iv)]
		$M$ is a  {\it $\tilde T_{r,u}^4$-set} if  there is a DCR function $L:[0,r)\to\er$ such that $L'_{+}(0)=0$, $\graph L\subset A_{r}^{u}$ and 
		$	M\cap A_{r}^{2u}=\epi L\cap A_{r}^{2u}$.
		\item[(v)]
		$M$ is a  {\it $\tilde T_{r,u}^5$-set} if there are DCR functions $U,L:[0,r)\to\er$ such that $L\leq U$ on $[0,r]$, $U'_{+}(0)=L'_{+}(0)=0$,
		$\graph U,\graph L\subset A_{r}^{u}$, and 
		$M\cap A_{r}^{2u}=\hyp U\cap\epi L$.
		\item[(vi)]
		$M$ is of type $T^i$ ($i=1,2,3,4,5$) at $x\in M$ in direction $v \in \sph^1$ if the preimage $(\gamma_{x,v})^{-1}(M)$ is a
		$\tilde T_{r,u}^i$-set for some $r,u>0$.
	\end{enumerate}
\end{definition}

Then we have (see \cite[Lemma~7.8]{PRZ18})
\begin{lem}\label{L:locpie}
	Let $M$ be a closed locally $\WDC$ set in $\er^2$, $x\in\partial M$ and $v\in \sph^1$.
	Then there exists $1\leq i \leq 5$ such that $M$ is of type $T^i$ at $x$ in direction $v$.
\end{lem}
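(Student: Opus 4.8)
The plan is to bring the data into a normal form, describe $M$ near the point through the finitely many DC curves of $\partial M$ that pass through it, and read off which of $\tilde T^1,\dots,\tilde T^5$ occurs, the weak regularity being precisely what forces the local shape to be one of these five. Since $\gamma_{x,v}$ is a Euclidean isometry and being (locally) $\WDC$ is preserved by isometries, we may replace $M$ by $(\gamma_{x,v})^{-1}(M)$ and assume $x=0$, $v=(1,0)$, so that $0\in\partial M$. By local $\WDC$-ness there are $\rho_0>0$ and a compact $\WDC$ set $A=f^{-1}(\{0\})$, with $f\colon\er^2\to[0,\infty)$ DC having $0$ as a weakly regular value, such that $M$ and $A$ agree on $B(0,\rho_0)$. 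It suffices to produce $r,u>0$ with $M\cap A_r^{2u}$ a $\tilde T^i_{r,u}$-set for some $1\le i\le5$.

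\emph{Step 1: reduction to almost-horizontal DC graphs.} Invoking the local structure of planar $\WDC$ sets (a consequence of the structure theory of \cite{PRZ18}, which is where weak regularity of $f$ is first used) together with Remark~\ref{rem:DCgraphProperties}, we obtain $\rho\in(0,\rho_0]$ and finitely many DC graphs $P_1,\dots,P_m\ni0$, each with $\Tan(P_k,0)$ a line through $0$, such that $\partial M\cap B(0,\rho)\subset\bigcup_{k}P_k$. Every $P_k$ with $\Tan(P_k,0)\neq\spa\{(1,0)\}$ leaves each sufficiently narrow cone about the positive $x$-axis at once, so after shrinking $u$ we may discard those and assume $\Tan(P_k,0)=\spa\{(1,0)\}$ for the remaining ones. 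For each of them $(0,1)\notin\Tan(P_k,0)$, so Lemma~\ref{L:zestr} presents $P_k$ near $0$ as $\graph g_k$ with $g_k$ DCR; horizontality of $\Tan(P_k,0)$ forces $g_k(0)=0$ and $(g_k)'_+(0)=0$. Restricting to $[0,r)$ and shrinking $r$, then $u$, we may assume $\graph g_k|_{[0,r)}\subset A_r^u$ for all $k$, so $\partial M\cap\intr A_r^{2u}\subset\bigcup_k\graph g_k|_{(0,r)}$.

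\emph{Step 2: the local shape is one clean region.} For $t\in(0,r)$ let $S_t:=\{y:(t,y)\in M\cap A_r^{2u}\}$, a closed subset of $[-2ut,2ut]$ with relative boundary in the finite set $\{g_k(t):k\}\cup\{\pm2ut\}$, hence a finite union of closed subintervals. The heart of the matter is to show that, after one more shrinking of $r$, for every $t\in(0,r)$ either $S_t=\varnothing$, or $S_t=[-2ut,2ut]$, or $S_t$ is a single closed subinterval $[L(t),U(t)]$, with the same alternative for all such $t$. A component of $A^c$ inside $A_r^{2u}$ trapped between two of the $g_k$, or (in the third case) a spurious extra component of $S_t$, would pinch to the apex tangentially to $(1,0)$; on such a set $f>0$, $f$ vanishes on its boundary and is Lipschitz near $0$, so the maximum $\Phi(t)$ of $f$ over its $t$-slice is $o(t)$, while a slicewise maximiser $p_t\to0$ carries a subgradient $\xi\in\partial f(p_t)$ whose second coordinate vanishes and with $|\xi|\ge\eps$ by weak regularity, hence $|\partial_1 f|\ge\eps$ along a curve on which $f$ decays only like $o(t)$ --- a contradiction. (This is exactly the mechanism by which $\{|y|\ge x^2\}$ fails to be $\WDC$ although $\{|y|\le x^2\}$ does not.) A short connectedness argument, using that $M$ is closed and that the $g_k$ stay strictly inside $A_r^u$, shows that each of $L$ and $U$ is either the wedge boundary $t\mapsto\pm2ut$ on all of $(0,r)$ or an interior curve on all of $(0,r)$; in the interior case $L$ and $U$ are continuous selections among the finitely many DCR functions $g_k$ and so are DCR by Lemma~\ref{L:Mixing}, with $L(0)=U(0)=0$, $L'_+(0)=U'_+(0)=0$ and $\graph L,\graph U\subset A_r^u$.

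\emph{Step 3: conclusion, and the main obstacle.} If $S_t\equiv\varnothing$ then $M\cap A_r^{2u}=\{0\}$ (type $\tilde T^1$); if $S_t\equiv[-2ut,2ut]$ then $A_r^{2u}\subset M$ (type $\tilde T^2$); otherwise $M\cap A_r^{2u}=\{(t,y):0\le t<r,\ L(t)\le y\le U(t)\}$, which equals $\hyp U\cap A_r^{2u}$ if $L$ is the lower wedge boundary (type $\tilde T^3$), $\epi L\cap A_r^{2u}$ if $U$ is the upper wedge boundary (type $\tilde T^4$), and $\hyp U\cap\epi L$ if both are interior curves (type $\tilde T^5$). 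In each case $M$ is of type $T^i$ at $0$ in direction $(1,0)$ for some $1\le i\le5$, and transporting back by $\gamma_{x,v}$ proves the lemma. The decisive point is Step 2 --- upgrading ``$\partial M$ is locally a finite union of DC graphs'' to the clean single-region picture --- and it relies entirely on using the weak regularity of $f$ to forbid $M$ or its complement from pinching to the apex tangentially to $v$; making the structural input of Step 1 self-contained is the other substantial ingredient, and belongs to the planar $\WDC$ structure theory of \cite{PRZ18}.
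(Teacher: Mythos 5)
The paper offers no proof of Lemma~\ref{L:locpie}: it is quoted verbatim from \cite[Lemma~7.8]{PRZ18}, so there is no in-paper argument to compare yours against. Judged on its own terms, your proposal is a plausible reconstruction of the shape of the argument, but it is not a proof. The decisive problem is Step~1: the assertion that $\partial M\cap B(0,\rho)$ is contained in finitely many DC graphs through $0$ is itself a central piece of the planar $\WDC$ structure theory of \cite{PRZ18}, established there essentially alongside (not prior to) the type classification you are trying to derive. Invoking it as a black box makes the argument circular relative to the task of proving the cited lemma; if one is allowed to import that much of \cite{PRZ18}, one may as well import Lemma~7.8 itself, which is exactly what the paper does. (The uses of Lemma~\ref{L:zestr}, Remark~\ref{rem:DCgraphProperties} and Lemma~\ref{L:Mixing} downstream of that input are fine.)

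The second problem is that Step~2, which you correctly identify as the heart of the matter, is only a heuristic. Two concrete gaps: (i) the contradiction with weak regularity is not closed --- knowing that at each slice maximizer $p_t$ some $\xi\in\partial f(p_t)$ has $\xi_2=0$ and $|\xi_1|\ge\eps$ does not by itself contradict $\Phi(t)=o(t)$; you need a Danskin-type identity relating $\Phi'(t)$ to the first coordinates of subgradients at maximizers, together with the observation that $\Phi(t)=o(t)$ forces $\Phi'(t_n)\to 0$ along some sequence, and both steps have to be justified for Clarke subdifferentials of DC functions rather than for smooth ones; (ii) the claim that ``the same alternative holds for all $t\in(0,r)$'' is asserted, not proved --- it must exclude, for instance, $M$ accumulating at the apex in infinitely many disjoint pieces along the axis, so that $S_t$ alternates between empty and nonempty as $t\downarrow 0$; that configuration is not reached by the vertical-pinching mechanism you describe and requires a separate application of weak regularity in the horizontal direction. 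Until Step~1 is made independent of \cite{PRZ18} and these two points in Step~2 are carried out, the proposal is an outline rather than a proof.
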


For the purpose of characterising $\UWDC(\er^2)$ sets we will use the following version of the definition above. 
\begin{definition}\label{typesNew}
	Let $M \subset \R^2$ and $r, u>0$. We say that
	\begin{itemize}
	\item
	$M$ is a {\it $\tilde \T_{r,u}^1$-set} if $M\cap A_{r}^{2u}=\{0\}$.
	\item
	$M$ is a {\it $\tilde \T_{r,u}^2$-set} if $M\supset A_{r}^{2u}$.
	\item
	$M$ is a {\it $\tilde \T_{r,u}^3$-set} if $\partial M\cap A_r^{2u}\subset A_r^{u}$ and
	there are $n\in\en$ and DCR functions $f_1,\dots,f_n:[0,r)\to\er$ such that $f_i\leq f_{i+1}$ on $[0,r)$, $i=1,\dots,n-1$, $(f_i)'_+(0)=0$, $i=1,\dots,n$ and such that 
	\begin{equation}\label{eq:T3covering}
	\partial M\cap A_r^{u}=\bigcup_{i} \graph f_i
	\end{equation}
	and
	\begin{equation}\label{eq:T3sausages}
	\{x\in(0,r):\; f_{i}(x)=f_{i+1}(x)\}\not=\varnothing\;\implies\hyp f_{i+1}\cap\epi f_i\subset M,
	\end{equation}
	$i=1,\dots,n-1$.
	\item $M$ is of type $\T^i$ ($i=1,2,3$) at $x\in M$ in a direction $v \in \sph^1$ if the preimage $(\gamma_{x,v})^{-1}(M)$ is a 
	$\tilde \T_{r,u}^i$-set for some $r,u>0$.
\end{itemize}
\end{definition}

\begin{remark}\label{rem:oTypech}
	\begin{enumerate}[label={ (\alph*)}]
		\item\label{item:typeUnique}
		Clearly, if $M$ is a $\tilde \T_{r,u}^i$-set (resp. of  type $\T^i$  at $x$ in direction $v$), then $i$ is uniquely determined.
		\item\label{item:rFreeInType}
		If $M$ is a $\tilde \T_{r,u}^i$-set, then  $M$ is a $\tilde T_{r',u}^i$-set for every $r'<\delta$.
		\item\label{item:uFreeInType}
		If $M$ is a $\tilde \T_{r,u}^i$-set and $u>u'>0$ and $\delta>0$, then $M$ is a $\tilde T_{r',u'}^i$-set for some $r'<\delta$ (here we use that $(f_i)'_{+}(0)=0$ in Definition~\eqref{typesNew}).
	\end{enumerate}
\end{remark}

\begin{lem}\label{L:typeStableOnIntersection}
	Let $M$ be a closed set in $\er^2$, $x\in M$ and $v\in\sph^1$.
	Suppose that there are closed sets $M_j$, $j=1\dots,N$ with $M=\bigcup_{j=1}^N M_j$ and such that the set $M_I\coloneqq\bigcap_{j\in I} M_j$, $I\in\Sigma_N$, is of type $T_i$ at $x$ in direction $v$ for some $i=1,\dots,5$ for every $I\in\Sigma_N$.
	Then $M$ is of type $\T_i$ at $x$ in direction $v$ for some $i=1,2,3$.
\end{lem}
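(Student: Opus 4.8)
We may work in the coordinates given by $\gamma_{x,v}$, so we assume $x=0$, $v=(1,0)$, and each $M_I$ is a $\tilde T^{i_I}_{r_I,u_I}$-set for some $i_I\in\{1,\dots,5\}$; by Remark~\ref{rem:oTypech}\ref{item:rFreeInType} and \ref{item:uFreeInType} (and the analogous elementary properties of the $\tilde T^i$-sets from \cite{PRZ18}) we may pass to common parameters $r,u>0$ that work simultaneously for all $I\in\Sigma_N$. The first reduction is to understand $M$ near $0$ inside the cone $A^{2u}_r$: since $M=\bigcup_j M_j$, a point of $A^{2u}_r$ lies in $M$ iff it lies in some $M_j$, and each $M_j$ is one of the five types. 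The plan is to do a case analysis on which types occur among the $M_j$'s and, crucially, to use the hypothesis that \emph{all} intersections $M_I$ are of one of the five types — this is what rules out pathological configurations (e.g. two graphs crossing transversally, whose intersection would be a single point together with a piece of graph, not of any type $T^i$).

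First I would dispose of the easy cases. If some $M_j$ is of type $T^2$ (contains $A^{2u}_r$) then $M\supset A^{2u}_r$, so $M$ is of type $\T^2$. If every $M_j$ is of type $T^1$ (i.e. $M_j\cap A^{2u}_r=\{0\}$) then $M\cap A^{2u}_r=\{0\}$ and $M$ is of type $\T^1$. In the remaining case no $M_j$ is type $T^2$, and at least one $M_j$ is of type $T^3,T^4$ or $T^5$; I claim $M$ is of type $\T^3$. Here the key structural fact to extract is: for each $M_j$ of type $T^3,T^4,T^5$, the boundary $\partial M_j\cap A^u_r$ is a union of one or two DCR graphs of functions $[0,r)\to\er$ with zero right derivative at $0$ (the functions $U$ and/or $L$), and $M_j\cap A^{2u}_r$ is the region of $A^{2u}_r$ cut out by these graphs (hypograph, epigraph, or the band between them); for $M_j$ of type $T^1$ we get no graph. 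So altogether we collect a finite family $g_1,\dots,g_m$ of DCR functions on $[0,r)$ with $(g_k)'_+(0)=0$, arising as the boundary functions of the various $M_j$, and $M\cap A^{2u}_r$ is a finite union of hypographs/epigraphs/bands relative to these graphs.

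The heart of the argument is then to show this union is itself of type $\T^3$, i.e. to produce the ordered list $f_1\le\dots\le f_n$ with \eqref{eq:T3covering} and \eqref{eq:T3sausages}. The natural approach: the graphs $\graph g_k$ partition a punctured neighbourhood of $0$ in $A^{2u}_r$ into finitely many ``strips''; on each strip $M$ is locally constant (either entirely in $M$ or disjoint from $M^\circ$), because membership in each $M_j$ is determined by position relative to $g_k$'s. The condition that all the $M_I$ are of type $T^i$ forces the graphs to be \emph{non-crossing} in the strong sense needed — when two boundary functions $g_k,g_\ell$ touch, near the touching point they must coincide or stay ordered, and the region between them must lie in $M$ (this is exactly where \eqref{eq:T3sausages}-type behaviour of the individual $M_j$'s, plus the hypothesis on pairwise intersections $M_j\cap M_{j'}$, is used). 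One then sorts the distinct functions appearing as actual pieces of $\partial M$ into $f_1\le\dots\le f_n$ by their values on $(0,r)$; Lemma~\ref{L:Mixing} (applied on suitable subintervals where the ordering of the $g_k$'s is fixed) guarantees these sorted functions are still DCR, after shrinking $r$ if necessary so that the finitely many pairwise coincidence sets $\{g_k=g_\ell\}$ have simple enough structure near $0$. Verifying \eqref{eq:T3covering} is bookkeeping once one knows $\partial M\cap A^u_r\subset\bigcup_k\graph g_k$ (which needs $\partial M\cap A^{2u}_r\subset A^u_r$, inherited from the individual types after shrinking $u$), and \eqref{eq:T3sausages} follows from the strip-wise constancy of $M$ together with the sausage conditions for the individual $M_j$'s.

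\textbf{Main obstacle.} The delicate point is the interplay of the boundary graphs near $0$: a priori two DCR functions $g_k,g_\ell$ with zero right-derivative at $0$ can touch infinitely often accumulating at $0$, so ``sorting them'' into a linearly ordered list valid on a whole interval $(0,r)$ is not automatic. I expect the real work is to show that the hypothesis ``every $M_I$ is of type $T^i$'' forces, after shrinking $r$, that any two of the relevant boundary functions are comparable on all of $(0,r)$ (one lies weakly below the other), turning the local picture into a genuinely ordered stack of graphs; Lemma~\ref{L:typeStableOnIntersection}'s proof presumably leverages that $\{g_k=g_\ell\}$ is essentially the boundary behaviour of $M_j\cap M_{j'}$, which being of type $T^i$ constrains it to be either all of a strip or a single point — and a ``single point'' at a non-origin location is incompatible with two DCR graphs meeting there unless they are tangent and stay on one side, which after shrinking gives comparability. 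Nailing this comparability rigorously, and the attendant application of Lemma~\ref{L:Mixing} to conclude DC-ness of the sorted functions, is where I would spend the bulk of the effort.
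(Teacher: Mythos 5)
Your reduction to the case where no $M_j$ is of type $T^1$ or $T^2$ matches the paper, but the heart of the argument --- producing the ordered stack $f_1\le\dots\le f_n$ with \eqref{eq:T3covering} and \eqref{eq:T3sausages} --- is left as a plan, and the key claim you propose to rest it on is false. You assert that the hypothesis forces any two of the boundary functions $g_k,g_\ell$ to become comparable on $(0,r)$ after shrinking $r$, because $\{g_k=g_\ell\}$ reflects the structure of $M_j\cap M_{j'}$. It does not: take $M_1=\hyp U_1\cap\epi 0$ and $M_2=\hyp U_2\cap\epi 0$ with $U_1,U_2>0$ DCR functions crossing each other infinitely often near $0$. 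Every $M_I$ is a $\tilde T^5_{r,u}$-set (the intersection is $\hyp\min(U_1,U_2)\cap\epi 0$), the conclusion holds ($M$ is a $\tilde\T^3_{r,u}$-set with $f_1=0$, $f_2=\max(U_1,U_2)$), yet $U_1$ and $U_2$ are comparable on no interval $(0,r')$ and $\{U_1=U_2\}$ can be an essentially arbitrary closed set. The hypothesis constrains only the intersections of the \emph{bands}, not the coincidence sets of their individual boundary functions. This also breaks the subsequent step: because \eqref{eq:T3covering} demands exact equality $\partial M\cap A^u_r=\bigcup_i\graph f_i$, the $f_i$ cannot be the (pointwise-sorted) family of all the $g_k$ --- in the example $\graph\min(U_1,U_2)$ lies partly in the interior of $M$ and must be discarded --- and your plan contains no mechanism for deciding which min/max combinations survive as boundary, nor for verifying \eqref{eq:T3sausages} between consecutive survivors. (Also, ``shrinking $r$ so that the coincidence sets have simple structure'' is not available for DC functions; fortunately it is not needed, since pointwise sorting preserves DC-ness unconditionally by Lemma~\ref{L:unionf_i=uniong_i}, via Lemma~\ref{L:Mixing}.)

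The paper sidesteps the whole arrangement analysis by induction on $N$, using the correct dichotomy, which lives at the level of bands rather than graphs: for $j\ne k$, $M_{\{j,k\}}$ is either a $\tilde T^1_{r,u}$-set, in which case the two bands are disjoint off $0$ and hence (by connectedness of $(0,r)$) one lies strictly above the other, or a $\tilde T^5_{r,u}$-set, in which case the bands overlap over every $x\in(0,r)$ and $M_j\cup M_k$ is again a single $\tilde T^5_{r,u}$-band with $U=\max(U_j,U_k)$, $L=\min(L_j,L_k)$. In the second case one replaces $M_j,M_k$ by their union, checks that every intersection of the merged family is still of type $T^1$ or $T^5$ (the computation $\tilde M_I=M_{J\cup\{j\}}\cup M_{J\cup\{k\}}$), and applies the induction hypothesis; when no merge is possible the bands are totally ordered, $L_1\le U_1<L_2\le\cdots<L_N\le U_N$ on $(0,r)$, and $M$ is visibly a $\tilde\T^3_{r,u}$-set. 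If you want to salvage your direct approach, you must replace the false comparability claim by this band dichotomy and then still carry out the merging implicitly; at that point you have reproduced the paper's induction.
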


\begin{proof}
	We will prove the lemma by induction with respect to $N$.
	
	The case $N=0$ follows directly from what the definitions of the set of type $T_i$ and $\T_i$.
	So suppose that the statement of the lemma is true for $N$ up to some $n$,
	we will prove that it holds for $N=n+1$ as well.
	
	Without any loss of generality we can assume that $x=0$, $v=(1,0)$ and (by Remark~\ref{rem:oTypech}) that there are $r,u>0$ such that each set $M_I$ is a $\tilde T^i_{r,u}$-set for some $i$ (depending on $I$).
	We can also assume that each $M_j$ is a $\tilde T^i_{r,u}$-set for $i=3,4,5$. Indeed, if some $M_k$ was a $\tilde T^2_{r,u}$-set, then $M$ would be clearly a $\tilde \T^2_{r,u}$-set, and if some $M_k$ was a $\tilde T^1_{r,u}$-set, then $M\cap A_r^{2u}=\left(\bigcup_{j\not=k} M_j\right)\cap A_r^{2u}$ and we can use the induction procedure.
	
	For a simplicity of the notation we will assume that each $M_j$ is a $\tilde T^5_{r,u}$-set, the general case can be proved in a similar manner.
	By the definition a $\tilde T^5_{r,u}$-set we know that there are
	$U_i,L_i:[0,r)\to\er$, $i=1,\dots, N$ such that $L_i\leq U_i$ on $[0,r)$, $(U_i)'_{+}(0)=(L_i)'_{+}(0)=0$,
	$\graph U_i,\graph L_i\subset A_{r}^{u}$, and 
	$M\cap A_{r}^{2u}=\hyp U_i\cap\epi L_i$.
	We also know that each $M_I$ then has to be either a $\tilde T^1_{r,u}$-set, or a $\tilde T^5_{r,u}$-set, with the corresponding functions of the form $U=\min_{j\in I} U_j$ and $L=\max_{j\in I}L_j$.
	
	First note that we can suppose that there are $j,k\in\{1,\dots, N\}$, $j\not=k$, such that $M_{\{j,k\}}$ is a $\tilde T^5_{r,u}$-set.
	If not then we can reindex the sets $M_i$ isn such a way that
	\begin{equation}\label{eq:inequalitiesLU}
	L_1\leq U_1 < L_2\leq\cdots \leq U_{N-1}<L_N\leq U_N
	\end{equation}
	on $(0,r)$, in which case it is easy to verify that $M$ is a $\tilde\T^3_{r,u}$-set, where we define $f_{2j-1}\coloneqq L_j$ and $f_{2j}\coloneqq U_i$, $j=1,\dots N$ (note that \eqref{eq:T3sausages} follows from \eqref{eq:inequalitiesLU}).
	
	So assume that there are $j,k\in\{1,\dots, N\}$, $j\not=k$, such that $M_{\{j,k\}}$ is a $\tilde T^5_{r,u}$-set.
	Possibly reindexing the sets we can assume $j=n$ and $k=n+1$.
	Put $\tilde M_n =M_{n+1}\cup M_n$ and $\tilde M_l=M_l$, $l=1,\dots n-1$.
	We are done (by the induction procedure) if we can prove that each $\tilde M_I$ is a $\tilde T^1_{r,u}$-set or a $\tilde T^5_{r,u}$-set, $I\in\Sigma_n$.
	The only situation we need to check is if $n\in I$ and $|I|\geq 2$.
	Put $J=I\setminus\{n\}\in\Sigma_n$.
	Since $M_{\{n,n+1\}}$ is a $\tilde T^5_{r,u}$-set we know that 
	\begin{equation}\label{eq:inequalitioesUL}
	U_n\geq L_{n+1}\quad\text{and}\quad U_{n+1}\geq L_n,
	\end{equation}
	which, in particular, implies that $\tilde M_n$ is a $\tilde T^5_{r,u}$-set as well with the corresponding functions $U=\max (U_n, U_{n+1})$ and $L=\min (L_n, L_{n+1})$.
	
	If $\tilde M_J$ is a $\tilde T^1_{r,u}$-set we are done since $\tilde M_I$ is then a $\tilde T^1_{r,u}$-set as well.
	If $\tilde M_J$ is not a $\tilde T^1_{r,u}$-set it has to be a $\tilde T^5_{r,u}$-set.
	Put $\tilde U\coloneqq\min_{j\in J} U_j$ and $\tilde L\coloneqq\max_{j\in J}L_j$.
	Then
	\begin{equation}\label{eq:TilUsmallerTilL}
	M_J=\epi \tilde L\cap \hyp \tilde U\quad\text{and}\quad\tilde L\leq\tilde U.
	\end{equation}
	Now,
	\begin{equation}\label{eq:MIexpression}
	\tilde M_I=\tilde M_n\cap \tilde M_J=M_{J\cup \{n\}}\cup M_{J\cup \{n+1\}}.
	\end{equation}
	Hence, if either of the sets $M_{J\cup \{n\}}$ or $M_{J\cup \{n+1\}}$ is a $\tilde T^1_{r,u}$-set we are done.
	If both of them are $\tilde T^5_{r,u}$-sets then $L_{n},L_{n+1}\geq\tilde U$ and $U_{n},U_{n+1}\geq\tilde L$ $U\geq \tilde L$ which implies $\tilde U\geq L$ and so
	\begin{equation*}
	\tilde M_I=\tilde M_n\cap \tilde M_J
	=\epi \max(L,\tilde L)\cap \hyp \min(\tilde U,U).
	\end{equation*}
	Hence $M_I$ is a $\tilde T^5_{r,u}$-set which concludes the proof.
\end{proof}\color{black}

\color{black}
\begin{lem}\label{L:unionf_i=uniong_i}
	Let $r>0$ and let $f_1,\dots,f_n:(-r,r)\to \er$ be DC functions.
	Then there are DC functions $g_1,\dots,g_n:(-r,r)\to \er$ such that $g_i\leq g_{i+1}$ on $(-r,r)$, $i=1,\dots,n-1$ and such that
	\begin{equation}\label{eq:unionf_i=uniong_i}
	\bigcup_i \graph f_i=\bigcup_i \graph g_i.
	\end{equation}
\end{lem}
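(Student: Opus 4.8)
The plan is to take $g_k$ to be the $k$-th order statistic of $f_1,\dots,f_n$: for $x\in(-r,r)$ let $g_1(x)\le g_2(x)\le\cdots\le g_n(x)$ be the numbers $f_1(x),\dots,f_n(x)$ arranged in nondecreasing order, counted with multiplicity. By construction $g_k\le g_{k+1}$ on $(-r,r)$ for $k=1,\dots,n-1$, and for every $x$ the multiset (hence also the set) of values $\{f_1(x),\dots,f_n(x)\}$ coincides with $\{g_1(x),\dots,g_n(x)\}$. Consequently a point $(x,y)$ lies in $\bigcup_i\graph f_i$ if and only if $y\in\{f_1(x),\dots,f_n(x)\}=\{g_1(x),\dots,g_n(x)\}$, i.e. if and only if $(x,y)\in\bigcup_i\graph g_i$; this is exactly \eqref{eq:unionf_i=uniong_i}. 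So everything reduces to showing that each $g_k$ is DC on $(-r,r)$.

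For this I would use the standard min--max representation of order statistics: for each $k\in\{1,\dots,n\}$,
\begin{equation*}
g_k(x)=\max_{\substack{S\subseteq\{1,\dots,n\}\\ |S|=n-k+1}}\ \min_{i\in S} f_i(x),\qquad x\in(-r,r).
\end{equation*}
Indeed, fixing an ordering $i_1,\dots,i_n$ with $f_{i_1}(x)\le\cdots\le f_{i_n}(x)$, any $S$ of size $n-k+1$ omits only $k-1$ indices, so $S$ meets $\{i_1,\dots,i_k\}$ and therefore $\min_{i\in S}f_i(x)\le g_k(x)$; on the other hand $S=\{i_k,i_{k+1},\dots,i_n\}$ has minimum exactly $g_k(x)$. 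Since the paper records that the maximum and the minimum of finitely many DC functions are DC, $g_k$ — being a finite maximum of finite minima of the DC functions $f_1,\dots,f_n$ — is DC on $(-r,r)$, and the proof is complete.

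An alternative route, closer to the machinery already set up, avoids the explicit formula: each $g_k$ is continuous on $(-r,r)$ (order statistics depend continuously on the underlying tuple and the $f_i$ are continuous), and trivially $g_k(x)\in\{f_1(x),\dots,f_n(x)\}$ for every $x$, so $g_k$ is DC by Lemma~\ref{L:Mixing} applied on the open convex set $(-r,r)\subset\er$. Either way, the only step requiring any thought is the assertion that sorting DC functions keeps them DC; this is dispatched routinely by the min--max identity (or by the mixing lemma), and I would not expect it to be a genuine obstacle. The monotonicity $g_k\le g_{k+1}$ and the graph equality \eqref{eq:unionf_i=uniong_i} are immediate from the definition of $g_k$.
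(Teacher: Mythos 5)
Your proof is correct, and your primary route is genuinely different from, and leaner than, the paper's. Both arguments define $g_k$ as the $k$-th order statistic of $f_1(x),\dots,f_n(x)$, and both dispose of the monotonicity and of \eqref{eq:unionf_i=uniong_i} in the same immediate way; the difference lies entirely in how one shows that the sorted functions are DC. The paper follows what you call the alternative route: it applies the mixing lemma (Lemma~\ref{L:Mixing}) to reduce the problem to continuity of each $g_k$, and then the bulk of its proof is an explicit $\varepsilon$--$\delta$ verification of that continuity (introducing the minimal gap $D$ between distinct values $f_k(x)$ and showing that strict order relations among the branches persist on a small neighbourhood of $x$, so that $g_i(y)=f_j(y)$ forces $g_i(x)=f_j(x)$). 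Your min--max identity $g_k=\max_{|S|=n-k+1}\min_{i\in S}f_i$ bypasses all of that: your two-line verification of the identity is complete, and it exhibits $g_k$ as a finite lattice combination of the $f_i$, hence DC by the closure under $\max$ and $\min$ of pairs (and so of finite families) that the paper already records. This buys a shorter, self-contained argument that needs neither the mixing lemma nor any continuity estimate, and as a byproduct it supplies the continuity of the order statistics that your fallback route merely asserts --- that bare assertion is precisely the step the paper labours over, so if you took the fallback route you should back it up either by the min--max formula or by an argument along the paper's lines.
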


\begin{proof}
	For $x\in(-r,r)$ define the values $g_1(x),\dots,g_n(x)$ as follows.
	Let $\sigma$ be some permutation on $\{1,\dots,n\}$ such that
	\begin{equation}\label{eq:permutingf_i(x)}
	f_{\sigma(1)}(x)\leq f_{\sigma(2)}(x)\leq \cdots \leq f_{\sigma(n)}(x).
	\end{equation}
	Then we put $g_{i}(x)\coloneqq f_{\sigma(i)}(x)$.
	It is easy to see that the definition of $g_{i}(x)$ is independent of the choice of $\sigma$ and that \eqref{eq:unionf_i=uniong_i} holds, and it remains to show that the functions $g_i$ are DC on $(-r,r)$.
	By Lemma~\ref{L:Mixing} (and \eqref{eq:unionf_i=uniong_i}) it is enough to show that each $g_i$ is continuous.
	To do that fix $x\in(-r,r)$ and $i\in\{1,\dots,n\}$. 
	Let $\eps>0$. We want to find a $\delta>0$ such that $|g_i(x)-g_i(y)|<\eps$ whenever $|x-y|<\delta$ and $y\in(-r,r)$.
	Put 
	\begin{equation*}
	D\coloneqq\min\{|f_k(x)-f_l(x)|:\;  k,l=1,\dots,n,\; f_k(x)\not = f_l(x)\}.
	\end{equation*}
	By the continuity of each $f_i$ we can find $\delta_1>0$ such that $|f_k(x)-f_k(y)|<\frac{D}{2}$ whenever $|x-y|<\delta_1$ and $k\in\{1,.\dots,n \}$.
	Similarly, we can find $\delta_2>0$ such that $|f_k(x)-f_k(y)|<\eps$ whenever $|x-y|<\delta_2$ and $k\in\{1,.\dots,n \}$.
	Put $\delta\coloneqq\min(\delta_1,\delta_2)$.
	
	Suppose that $|x-y|<\delta$ and let $j$ be such that $g_i(y)=f_j(y)$.
	We first claim that $g_i(x)=f_j(x)$. 
	To prove the claim consider $l$ such that $ f_j(x)>f_l(x)$ then (using $\delta\leq\delta_1$) we obtain
	\begin{equation*}
\begin{aligned}
	D&\leq f_j(x)-f_l(x)= f_j(x)-f_j(y)+f_j(y)-f_l(y)+f_l(y)-f_l(x)\\
	&<2\cdot\frac{D}{2}+f_j(y)-f_l(y).
\end{aligned}
	\end{equation*}
	Similarly
	$D <D+f_l(y)-f_j(y)$, provided $ f_j(x)<f_l(x)$.
	Therefore
	\begin{equation*}
	f_j(x)>f_l(x)\implies f_j(y)>f_l(y)\quad\text{and}\quad
	f_j(x)<f_l(x)\implies f_j(y)<f_l(y).
	\end{equation*}
	Since there is at most $i-1$ indices $l$ such that $g_i(y)=f_j(y)>f_l(y)$ we obtain that there is at most $i-1$ indices $l$ such that $f_j(x)>f_l(x)$ and so we know (using \eqref{eq:permutingf_i(x)}) that $g_i(x)\geq f_j(x)$ and
	similarly, there is at most $n-i$ indices $l$ such that $f_j(x)<f_l(x)$ and so $g_i(x)\leq f_j(x)$. Hence, $g_i(x)=f_j(x)$.
	Using the claim and also the fact that $\delta\leq\delta_2$ we can write
	\begin{equation*}
	|g_i(x)-g_i(y)|=|f_j(x)-f_j(y)|<\eps,
	\end{equation*}
	which completes the proof of the lemma.
\end{proof}\color{black}
\color{black}
\begin{lem}\label{L:boundaryImpliesType}
	Let $M\subset\er^2$ be a compact set such that $M^c$ has finitely many components and $\partial M$ is a union of finitely many DC graphs. Suppose that $x\in\partial M$  and $v\in \sph^1$. Then $M$ is of type $\T^i$ at $x$ in direction $v$ for some $i=1,2,3$.
\end{lem}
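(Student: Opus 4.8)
The plan is to work entirely in a small ball around $x$ and, after applying the isometry $\gamma_{x,v}$ (which maps DC graphs to DC graphs and preserves the hypotheses), reduce to the case $x=0$, $v=(1,0)$; we must then produce $r,u>0$ making $M$ a $\tilde\T^i_{r,u}$-set for some $i\in\{1,2,3\}$. First I would exploit that $\partial M$ is a finite union of DC graphs: discard those not containing $0$ (after shrinking the ball), and apply Remark~\ref{rem:DCgraphProperties} to each remaining one, so that near $0$ we may take $\partial M=\bigcup_{k}P_k$ with each $P_k$ a DC graph through $0$ and $\Tan(P_k,0)$ a $1$-dimensional subspace $\spa(w_k)$. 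Choose $u>0$ smaller than half the least absolute slope of those $w_k$ that are neither horizontal nor vertical (any small $u$ if there are none). For a $P_k$ with vertical tangent, $\Tan(P_k,0)=\spa((0,1))$ forces $P_k\cap A_r^{2u}=\{0\}$ once $r$ is small; the same holds when the slope of $w_k$ exceeds $2u$ in absolute value; and for a $P_k$ with horizontal tangent, Lemma~\ref{L:zestr} applies and presents $P_k$ near $0$ as $\graph h_k$ with $h_k$ DCR, $h_k(0)=0$ and $h_k'(0)=0$ (the two one-sided derivatives equal the slope of the $1$-dimensional tangent), whence $P_k\cap A_r^{2u}=\graph(h_k|_{[0,r)})\subset A_r^u$ for small $r$. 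Thus for suitable small $r,u$ one has $\partial M\cap A_r^{2u}\subset A_r^u$ and $\partial M\cap A_r^u=\bigcup_j\graph(h_j|_{[0,r)})$ for finitely many DCR functions $h_j$ with $h_j(0)=0$, $(h_j)'_+(0)=0$ and $\graph h_j\subset A_r^u$; checking that the auxiliary graphs from Remark~\ref{rem:DCgraphProperties} contribute no extra points inside $A_r^u$, so that this is a genuine equality, is one of the points needing care, and it is here that the finiteness of the number of components of $M^c$ keeps $\partial M\cap A_r^u$ a finite union of honest graphs.

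If $\partial M\cap A_r^u=\{0\}$, then $A_r^{2u}\setminus\{0\}$ is connected and misses $\partial M$, hence lies wholly in $\intr M$ or in $M^c$, and $M$ is a $\tilde\T^2_{r,u}$- or a $\tilde\T^1_{r,u}$-set. Otherwise I would apply the $[0,r)$-analogue of Lemma~\ref{L:unionf_i=uniong_i} to reorder the $h_j$ into DCR functions $f_1\le\dots\le f_n$ on $[0,r)$ with $\bigcup_i\graph f_i=\partial M\cap A_r^u$; since each value $f_i(x)$ is some $h_j(x)$, we retain $f_i(0)=0$, $(f_i)'_+(0)=0$ and $\graph f_i\subset A_r^u$, so all requirements for $M$ to be a $\tilde\T^3_{r,u}$-set hold except possibly the "sausage" implication \eqref{eq:T3sausages}.

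The main obstacle is \eqref{eq:T3sausages}. Fix $i$ with $Z_i:=\{x\in(0,r):f_i(x)=f_{i+1}(x)\}\neq\varnothing$ and put $\Omega_i:=\{(x,y):0<x<r,\ f_i(x)<y<f_{i+1}(x)\}$. A point of $\Omega_i$ has $y$ strictly between two consecutive values of the ordered family $f_1(x)\le\dots\le f_n(x)$, so it lies on no $\graph f_j$; hence $\Omega_i\subset A_r^u\setminus\partial M\subset \intr M\cup M^c$, and since $\intr M$ and $M^c$ are disjoint open sets, each connected component of $\Omega_i$ lies entirely in $\intr M$ or in $M^c$. These components are the "lenses" over the maximal subintervals of $\{f_i<f_{i+1}\}$. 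For a lens over $(a,b)$ with $b<r$, maximality (and $f_i(0)=f_{i+1}(0)=0$ in case $a=0$) shows its topological boundary is contained in $\graph f_i\cup\graph f_{i+1}\cup\{0\}\subset\partial M$; consequently, if such a lens lies in $M^c$ it is relatively clopen in $M^c$, hence a whole connected component of $M^c$. Distinct such "bad" lenses, even arising from different indices $i$ (the strips $\Omega_i$ are pairwise separated by graphs lying in $\partial M\subset M$), yield distinct components of $M^c$, and there is at most one lens per index with right endpoint $r$; therefore the total number of bad lenses is at most $(\#\text{components of }M^c)+(n-1)<\infty$.

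Finally I would combine this finiteness with the freedom to shrink $r$. Enumerate the bad lenses over intervals $(a_k,b_k)$, $k=1,\dots,K$, and replace $r$ by any $r'<\min\big(\{a_k:a_k>0\}\cup\{b_k:a_k=0\}\big)$. After this replacement, each bad lens with $a_k>0$ disappears from $A_{r'}^u$; each bad lens with $a_k=0$ lies over $(0,b_k)\supset(0,r')$, on which $f_i<f_{i+1}$ strictly, so the corresponding $Z_i$ becomes empty on $(0,r')$ and \eqref{eq:T3sausages} is vacuous for that $i$; and for every other $i$ all lenses of $\Omega_i$ inside $(0,r')$ now lie in $\intr M$, so, since $\graph f_i$, $\graph f_{i+1}$ and $0$ lie in $\partial M\subset M$, we get $\hyp(f_{i+1}|_{[0,r')})\cap\epi(f_i|_{[0,r')})\subset M$. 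Hence with $r'$ in place of $r$ the set $M$ is a $\tilde\T^3_{r',u}$-set, completing the proof. I expect the genuinely delicate steps to be the equality $\partial M\cap A_r^u=\bigcup_i\graph f_i$ in the first paragraph and the relatively-clopen argument for the lenses, which is exactly where the hypothesis on the components of $M^c$ is used.
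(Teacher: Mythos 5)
Your proposal follows essentially the same route as the paper's proof: reduce to $x=0$ via $\gamma_{x,v}$, split the DC graphs through $x$ according to their tangent directions using Remark~\ref{rem:DCgraphProperties} and Lemma~\ref{L:zestr} (graphs with non-horizontal tangent meeting $A_r^{2u}$ only at the origin), reorder the resulting DCR functions via Lemma~\ref{L:unionf_i=uniong_i}, and derive \eqref{eq:T3sausages} from the finiteness of the set of connected components of $M^c$ by showing that each ``bad lens'' is a full component of $M^c$. The one point where you diverge is your suggestion that the finiteness of the components of $M^c$ is what guarantees the equality $\partial M\cap A_r^{u}=\bigcup_i\graph f_i$: that finiteness cannot exclude a spurious ray coming from the linear extensions implicit in Remark~\ref{rem:DCgraphProperties}; the paper instead secures the equality by arranging the auxiliary graphs so that $Q_i\cap\{(x,y):x\ge 0\}=P_i\cap\{(x,y):x\ge 0\}$ (its \eqref{eq:Qi=Pi}), equivalently by discarding those auxiliary graphs whose half agreeing with $P_i$ points away from $A_r^{2u}$.
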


\begin{proof}
%	Let 
%	\begin{equation*}
%	\partial M=\bigcup_k P_k\eqqcolon \bigcup_k \{(x,f_i(x)):x\in H_i\}.
%	\end{equation*}
%	Here $P_k$ are DC graphs and $f_i$, $H_i$ are the corresponding $f$ and $H$, respectively, as in the definition of a DC graph. 
	Without any loss of generality we may assume that $x=0$ and $v=(0,1)$.
	
	By the assumptions of the lemma, $\partial M$ is a union of finitely many DC graphs $P_1,\dots,P_n$.
	Put $I=\{i: 0 \in P_i \}$.
Clearly $I \not= \varnothing$ ($x\in\partial M$), so we can suppose that $I= \{1,\dots, N\}$ for some $N\in\en$. 
	
	Put $\tilde I=\{i: \Tan(P_i,0)\supset \{(1,0)\} \}$.
	If $\tilde I = \varnothing$, then there exist $r, u >0$ such that  $\partial M\cap A_{r}^{2u}=\{0\}$
	and so $M$ is either a $\tilde{\T}_{r,u}^1$-set or a a $\tilde{\T}_{r,u}^2$-set.
	If $\tilde I \neq \varnothing$, we can again suppose that $\tilde I= \{1,\dots, \tilde N\}$ for some $\tilde N\in\en$. 
	We will prove that $M$ is an $\tilde{\T}_{r,u}^3$-set for some $u,r>0$.

	Due to Remark~\ref{rem:DCgraphProperties} we can find DC graphs $Q_1,\dots,Q_{\tilde N}$ such that 
	\begin{equation}\label{eq:TanIsSpan}
	\Tan(Q_i,0)=\spa \{(1,0)\}
	\end{equation}
	and such that 
	\begin{equation}\label{eq:Qi=Pi}
	Q_i\cap\{(x,y:x\geq 0)\}=P_i\cap\{(x,y:x\geq 0)\},\quad i=1,\dots, \tilde N.
	\end{equation}
	
	Using Lemma \ref{L:zestr} we obtain that  there exist 
	$u, \rho\in (0,\infty)$ such that for each $1\leq i \leq \tilde N$ there is a DCR function $\vf_i$ on $(-\rho, \rho)$ such that
	$Q_i \cap A_{r}^{2u} = \graph \vf_i$ for every $0<r<\rho$. 
	Note that also $(\vf_i)'(0) = 0$ by \eqref{eq:TanIsSpan}.
	Using \eqref{eq:Qi=Pi} we obtain 
	\begin{equation*}\label{eq:coverOfIntersection}
	\partial M\cap  A_{r}^{2u} = \left(\bigcup_{i}^{\tilde N}\graph \vf_i \right)\cap  A_{r}^{2u}
	\end{equation*}
	for every $0<r<\rho$.
	Since $(\vf_i)'(0) = 0$ we may additionally assume (perhaps by making $\rho$ smaller) that $\partial M\cap  A_{r}^{2u}\subset A_{r}^{u}$ for every $0<r<\rho$.
	Moreover, using Lemma~\ref{L:unionf_i=uniong_i} we can find DCR functions  $f_1,\dots,f_{\tilde N}:(-\rho,\rho)\to\er$ such that 
	\begin{equation*}
	\bigcup_{i}^{\tilde N}\graph \vf_i =\bigcup_{i}^{\tilde N}\graph f_i,
	\end{equation*}
	$f_i'(0)=0$
	and that $f_i\leq f_{i+1}$, $i=1,\dots,\tilde N-1$.
	It remains to prove that \eqref{eq:T3sausages} holds for some $0<r<\rho$.
	To do that put
	\begin{equation*}
	H_i\coloneqq\{x\in(0,\rho):\; f_{i+1}(x)=f_i(x)\}
	\end{equation*}
	and suppose for a contradiction that for some $i$ condition \eqref{eq:T3sausages} does not hold for any $0<r<\rho$.
	Fix some such $i$.
	Then there are $x_j,y_j\in H_i$, $j\in\en$, satisfying $y_{j+1}<x_j<y_j$ and $f_{i+1}>f_i$ on $(x_j,y_j)$, $j\in\en$.
	But this is a contradiction with the assumption that $M^c$ has only finitely many connected components since each set of the form $\{(x,y):\; x\in(x_j,y_j),\,f_{i+1}>y>f_i \}$, $j\in\en$, is a connected component of $M^c$.
\end{proof}

\color{black}

\begin{lem}\label{L:subEpiAura}
	Let $a>0$ and let $g,h$ be two DCR functions on $[0,a]$, $g\geq h$ on $[0,a]$.
	Then the set $\subgr g\cap\epi h\subset\er^2$ is $\WDC$.
\end{lem}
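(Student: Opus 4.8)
The plan is to exhibit $R:=\subgr g\cap\epi h=\{(x,y):0\le x\le a,\ h(x)\le y\le g(x)\}$ (a compact set, since $g,h$ are continuous on $[0,a]$) as the zero set of an explicit DC function with weakly regular value $0$. First I would extend $g$ and $h$ to all of $\er$: being DCR on $[0,a]$, each is the restriction of a DC function defined on an open interval containing $[0,a]$; shrinking to a closed subinterval on which the two convex components are Lipschitz and extending those components to $\er$ by their supporting lines at the endpoints gives DC functions $G_0,H_0\colon\er\to\er$ with $G_0|_{[0,a]}=g$ and $H_0|_{[0,a]}=h$. I then replace them by $G:=\max(G_0,H_0)$ and $H:=\min(G_0,H_0)$, which remain DC on $\er$, still agree with $g$ and $h$ on $[0,a]$ (because $g\ge h$ there), but now satisfy $G\ge H$ \emph{on all of $\er$} --- this global inequality is exactly what will be needed below. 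The defining function is
\[
F(x,y):=\max\{\,0,\ -x,\ x-a,\ y-G(x),\ H(x)-y\,\}.
\]
Since $(x,y)\mapsto G(x)$ and $(x,y)\mapsto H(x)$ are DC on $\er^2$ and a finite maximum of DC functions is DC, $F$ is a DC function $\er^2\to[0,\infty)$; and $F(x,y)=0$ iff $0\le x\le a$ and $H(x)\le y\le G(x)$, i.e.\ iff $(x,y)\in R$. Hence $R=F^{-1}(\{0\})$, and it remains to verify that $0$ is a weakly regular value of $F$.

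This last point is the hard part. Let $L$ be a common Lipschitz constant for $G$ and $H$ on $[-1,a+1]$ (available since DC functions are locally Lipschitz). Fix $z=(x,y)$ with $\dist(z,R)\le 1$ and $F(z)>0$; then $x\in[-1,a+1]$. Write $\psi_0\equiv0$, $\psi_1(x,y)=-x$, $\psi_2(x,y)=x-a$, $\psi_3(x,y)=y-G(x)$, $\psi_4(x,y)=H(x)-y$, and let $I(z)$ be the set of indices with $\psi_k(z)=F(z)$. The Clarke subdifferential calculus for a finite maximum gives $\partial F(z)\subseteq\conv\bigcup_{k\in I(z)}\partial\psi_k(z)$. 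Since $F(z)>0=\psi_0(z)$ we have $0\notin I(z)$; since $a>0$ we cannot have $\{1,2\}\subseteq I(z)$ (that would force $-x=x-a>0$); and since $G\ge H$ everywhere we cannot have $\{3,4\}\subseteq I(z)$ (that would force $y>G(x)\ge H(x)>y$). So $I(z)$ is one of $\{1\},\{2\},\{3\},\{4\},\{1,3\},\{1,4\},\{2,3\},\{2,4\}$. Using $\partial\psi_1\equiv\{(-1,0)\}$, $\partial\psi_2\equiv\{(1,0)\}$, $\partial\psi_3(z)=\{(-s,1):s\in\partial G(x)\}$, $\partial\psi_4(z)=\{(s,-1):s\in\partial H(x)\}$, together with $|\partial G(x)|,|\partial H(x)|\le L$, a short computation in each of the eight cases shows $|\xi|\ge\frac{1}{2(1+L)}$ for every $\xi\in\partial F(z)$. (Singletons are immediate; for $I(z)=\{1,3\}$, say, every $\xi\in\partial F(z)$ has the form $\xi=(-1+\xi_2(1-\bar s),\,\xi_2)$ with $\xi_2\in[0,1]$ and $\bar s\in[-L,L]$, so $|\xi_1|\ge 1-\xi_2(1+L)$ and hence $|\xi|\ge\frac{1}{2(1+L)}$; the other two-element cases are analogous.) Thus if $z_i\to z\in R$ with $F(z_i)>0$ and $\xi_i\in\partial F(z_i)$, then $z_i$ eventually lies in the closed $1$-neighbourhood of $R$, so $|\xi_i|\ge\frac{1}{2(1+L)}>0$ and $\xi_i\not\to0$; hence $0$ is a weakly regular value of $F$ and $R$ is $\WDC$.

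The only genuine obstacle is the weak-regularity check near the four "corners'' of $R$, where $F$ is a maximum of three of the $\psi_k$ and $0$ could a priori appear in $\partial F$ as a convex combination of the corresponding gradients. The $\max/\min$ replacement in Step~1, which forces $G\ge H$ globally, eliminates exactly the single dangerous configuration --- $\psi_1$ (or $\psi_2$) active simultaneously with both $\psi_3$ and $\psi_4$ --- after which the remaining estimates are routine.
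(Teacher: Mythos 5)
Your proof is correct, and it follows the same overall strategy as the paper: exhibit $\subgr g\cap\epi h$ as the zero set of an explicit DC function with $0$ as a weakly regular value. The difference lies in how the aura is built and, consequently, in how weak regularity is verified. The paper extends $g,h$ by constants to $\tilde g,\tilde h$ on $\er$ and takes the \emph{sum} $H(x,y)=2L\max(0,x-a,-x)+\max\bigl(0,\,y-\tilde g(x),\,\tilde h(x)-y\bigr)$, where $L$ is a common Lipschitz constant; the weight $2L$ on the horizontal term guarantees that the first component of any Clarke subgradient has modulus at least $2L-L=L$ whenever $x\notin[0,a]$, while the second component equals $\pm1$ whenever the vertical constraints are violated, so weak regularity is read off one coordinate at a time and no corner analysis is needed. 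Your aura is a single five-fold maximum, which forces you to control convex combinations of gradients of \emph{distinct} active constraints at the corners of the region; you handle this correctly via the Clarke max rule, the exclusion of the simultaneous activity of $\{1,2\}$ and of $\{3,4\}$ (the latter secured by your $\max/\min$ rearrangement enforcing $G\ge H$ on all of $\er$ --- the same role the paper's global inequality $\tilde g\ge\tilde h$ plays), and the quantitative bound $|\xi|\ge\tfrac{1}{2(1+L)}$ in the two-index cases, which checks out. Both arguments are sound; the paper's weighting trick buys a shorter verification, while yours keeps the aura a pure maximum at the cost of a short case analysis.
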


\begin{proof}
	Pick $L>0$ such that both $g$ and $h$ are $L$-Lipschitz on $[0,a]$.
	Define functions $\tilde g,\tilde h:\er\to\er$ by $\tilde{g}(x)=g(x)$ and $\tilde{h}(x)=h(x)$, $x\in[0,a]$, $\tilde{g}(x)=g(a)$ and $\tilde{h}(x)=h(a)$, $x\in[a,\infty)$ and
	$\tilde{g}(x)=g(0)$ and $\tilde{h}(x)=h(0)$, $x\in(-\infty,0]$.
	Then $\tilde g$ and $\tilde h$ are $L$-Lipschitz DC functions on $\er$ such that $\tilde g|_{[0,a]}=g$, $h|_{[0,a]}=h$ and $\tilde{g}\leq\tilde{h}$ on $\er$.
	Denote $M\coloneqq\subgr g\cap\epi h$, $A=[0,a]\times\er$ and  $B\coloneqq\subgr \tilde g\cap\epi\tilde h$.
	Clearly $M=A\cap B$.
	
	Define, for $(x,y)\in\er^2$, $F(x,y)\coloneqq 2L\max (0,x-a,-x)$,
	$G(x,y)\coloneqq\max(0,y-g(x),h(x)-y)$.
	and $H\coloneqq F+G$.
	We will prove that $H$ is a DC aura for $M$.
	It is easy to see that $H$ is DC and that $M= H^{-1}(\{0\})$.
	It remains to show that $0$ is a weakly regular value of $H$.
	
	Note that if $(x,y)\in \er^2\setminus M$, then one of the conditions $x>a$, $x<0$, $y>g(x)$ or $y<h(x)$ holds.
	Pick $(x,y)\in \er^2\setminus M$ and $v\in\partial H(x,y)$.
	Now, if $x>0$ then $v_1\geq L$, if $x<0$ then $v_1\leq -L$, if $y>g(x)$ then $v_2= 1$ and, finally, if $y<h(x)$ then $v_2= -1$.
	In either case $|v|\geq\min(1,L)>0$ which proves that $0$ is a weakly regular value of $H$.
\end{proof}

\color{black}

\begin{definition}
	A set $M\subset\er^2$ is called a DC cone if there are $a>0$, two DCR functions on $[0,a]$, $g\geq h$ on $[0,a]$, and a rotation $\gamma$ on $\er^2$ such that $M=\gamma(\subgr g\cap\epi h)$.
\end{definition}

%\begin{cor}
%	Let $M=\bigcup_{i=1}^{N} M_i$ be the set in $\er^2$ such that each $M_i$ is a DC cone and such that $M_I\coloneqq\bigcap_{i\in I} M_i$ is a line segment or a singleton for every $I\in\Sigma_N$.
%	Then $M$ is $\UWDC$.
%\end{cor}

%\begin{proof}
%	Since all line segments and singletons are \WDC, $M$ is $\UWDC$ by the definition.
%\end{proof}

\begin{cor}\label{C:typeImpliesUWDC}
	Let $M$ be a set in $\er^2$ such that for every $x\in\partial M$ and $v\in \sph^1$, there exists $1\leq i \leq 3$ such that $M$ is of type $\T^i$ at $x$ in direction $v$.
	Then $M$ is $\UWDC$.
\end{cor}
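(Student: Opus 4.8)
The plan is to verify the local condition of Definition~\ref{def:UWDC} at an arbitrary point $x\in M$. If $x\in\INt M$ a small closed ball centred at $x$ lies in $M$ and is convex, hence $\WDC$, so we may assume $x\in\partial M$.

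By hypothesis, to each $v\in\sph^1$ we may attach $r_v,u_v>0$ and $i(v)\in\{1,2,3\}$ with $(\gamma_{x,v})^{-1}(M)$ a $\tilde{\T}^{i(v)}_{r_v,u_v}$-set, and by Remark~\ref{rem:oTypech} we are free to take $u_v$ as small as we wish. The open cones of directions spanned by the wedges $A^{2u_v}_{r_v}(x,v)$ cover $\sph^1$, so finitely many directions $v_1,\dots,v_N$ suffice. I would then choose, for each $k$, a closed ``sector'' $S_k$ with apex $x$, contained in $A^{2u_{v_k}}_{r_{v_k}}(x,v_k)$, bounded by two segments through $x$ and a third segment (no circular arcs), whose cone of directions is a short closed arc $A_k\ni v_k$, so that the arcs $A_k$ still cover $\sph^1$ and the sectors are thin enough that for $k\ne k'$ no boundary curve of $M$ coming from a $\tilde{\T}^3$-structure enters $S_k\cap S_{k'}$ near $x$ — this last point is available because such a boundary curve is tangent to its wedge-axis $v_j$ at $x$, hence cannot lie near a direction bounded away from $v_j$. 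One obtains a ball $U=B(x,\delta)\subseteq\bigcup_kS_k$, so $M\cap U=U\cap\bigcup_k(M\cap S_k)$.

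Next I describe $M\cap S_k$. If $i(v_k)=1$ then $M\cap S_k=\{x\}$, and if $i(v_k)=2$ then $M\cap S_k=S_k$; both are $\WDC$ — a point, respectively a DC cone (Lemma~\ref{L:subEpiAura} applied to the two affine functions bounding $S_k$ in the frame $\gamma_{x,v_k}$). If $i(v_k)=3$, then, after shrinking the radial size of $S_k$ so that the graphs $\graph f_i$ given by the $\tilde{\T}^3$-structure (read in the frame $\gamma_{x,v_k}$) stay inside $S_k$ and between its two edges — possible since $(f_i)'_+(0)=0$ — these graphs together with the edges of $S_k$ cut $S_k$ into finitely many regions. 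Each region is, up to its boundary, contained in $M$ or avoids $\INt M$, and condition \eqref{eq:T3sausages} forces any region whose bounding curves touch somewhere on $(0,\cdot)$ to lie in $M$. Hence $M\cap S_k$ is the union of the closures of the ``interior'' regions; grouping consecutive interior regions into ``blocks'', each block has the form $\{\gamma_{x,v_k}(s,t):0\le s\le a_k,\ g(s)\le t\le h(s)\}$ with $g\le h$ DCR, so it is a DC cone, hence $\WDC$ by Lemma~\ref{L:subEpiAura}. The essential point is that two distinct blocks of one $S_k$ are separated by an ``exterior'' region, whose bounding curves are then (again by \eqref{eq:T3sausages}) strictly ordered on $(0,a_k)$; since all these curves pass through $x$ with vanishing derivative, two distinct blocks meet only in $\{x\}$.

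Taking for the sets $M_1,\dots,M_p$ of Definition~\ref{def:UWDC} all the blocks obtained above yields $M\cap U=U\cap\bigcup_iM_i$ with each $M_i$ $\WDC$, and it remains — and this is the main obstacle — to check that every intersection $M_I:=\bigcap_{i\in I}M_i$ is $\WDC$. For two blocks of the same $S_k$ this is $\{x\}$ or a single block. For blocks of two different sectors $S_k,S_{k'}$ the intersection lies in $S_k\cap S_{k'}$, which — by the thinness arrangement — contains no arc of $\partial M$ near $x$ and, being cut out by segments through $x$ and by the straight truncating segments, is itself a DC cone; intersecting it further with the two blocks again gives a DC cone, a segment, or $\{x\}$. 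Intersections of blocks from three or more sectors collapse to $\{x\}$. Thus all $M_I$ are $\WDC$, $M$ satisfies Definition~\ref{def:UWDC}, and $M\in\UWDC$. I expect the genuinely delicate part to be the simultaneous choice of the sectors $S_k$ (ensuring the $\tilde{\T}^3$-data is seen faithfully inside each one and the pairwise overlaps are harmless) together with the systematic exploitation of \eqref{eq:T3sausages}, which is exactly what makes distinct blocks — and hence all higher intersections — essentially trivial.
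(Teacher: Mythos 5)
Your proposal is correct and follows essentially the same route as the paper's proof: both exploit the finiteness of the set of type-$\T^3$ directions to split a neighbourhood of $x$ into finitely many thin sectors meeting pairwise only at $x$ away from those directions, realise $M$ inside each sector as a finite union of regions between ordered DCR graphs (which are $\WDC$ by Lemma~\ref{L:subEpiAura}) whose pairwise intersections degenerate to $\{x\}$ thanks to \eqref{eq:T3sausages}, and then check that all remaining intersections are convex. The one small slip is that in the type-$\T^3$ case $M\cap S_k$ is not just the union of closures of the ``interior'' regions — an isolated graph $\graph f_i$ lies in $\partial M\subset M$ even when both adjacent regions are exterior, and must be added as a degenerate block with $g=h$ (the paper handles these separately via its index set $J$) — but this does not affect the rest of the argument.
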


\begin{proof}\color{black}
	Pick $x\in M$. We want to find a $\UWDCG$ set $K\subset\er^2$ and $\rho>0$ such that $M\cap B(x,\rho)=K\cap B(x,\rho)$.
	This is enough to prove that $M$ is $\UWDC$ by Remark~\ref{rem:UWDSremarks}~\ref{item:remarkIntersection}.
	
	First of all we can assume that $x\in\partial M$ (since otherwise $x\in M^{\circ}$ and it is enough to pick any $\rho>0$ such that $B(x,\rho)\subset M$ and $K=B(x,\rho)$).
	
	First we claim that there are finitely many $v_1,\dots, v_N\in\sph^1$ and $u,r>0$ such that 
	\begin{enumerate}[label={ (\Alph*)}]
		\item\label{cond:TiForEveryv}  $M$ is a $\tilde \T^3_{r,u}$-set at $x$ in direction $v_j$ for every $j=1,\dots,N$,
		\item\label{cond:A2urCoverBall} %the set
		\begin{equation*}
		\partial M\cap B\left(x,\frac{r}{2}\right)\subset \bigcup_{j=1}^N A^{u}_r(x,v_j)
		\end{equation*}
		\item\label{cond:A2urDontIntersect} $A^{2u}_r(x,v_j)\cap A^{2u}_r(x,v_k)=\{x\}$ whenever $j\not=k$.
	\end{enumerate}
%To prove the claim pick for every $v\in \sph^1$ some $r(v)$ and $u(v)$ such that
%$(\gamma_{x,v})^{-1}(M)$ is a $\tilde \T_{r(x),u(x)}^i$-set for some $i$.
%Put 
%\begin{equation*}
%D(v)=\left\{\frac{y-x}{|y-x|}: y\in \left(\left(\gamma_{x,v})^{-1}(M)\cap A^{2u_j}_r(x,v_j\right)\right)\setminus\{x\} \right\}\subset\sph^1.
%\end{equation*}
%By the compactness of $\sph^1$ we can find pairwise different $v_1,\dots,v_N\in\sph^1$ such that $\bigcup D(v_j)^\circ\supset\sph^1$.

%To prove the claim pick for every $v\in \sph^1$ some $r(v)$ and $u(v)$ such that
%$(\gamma_{x,v})^{-1}(M)$ is a $\tilde \T_{r(x),u(x)}^i$-set for some $i$.
%Now, clearly the set of all $v\in \sph^1$ such that $\tilde \T_{r(x),u(x)}^i$-set for some $i$$V_3\subset \sph^1$

Let $V_3\subset \sph^1$, be the set of all $v$ such that $M$ is of type $\T^3$ at $x$ in the direction $v$. Note that the set $V_3$ is finite (if not, then there would be a sequence $\{v_i\}\subset V_3$ converging to some $v\in \sph^1$, but then $M$ cannot be of type $\T^i$ at $x$ in direction $v$ for any $i=1,2,3$).
Let $V_3=\{v_1,\dots,v_N\}$ and let $r_l$ and $u_l$, $l=1,\dots, N$ be such that
$(\gamma_{x,v_l})^{-1}(M)$ is a $\tilde \T_{r_l,u_l}^3$-set.
By Remark~\ref{rem:oTypech}, \ref{item:rFreeInType} and \ref{item:uFreeInType} we may assume that there are some $r',u>0$ such that $r_l=r'$ and $u_l=u$, $l=1,\dots, N$, and also that \ref{cond:TiForEveryv} and \ref{cond:A2urDontIntersect} hold for every $0<r<r'$.

%Put 
%\begin{equation*}
%M_3\coloneqq M\setminus  \bigcup_{j=1}^q (A^{2u_j}_r(x,v_j))^\circ
%\end{equation*}
%We claim that there is $r'>r''>0$ such that 
%$
%M_3\cap B(x,r'')
%$
%is a union of finitely many circular sectors (of radius $r''$).
%To prove the claim first note that
%\begin{equation*}
%B\left(x,\frac{r'}{2}\right)\setminus  \bigcup_{j=1}^q A^{2u_j}_r(x,v_j)^\circ
%\end{equation*}
%is a union of finitely many (possibly degenerated) circular sectors $S_1,\dots,S_p$.

It remains to prove \ref{cond:A2urCoverBall}. To do this assume (aiming to a contradiction) that for every $n\in\en$ there is $y_n\in\partial M\cap B(x,\frac1n)$ such that
\begin{equation}\label{eq:y_nnotInUnion}
y_n\not\in\bigcup_{j=1}^N A^{u}_r(x,v_j).
\end{equation}
%Then (by the definition of type $\T^3$) even $y_n\not\in\bigcup_{j=1}^N A^{2u_j}_r(x,v_j)$, $n\in\en$. 
Define $w_n\coloneqq \frac{y_n-x}{|y_n-x|}$. We can assume (possibly by passing to a subsequence) that $w_n\to w\in\sph^1$ as $n\to\infty$.
Then one one hand $w\in V_3$ ($A^u_\rho(x,w)\cap\partial M\not=\varnothing$ for any $u,\rho>0$), but on the other hand \eqref{eq:y_nnotInUnion} implies that $w\not\in V_3$, a contradiction.

%
%Therefore it is enough to prove that $S_j\cap B(x,\rho)\cap \partial M=\varnothing$ for every $j$ and sufficiently small $0<\rho<\frac{r'}{2}$.
%Pick $m\in\{1,\dots,p\}$ and suppose for a contradiction that for every $n\in\en$ there is some $x_n\in S_m\cap B\left(x,\frac{r'}{2n}\right)\cap \partial M$.
%Put $y_n\coloneqq\frac{x_n-x}{|x_n-x|}\in\sph^1$.
%By the compactness of $\sph^1$ there is a convergent subsequence $y_{n_k}\to y\in\sph^1$.
%By the construction of $y$ we see that $y\in V_3$ since $(A^s_t(x,v)\setminus\{x\})\cap\partial M\not=\varnothing$ for every $s>0$. 
%But at the same time clearly $v\notin V_3$ since $\partial M\cap A^{2u_j}_r(x,v_j)\subset A^{u_j}_r(x,v_j)$ and $x_n\notin A^{2u_j}_r(x,v_j)^\circ$ for any $j$.

Pick now $v_1,\dots, v_N\in\sph^1$ and $u,r>0$ as in the claim above.
For every $i=1,\dots,N$ we have
\begin{equation*}
M\cap \overline{A^{2u}_r(x,v_j)}=K^{-}_j\cup K^{+}_j\cup K^0_j,
\end{equation*}
where

\begin{equation*}
K_j^+\coloneqq  M\cap \gamma_{x,v_j}(\left\{(z,y): 0\leq z\leq r, zu\leq y\leq 2zu \right\}),
\end{equation*}
\begin{equation*}
K_j^-\coloneqq  M\cap \gamma_{x,v_j}(\left\{(z,y): 0\leq z\leq r, -zu\geq y\geq -2zu \right\})
\end{equation*}
and
\begin{equation*}
K_j^0\coloneqq  M\cap \overline{A^{u}_r(x,v_j)}.
\end{equation*}

%We claim that $M\cap A^{u}_r(x,v_j)$ is $\UWDC$.
Put 
\begin{equation*}
M_3\coloneqq \left(M\cap B\left(x,\frac{r}{2}\right)\right)\setminus  \bigcup_{j=1}^q (A^{2u}_r(x,v_j))^\circ.
\end{equation*}
Then it is easy to see that (by \ref{cond:A2urCoverBall} and \ref{cond:A2urDontIntersect}) $M_3$ is a union of finitely many closed convex circular sectors $S_1\dots, S_p$.

First note that each set $K_j^\pm$ is convex (in fact it is always either a singleton $\{x\}$, or a triangle with vertices $x$, $\gamma_{x,v_j}(r,\pm ur)$ and $\gamma_{x,v_j}(r,\pm 2ur)$).
%cThis is because $(K_j^\pm)^{\circ}\cap \partial M=\varnothing$
%Thus we have
%\begin{equation}\label{eq:MpmKpm}
%\left(\bigcap _{j\in J}K_j^-\right)\cap \left(\bigcap_{l\in L}K_l^{+}\right)\quad \text{is convex for every $J,L\in\Sigma_N$},
%\end{equation}
Moreover, by \ref{cond:A2urDontIntersect},
\begin{equation}\label{eq:K0Kpm}
K_j^0\cap K_l^\pm=\{x\}, \quad j\not=l
\end{equation}
and
\begin{equation}\label{eq:K0K0}
K_j^0\cap K_l^0=\{x\}, \quad j\not=l.
\end{equation}
Also,
\begin{equation}\label{eq:type12Convex2}
S_i\cap K^0_l=\{x\}, \quad \text{ for every $i=1,\dots,p$ and $l=1,\dots,N$}.
\end{equation}
Next we claim that for every $j\in\{1,\dots, N\}$ there are $q(j)\in\en$ and $K_{j,m}\subset\er^2$, $m=1,\dots,q(j)$ such that
\begin{enumerate}[label={\rm (\roman*)}]
	\item\label{cond:KjmWDC} each $K_{j,m}$ is $\WDC$,
	\item\label{cond:KjmUnion} $K^0_j=\overline{A^{u}_r(x,v_j)}\cap\bigcup_{m=1}^{q(j)}K_{j,m}$,
	\item\label{cond:KjmIntersections} $K_{j,m}\cap K_{j,l}=\{x\}$, $m\not=l$,
	\item\label{cond:KjmintersectionConvex} $K_{j,m}\cap K^\pm_j$ is convex for each $m$.
\end{enumerate}
To prove the claim pick some such $j$.
We can assume that $v_j=(1,0)$ and so $M$ is a $\tilde \T^3_{r,u}$-set.
Let $f_1,\dots f_n$ be the corresponding functions from the definition of $\tilde \T^3_{r,u}$-set.
Put $f_0(x)\coloneqq-ux$ and $f_{n+1}(x)\coloneqq ux$.

Denote $g_i$ as a continuous extension of $f_i$ (which is defined on $[0,r)$) to $[0,r]$, $n=0,\dots,n+1$. Clearly each $g_i$ is DCR on $[0,r]$.
Let $I$ be the system of those $m\in\{0,\dots,n\}$ that satisfy $\{(a,b)\in M: f_m(a)<b<f_{m+1}(a) \}\not=\varnothing$ and let $J$ be the system of all $m\in\{1,\dots,n\}$ satisfying both $m-1\notin I$ and $m\notin I$.
Put $q(j)=|J|+|I|$. 
Pick some bijections $\kappa:\{1,\dots,|I|\}\to I$ 
and $\tau:\{1,\dots,|J|\}\to J$ and
define $K_{j,m}\coloneqq\graph g_{\tau(l)}$, $l=1,\dots, |J|$ and
\begin{equation*}
K_{j,|J|+l}\coloneqq \epi g_{\kappa(l)}\cap \subgr g_{\kappa(l)+1},
\quad l=1,\dots, |I|.
\end{equation*}

Now, Lemma~\ref{L:subEpiAura} implies \ref{cond:KjmWDC}, \eqref{eq:T3covering} implies \ref{cond:KjmUnion} and \ref{cond:KjmIntersections} follows from \eqref{eq:T3sausages}.
Also, since $K_{j,m}\cap K^\pm_j$ is either a line segment (this can only happen when $\kappa(m)=n$ or $\kappa(m)=0$) or a singleton $\{x\}$ (in all other cases) we obtain \ref{cond:KjmintersectionConvex}.

Put
\begin{equation*}
\K^s=\{K^\pm_1,\dots,K^\pm_N\},\quad
\sS=\{S_i:\; i=i,\dots p \},
\end{equation*}
\begin{equation*}
\K_3=\{K_{j,m}:\; j=1,\dots, N,\; m=1,\dots, q(j),\},
\end{equation*}
\begin{equation*}
\K=\K^s\cup\sS\cup\K^0_3\quad\text{and}\quad K=\bigcup \K.
\end{equation*}

Note that both $\K^s$ and $\sS$ are collections of convex sets while $\K_3$ is a collection of $\WDC$ sets. We claim that $K$ is $\UWDCG$. This is enough to finish the proof ot the theorem since $K\cap U\left(x,\frac{r}{2}\right)=M\cap U\left(x,\frac{r}{2}\right)$.

Pick $\varnothing\not=\K'\subset\K$ and put $K'=\bigcap \K'$.
We want to prove that $K'$ is $\WDC$.
First of all, if $|\K'|=1$, then we are done since $K'$ is either convex (if $\K'\subset \K^s\cup\sS$) or $\WDC$ by \ref{cond:KjmWDC}.
If $|\K'|>1$ then $K'$ is always convex by \eqref{eq:K0Kpm}, \eqref{eq:K0K0}, \eqref{eq:type12Convex2}, \ref{cond:KjmIntersections} and \ref{cond:KjmintersectionConvex} (recall that $\sS$ and $\K^s$ contain only convex sets).
Of course, each convex set is $\WDC$.
\end{proof}

\color{black}
\begin{thm}\label{T:characterizationUWDC}
	Let $M\subset\er^2$ be a compact set. Then the following conditions are equivalent.
		\begin{enumerate}[label={ (\Alph*)}]
		\item\label{cond:MisUWDC} $M\in\UWDC$,
		\item\label{cond:MdcBoundary} $M^c$ has finitely many connected components and $\partial M$ is a union of finitely many DC graphs,
		\item\label{cond:Mpie} for every $x\in\partial M$ and $v\in \sph^1$, there exists $1\leq i \leq 3$ such that $M$ is of type $\T^i$ at $x$ in direction $v$.
	\end{enumerate}
\end{thm}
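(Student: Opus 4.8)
The plan is to prove the four implications $\ref{cond:MdcBoundary}\Rightarrow\ref{cond:Mpie}$, $\ref{cond:Mpie}\Rightarrow\ref{cond:MisUWDC}$, $\ref{cond:MisUWDC}\Rightarrow\ref{cond:Mpie}$ and $\ref{cond:Mpie}\Rightarrow\ref{cond:MdcBoundary}$, which together yield $\ref{cond:MisUWDC}\Leftrightarrow\ref{cond:Mpie}\Leftrightarrow\ref{cond:MdcBoundary}$. Of these, $\ref{cond:MdcBoundary}\Rightarrow\ref{cond:Mpie}$ is exactly Lemma~\ref{L:boundaryImpliesType} and $\ref{cond:Mpie}\Rightarrow\ref{cond:MisUWDC}$ is exactly Corollary~\ref{C:typeImpliesUWDC}, so only the remaining two require an argument.

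For $\ref{cond:MisUWDC}\Rightarrow\ref{cond:Mpie}$, fix $x\in\partial M$ and $v\in\sph^1$. By Definition~\ref{def:UWDC} there are an open $U\ni x$ and sets $M_1,\dots,M_j$ with $M\cap U=U\cap\bigcup_i M_i$ and each $M_I$, $I\in\Sigma_j$, compact $\WDC$. The $M_i$ being closed, after shrinking $U$ we may delete those $M_i$ not containing $x$, so that $x\in M_I$ for every remaining $I$. For each such $I$: if $x\in\INt M_I$ then $M_I$ is trivially of type $T^2$ at $x$ in direction $v$; and if $x\in\partial M_I$ then, $M_I$ being (locally) $\WDC$, Lemma~\ref{L:locpie} gives $1\le i\le5$ with $M_I$ of type $T^i$ at $x$ in direction $v$. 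Since the type depends only on the germ of the set at $x$ and $M$ agrees with $\bigcup_i M_i$ near $x$, Lemma~\ref{L:typeStableOnIntersection} applied to $\bigcup_i M_i$ gives that $M$ is of type $\T^i$ at $x$ in direction $v$ for some $i\in\{1,2,3\}$.

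The implication $\ref{cond:Mpie}\Rightarrow\ref{cond:MdcBoundary}$ is the heart of the matter: a compactness globalization of the local type data. Fix $R$ with $M\subset B(0,R)$ and fix $x\in\partial M$. If $x$ is isolated in $M$ then $\partial M\cap B(x,\rho)=\{x\}$ for small $\rho$ (a degenerate DC graph) and $M^c\cap B(x,\rho)=B(x,\rho)\setminus\{x\}$ is connected. Otherwise, arguing as in the proof of Corollary~\ref{C:typeImpliesUWDC}, there are directions $v_1,\dots,v_N\in\sph^1$ and $u,r>0$ such that $M$ is a $\tilde\T^3_{r,u}$-set at $x$ in each direction $v_j$, $\partial M\cap B(x,r/2)\subset\bigcup_j A_r^u(x,v_j)$, and the fat cones $A_r^{2u}(x,v_j)$ meet pairwise only at $x$; let $f_1^{(j)}\le\dots\le f_{n_j}^{(j)}$ on $[0,r)$ be the DCR functions from Definition~\ref{typesNew}, so $\partial M\cap A_r^{2u}(x,v_j)=\gamma_{x,v_j}(\bigcup_i \graph f_i^{(j)})$ and \eqref{eq:T3sausages} holds. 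Since every point of $A_r^{u}(x,v_j)$ at distance $<r/2$ from $x$ already lies in $\overline{A_{r/2}^{2u}(x,v_j)}$, we get $\partial M\cap B(x,r/2)\subset\bigcup_j\overline{A_{r/2}^{2u}(x,v_j)}$, and $\partial M$ meets the \emph{compact} cone $\overline{A_{r/2}^{2u}(x,v_j)}$ in the honest finite union of DC graphs $\gamma_{x,v_j}(\bigcup_i \graph(f_i^{(j)}|_{[0,r/2]}))$. Covering $\partial M$ by finitely many such balls $B(x_l,r_l/2)$ and writing $\partial M=\bigcup_l (\partial M\cap B(x_l,r_l/2))$ then exhibits $\partial M$ as a finite union of DC graphs. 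For the complement I would apply Proposition~\ref{P:numberOfComponentsBound} to $M^c$ with the finite cover made of $\er^2\setminus\overline{B(0,R)}$, the closed cones $\overline{A_{r_l/2}^{2u_l}(x_l,v_j^{(l)})}$ and the finitely many angular sectors filling $B(x_l,r_l/2)\setminus\bigcup_j\overline{A_{r_l/2}^{2u_l}(x_l,v_j^{(l)})}$ (at the non-isolated $x_l$), the balls $B(x_l,\rho_l)$ (at the isolated $x_l$), and finitely many balls lying inside $\INt M$ or inside $M^c$ covering the rest of $\overline{B(0,R)}$; here one uses that $\overline{A_{r_l/2}^{2u_l}(x_l,v_j^{(l)})}\setminus\bigcup_i \graph(f_i^{(l,j)}|_{[0,r_l/2]})$ decomposes into the connected region below $f_1^{(l,j)}$, the connected region above $f_{n_j}^{(l,j)}$, and the bands between consecutive graphs, and \eqref{eq:T3sausages} forces a band whose two bounding graphs touch to lie in $M$, ruling out infinitely many $M^c$-bubbles. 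Since each remaining member of the cover meets $M^c$ in at most one piece, Proposition~\ref{P:numberOfComponentsBound} gives that $M^c$ has finitely many components.

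The main obstacle is this last step. The delicate points are (a) replacing the thin cones and half-open intervals supplied by Definition~\ref{typesNew} with \emph{closed} fat cones $\overline{A_{r/2}^{2u}(x,v)}$ over \emph{closed} intervals $[0,r/2]$, so that $\partial M$ restricted to them is an honest finite union of DC graphs and the complement inside them has only finitely many pieces; and (b) running Proposition~\ref{P:numberOfComponentsBound} with a cover built from these closed cones rather than balls, which is what keeps the complement-component count finite — the decisive mechanism being the ``no bubbles'' content of \eqref{eq:T3sausages}. The implication $\ref{cond:MisUWDC}\Rightarrow\ref{cond:Mpie}$ is, by comparison, a routine combination of Lemmas~\ref{L:locpie} and~\ref{L:typeStableOnIntersection}.
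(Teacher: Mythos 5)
Your proof is correct and follows essentially the same route as the paper: the implications $\ref{cond:MdcBoundary}\Rightarrow\ref{cond:Mpie}$ and $\ref{cond:Mpie}\Rightarrow\ref{cond:MisUWDC}$ are quoted from Lemma~\ref{L:boundaryImpliesType} and Corollary~\ref{C:typeImpliesUWDC}, $\ref{cond:MisUWDC}\Rightarrow\ref{cond:Mpie}$ combines Lemma~\ref{L:locpie} with Lemma~\ref{L:typeStableOnIntersection}, and $\ref{cond:Mpie}\Rightarrow\ref{cond:MdcBoundary}$ globalizes the cone data from the proof of Corollary~\ref{C:typeImpliesUWDC} via Proposition~\ref{P:numberOfComponentsBound}. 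The only (harmless) deviation is that you apply Proposition~\ref{P:numberOfComponentsBound} to a cover of all of $M^c$, whereas the paper covers only a parallel neighbourhood of $\partial M$ and then transfers finiteness to $M^c$ via Proposition~\ref{P:numberOfComponentsBound2}.
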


\begin{proof}
	We start with the implication $\ref{cond:MisUWDC}\!\!\implies\!\!\ref{cond:Mpie}$.
	Pick $x\in \partial M$ and $v\in\sph^1$.
	By the definition there is a neighbourhood $U$ of $x$ and sets $M_1,\dots,M_{j}$ such that
	$M\cap U=U\cap\bigcup_{i=1}^{j} M_i$ and such that each set $M_{I}:=\bigcap_{i\in I} M_i$, $I\in\Sigma_j$ is $\WDC$.
	By \cite[Lemma~7.8]{PRZ18} we know that each $M_{I}$ is of type $T_j$ for some $j=1,\dots,5$ and so $M$ is of type $\T^i$ at $x$ in direction $v$ (for some $i\in\{1,2,3\}$) by Lemma~\ref{L:typeStableOnIntersection} and we are done.
	
	The implication $\ref{cond:Mpie}\!\!\implies\!\!\ref{cond:MisUWDC}$ follows from Corollary~\ref{C:typeImpliesUWDC} and the implication $\ref{cond:MdcBoundary}\!\!\implies\!\!\ref{cond:Mpie}$ follows directly from Lemma~\ref{L:boundaryImpliesType}.

	Now we prove the implication $\ref{cond:Mpie}\!\!\implies\!\!\ref{cond:MdcBoundary}$.
%	We claim that for every $x\in M$ there is $r_x>0$ such that $\partial M\cap B(x,r_x)$ is a union of finitely many DC graphs and such that $(M\cap B(x,r_x))^c$ has only finitely many connected components. 
%	To do that pick $x\in M$.
%	Let $V_i\subset \sph^1$, $i=1,2,3$, be the set of all $v$ such that $M$ is of type $\T^i$ at $x$ in direction $v$, respectively.
%	
%	Note that the sets $V_i$, $i=1,2,3$, are pairwise disjoint, $\sph^1=V_1\cup V_2\cup V_3$ and, moreover both $V_1$ and $V_2$ are open while $V_3$ is
%	 finite.
	 
	 Analogously to the proof of Corollary~\ref{C:typeImpliesUWDC} we can find for every $x\in \partial M$ some $u(x),r(x)>0$, $N(x)\in\en$ and $v^x_1,\dots,v^x_{N(x)}\in\sph^1$ such that 
	 \begin{enumerate}[label={ (\alph*)}]
	 	\item\label{cond:TiForEveryv2} for every $j=1,\dots,N(x)$ there is $i\in\{1,2,3\}$ such that $(\gamma_{x,v^{x}_j})^{-1}(M)$ is a $\widetilde{\T}^i_{r(x),u(x)}$-set at $x$ in direction $v^x_j$,
	 	\item\label{cond:A2urCoverBall2}
	 	\begin{equation*}
	 	B(x,r(x))\subset \bigcup_{j=1}^{N(x)} A^{2u(x)}_{r(x)}(x,v^x_j).
	 	\end{equation*}
	 \end{enumerate}

	 Since $\partial M$ is compact we know that there are $x_1,\dots, x_p$ such that $\partial M$ is covered by balls $B(x_i,r(x_i))$, $i=1,\dots,p$.
	 Therefore there is some $\rho>0$ such that the parallel set $(\partial M)_{\rho}$ is also covered by balls $B(x_i,r(x_i))$, $i=1,\dots,p$.
	 
	 By \ref{cond:A2urCoverBall2} we have that $(\partial M)_{\rho}$ is therefore covered by the system 
	 \begin{equation*}
	 \A\coloneqq\left\{A^{2u(x_l)}_{r(x_l)}(x_l,v^{x_l}_j):\;
	 l=1,\dots,p,\;\; j=1,\dots,N(x_l)\right\}.
	 \end{equation*}
	 
	 First note that $\A$ is a finite cover of $\partial M$ and that (by \ref{cond:TiForEveryv2}) $\partial M\cap A$ is a union of finitely many DC graphs for every $A\in\A$ and so $\partial M$ is a union of finitely many DC graphs as well.
	 
	 Moreover, by \ref{cond:TiForEveryv2} we also know that $A\setminus M$ has only finitely many connected components relatively in $A$ for each $A\in\A$.
	 
	 Since $\A$ is a finite covering of $\bigcup\A$ we know by Proposition~\ref{P:numberOfComponentsBound} that the number of connected components of $\bigcup \A$ (relatively in $\bigcup\A$) is finite.
	 And, finally, since $\bigcup\A$ contains a neighbourhood of $\partial M=\partial (M^c)$ 
	 we know that each connected component of $M^c$ has nonempty intersection with $\bigcup\A$ and so the number of connected components of $M^c$ is finite by Proposition~\ref{P:numberOfComponentsBound2} .
\end{proof}\color{black}

\section{Nested sets in plane}\label{sec:nestedSets}
\color{black}
\begin{lem}\label{L:hausdorffLimitOfUnion}
Let $\{A_i^k\}_{k=1}^\infty$, $i=1,\dots,N$, be sequences of a nonempty compact sets in $\er^d$ and suppose that $A_i^k\to A_i$ as $k\to \infty$ (in the Hausdorff distance) for some $A_i\subset\er^d$, $i=1,\dots,N$.
Put
\begin{equation*}
A^k\coloneqq\bigcup_{i=1}^N A_i^k,\; k\in\en\quad\text{and}\quad A\coloneqq\bigcup_{i=1}^N A_i.
\end{equation*}
Then $A^k\to A$ as $k\to \infty$.
\end{lem}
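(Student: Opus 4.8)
The statement is that a finite union of Hausdorff-convergent sequences of compact sets converges (in Hausdorff distance) to the union of the limits. My plan is to reduce to the case $N=2$ by an obvious induction (a finite union can be built up two sets at a time, and each $A^k$ remains nonempty compact), and then to prove the two-set case directly from the characterization~\eqref{eq:hausdorffDistance}: for nonempty compact $K,M$ one has $\dist_\H(K,M)\le\eps$ iff $M\subseteq K_\eps$ and $K\subseteq M_\eps$. So it suffices to show that for every $\eps>0$ there is $k_0$ such that for $k\ge k_0$ both $A^k\subseteq A_\eps$ and $A\subseteq (A^k)_\eps$ hold.

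The key observation is the elementary set-theoretic fact that for any sets, $(B\cup C)_\eps = B_\eps\cup C_\eps$ (the $\eps$-parallel set of a union is the union of the $\eps$-parallel sets), which follows immediately from the definition of the parallel set as $\{y:\dist(y,\cdot)\le\eps\}$ together with $\dist(y,B\cup C)=\min(\dist(y,B),\dist(y,C))$. Given $\eps>0$, pick $k_0$ so large that $\dist_\H(A_i^k,A_i)\le\eps$ for all $i=1,\dots,N$ and all $k\ge k_0$; this is possible since there are finitely many sequences. Then for $k\ge k_0$ and each $i$ we have $A_i^k\subseteq (A_i)_\eps$ and $A_i\subseteq (A_i^k)_\eps$ by~\eqref{eq:hausdorffDistance}. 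Taking unions over $i$ and using $(\bigcup_i B_i)_\eps=\bigcup_i (B_i)_\eps$ gives
\begin{equation*}
A^k=\bigcup_i A_i^k\subseteq\bigcup_i (A_i)_\eps=\Bigl(\bigcup_i A_i\Bigr)_\eps=A_\eps
\end{equation*}
and symmetrically $A\subseteq (A^k)_\eps$. By~\eqref{eq:hausdorffDistance} again, $\dist_\H(A^k,A)\le\eps$ for all $k\ge k_0$, which is exactly what we need; note also that $A$ and each $A^k$ are nonempty and compact as finite unions of such sets, so the Hausdorff distance is well defined throughout.

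There is essentially no obstacle here: the only thing to be a little careful about is the identity $(\bigcup_i B_i)_\eps=\bigcup_i (B_i)_\eps$ and the fact that one can choose a single $k_0$ working simultaneously for all $N$ sequences, which is where finiteness of $N$ is used (and is the only place the hypothesis "$N$ finite" enters). One could also write the proof entirely without the reduction to $N=2$, applying the parallel-set identity to an $N$-fold union directly, which I have done above; the inductive phrasing is optional. Everything reduces to~\eqref{eq:hausdorffDistance} and the distributivity of parallel sets over unions, so the write-up should be short.
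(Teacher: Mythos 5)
Your proof is correct and follows essentially the same route as the paper: both arguments use the characterization \eqref{eq:hausdorffDistance}, choose a single index $m$ working for all $N$ sequences, and conclude via the identity $\bigl(\bigcup_i B_i\bigr)_\eps=\bigcup_i (B_i)_\eps$. The optional induction on $N$ you mention is unnecessary, as you note, and the paper likewise argues directly with the $N$-fold union.
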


\begin{proof}
	Pick $\eps>0$. We need to prove that for some $m\in\en$, $\dist_{\H}(A^k,A)\leq\eps$ whenever $k\geq m$ which is the same that $A^k\subset A_{\eps}$ and $A\subset (A^k)_{\eps}$ whenever $k\geq m$.
	Since $A_i^k\to A_i$ there is some $m_i\in\en$ such that 
	$A_i^k\subset (A_i)_{\eps}$ and $A_i\subset (A_i^k)_{\eps}$ whenever $k\geq m_i$. 
	Therefore if $k\geq m\coloneqq \max_{i}m_i$ we have
	\begin{equation*}
	(A^k)_\eps=\biggl(\bigcup_{i=1}^N A_i^k\biggr)_\eps
	=\bigcup_{i=1}^N (A_i^k)_\eps\supset \bigcup_{i=1}^N A_i=A
	\end{equation*}
	and
	\begin{equation*}
	A^k=\bigcup_{i=1}^N A_i^k
	\subset\bigcup_{i=1}^N (A_i)_\eps= \biggl(\bigcup_{i=1}^N A_i\biggr)_\eps=A_\eps,
	\end{equation*}
	which concludes the proof
\end{proof}

\begin{lem}\label{L:limitOfGraphs}
	Let $L>0$ and let $f_i:[a_i,b_i]\to [0,\infty]$ be $L$-Lipschitz $\C^2$ functions such that $\int_{a_i}^{b_i}|f_i''(t)|\;dt<L$, $i\in\en$.
	Suppose that there is a non-empty compact set $M\subseteq \er^2$ such that $\graph f_i\to M$ in the Hausdorff distance as $\i\to\infty$.
	Then there are $-\infty<a\leq b<\infty$ and a DCR function $f$ on $[a,b]$ such that $M= \graph f$.
\end{lem}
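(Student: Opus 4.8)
The plan is to realise $M$ as the graph of a function obtained as a uniform limit of suitable extensions of the $f_i$, and then to exploit the uniform bound $\int_{a_i}^{b_i}|f_i''|<L$ to show that this limit has a derivative of bounded variation, hence is DC. As set-up and reduction: since the orthogonal projection onto the first coordinate axis is $1$-Lipschitz, $[a_i,b_i]$ (the image of $\graph f_i$ under this projection) converges in the Hausdorff distance to the corresponding projection of $M$; as a Hausdorff limit of connected compact sets is connected, this projection is a compact interval $[a,b]$ and $a_i\to a$, $b_i\to b$. If $a=b$, then $|f_i(s)-f_i(t)|\le L(b_i-a_i)\to 0$, so $\diam\graph f_i\to 0$ and $M$ is a single point $(a,c)$, which is the graph of the constant (hence DCR) function $c$ on $[a,a]$; so assume $a<b$. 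Since $M$ is bounded, $\graph f_i\subset M_1$ for $i$ large, so $\sup_i\|f_i\|_\infty<\infty$. Extend each $f_i$ to an $L$-Lipschitz function $\tilde f_i:\er\to\er$ that is constant on $(-\infty,a_i]$ and on $[b_i,\infty)$. The family $\{\tilde f_i\}$ is equi-Lipschitz and uniformly bounded, so by the Arzel\`a--Ascoli theorem a subsequence (to which we pass) converges uniformly on compact sets to an $L$-Lipschitz $f:\er\to\er$. Using $a_i\to a$, $b_i\to b$ and this uniform convergence, one checks directly, by comparing nearest points, that $\graph f_i\to\graph(f|_{[a,b]})$ in the Hausdorff distance; by uniqueness of Hausdorff limits, $M=\graph(f|_{[a,b]})$.

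It then remains to show that $f$ is DC on some open interval containing $[a,b]$, which makes $f|_{[a,b]}$ DCR. Since $f_i$ is $C^2$, the derivative $f_i'$ is $C^1$ (hence absolutely continuous) on $[a_i,b_i]$ with $\var(f_i')=\int_{a_i}^{b_i}|f_i''|<L$ and $\|f_i'\|_\infty\le L$; hence the a.e.-defined derivative $\tilde f_i'$, which vanishes off $[a_i,b_i]$, satisfies $\|\tilde f_i'\|_\infty\le L$ and, taking into account the jumps of size $\le L$ at $a_i$ and $b_i$, has total variation $<3L$ on $\er$. By Helly's selection theorem, after passing to a further subsequence $\tilde f_i'\to h$ pointwise on $\er$ with $\var(h)<\infty$ and $\|h\|_\infty\le L$. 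Fixing $t_0<a$ with $t_0<a_i$ for all large $i$, we have $\tilde f_i(x)=\tilde f_i(t_0)+\int_{t_0}^x\tilde f_i'(t)\,dt$, and letting $i\to\infty$, using dominated convergence (as $|\tilde f_i'|\le L$), yields $f(x)=f(t_0)+\int_{t_0}^x h(t)\,dt$ for every $x$. Writing $h=h_1-h_2$ with $h_1,h_2$ bounded and nondecreasing, the functions $x\mapsto\int_{t_0}^x h_j$ are convex, so $f$ is a difference of two convex functions on $\er$; in particular $f|_{[a,b]}$ is DCR and $M=\graph(f|_{[a,b]})$, as required.

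I expect the only genuinely substantive point to be the identification of the pointwise Helly limit $h$ of the derivatives $\tilde f_i'$ with the derivative of the uniform limit $f$ — that is, the passage from ``the $f_i'$ have uniformly bounded variation'' to ``$f$ has a derivative of bounded variation''; everything else (the moving domains $[a_i,b_i]$, the degenerate case $a=b$, and the Hausdorff convergence of the graphs) is routine bookkeeping. Alternatively one could argue that the curves $\graph f_i$ have turn $<L$ uniformly, invoke lower semicontinuity of the turn under convergence of curves to conclude that the limit curve $M$ has finite turn, and then apply Lemma~\ref{L:finiteTurnIsDC}; but the direct argument above avoids developing that machinery.
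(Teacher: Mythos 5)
Your proof is correct. The set-up (projecting to get $[a,b]$, the degenerate case $a=b$, identifying $M$ as the graph of the uniform limit of extensions of the $f_i$) matches the paper's, but your treatment of the key step --- showing the limit function is DC --- is genuinely different. The paper decomposes each $f_i$ \emph{before} passing to the limit: it sets $g_i(x)=\int_a^x\int_a^t((\tilde f_i)'')_+$ and $h_i(x)=\int_a^x\int_a^t((\tilde f_i)'')_-$, so that $\tilde f_i=g_i-h_i+\phi_i$ with $g_i,h_i$ convex and uniformly Lipschitz and $\phi_i$ affine, and then applies Arzel\`a--Ascoli separately to $\{g_i\}$, $\{h_i\}$, $\{\phi_i\}$ to exhibit $f$ as a difference of convex functions. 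You instead pass to the limit at the level of the derivatives: Helly's selection theorem on $\{\tilde f_i'\}$ (with the uniform variation bound $<3L$ coming from $\int|f_i''|<L$ plus the two boundary jumps), identification of the Helly limit $h$ with $f'$ via dominated convergence, and only then a Jordan decomposition $h=h_1-h_2$ to produce the convex summands. The two arguments are dual implementations of the same idea (bounded variation of the derivatives is what makes the limit DC); yours uses one extra classical tool (Helly) but has the minor advantage that the constant extension of $f_i$ to all of $\er$ is trivial and its non-smoothness at $a_i,b_i$ is absorbed into the BV framework, whereas the paper's extension to $L$-Lipschitz $\C^2$ functions on $[a,b]$ with the same curvature bound, stated as ``not difficult to see,'' requires a little care. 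Both are complete proofs.
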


\begin{proof}
	First note that since $\graph f_i\to M$ in the Hausdorff distance and since all sets $\graph f_i$ are connected, we have that $[a_i,b_i]=\Pi_{V}(\graph f_i)\to \Pi_{V}(M)\eqqcolon [a,b]$ in the Hausdorff distance, where $V\coloneqq \spa ((1,0))$. 
	In particular, $a_i\to a$ and $b_i\to b$.
	Since all $f_i$ are Lipschitz with the same constant we easily obtain that $M_x$ is a singleton for every $x\in[a,b]$.
	So we can define $f:[a,b]\to\er$ in such a way that $\{y: (x,y)\in M \}=\{f(x)\}$, $x\in[a,b]$.
	Therefore $M=\graph f$ and it remains to prove that $f$ is DCR on $[a,b]$.
	The trivial case $a=b$ is obvious and so we will assume $a<b$.
	
	It is not difficult to see that we can extend/restrict functions $f_i$ to obtain $L$-Lipschitz $\C^2$ functions $\tilde f_i:[a,b]\to \er$ such that $\graph\tilde f_i\to \graph f$ and such that $\int_{a}^{b}| (\tilde f_i)''(t)|\;dt<L$, $i\in\en$.
	Note that $\graph\tilde f_i\to \graph f$ implies that $\tilde f_i\to f$ uniformly on $[a,b]$. 
	
	Define for $x\in[a,b]$
	\begin{equation*}
	g_i(x)\coloneqq \int_a^x\int_a^t ( (\tilde f_i)''(s))_+\,ds\,dt\quad\text{and}\quad 
	h_i(x)\coloneqq \int_a^x\int_a^t ((\tilde f_i)''(s))_-\,ds\,dt.
	\end{equation*}
	Then $f_i=g_i-h_i+\phi_i$, where $\phi_i(x)=(f_i)_{+}'(a)\cdot(x-a)+f_i(a)$.
	Moreover, $g_i$ and $h_i$ are convex and $(b-a)L$-Lipschitz $\C^2$ functions on $[a,b]$ and $\phi_i$ is $L$-Lipschitz affine function on $[a,b]$, $i\in\en$.
	Additionally, $g_i(a)=h_i(a)=0$ and $\phi_i(a)\to f(a)$ and so the sequences $\{g_i\}$, $\{h_i\}$ and $\{\psi_i\}$ are uniformly bounded.
	Hence, by a standard (multiple) application of the Arzel\`a-Ascoli theorem, we can find convergent subsequences
	$\{g_{i_k}\}$, $\{h_{i_k}\}$ and $\{\phi_{i_k}\}$ converging to functions $g$, $h$ and $\psi$, respectively. Clearly, $g$ and $h$ are convex Lipschitz, and $\phi$ is affine.
	Since $g_{i_k}-h_{i_k}+\phi_{i_k}=f_{i_k}\to f$ as $k\to\infty$ we obtain that $f=g-h+\phi$ and so $f$ is DCR.
\end{proof}

%\begin{cor}
%	Suppose that $$
%\end{cor}

%
%\begin{prop}
%	Let $f_1,f_2,\dots$ be a sequence of $\C^2$ functions on an interval 
%\end{prop}

\begin{prop}\label{prop:Ner2SubsetUWDC}
	$\N(\er^2)\subset\UWDC(\er^2)$.
\end{prop}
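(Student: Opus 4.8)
The plan is to verify condition~\ref{cond:MdcBoundary} of Theorem~\ref{T:characterizationUWDC} for $A$, i.e.\ that $A^c$ has finitely many connected components and $\partial A$ is a union of finitely many DC graphs; then $A\in\UWDC(\er^2)$ follows. We may assume $A\neq\varnothing$ (otherwise $\partial A=\varnothing$, $A^c=\er^2$, and \ref{cond:MdcBoundary} holds trivially). Fix a sequence $M_1\supset M_2\supset\cdots$ of compact $\C^2$ domains with $\bigcap_nM_n=A$ and $\bM(N_{M_n})\leq C$ for all $n$. Each $M_n$ is nonempty, so $\partial M_n$ is a nonempty compact $\C^2$ $1$-manifold, hence a finite disjoint union of $\C^2$ Jordan curves; and since a compact $\C^2$ domain has positive reach, $N_{M_n}$ is the current carried by the unit normal bundle of $M_n$, whose mass, computed via the arc-length parametrisation of $\partial M_n$ with curvature $\kappa_n$, equals $\int_{\partial M_n}\sqrt{1+\kappa_n^2}\,d\H^1$. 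Thus $\H^1(\partial M_n)\leq C$ and $\int_{\partial M_n}|\kappa_n|\,d\H^1\leq C$ for every $n$, and by the Umlaufsatz (each Jordan component has total absolute curvature at least $2\pi$) the number of components of $\partial M_n$ is at most $C/(2\pi)$.

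First I would handle the complement. Since $M_n^c$ is open and $\overline{M_n^c}=M_n^c\cup\partial M_n$, the set $M_n^c$ is a union of connected components of $\er^2\setminus\partial M_n$, and the latter has at most $C/(2\pi)+1$ components (adding a Jordan curve disjoint from a finite family of disjoint Jordan curves raises the number of complementary regions by at most one); so $M_n^c$ has at most $K_0:=C/(2\pi)+1$ components for every $n$. As $A^c=\bigcup_nM_n^c$ is an increasing union, any $p_1,\dots,p_m$ in distinct components of $A^c$ lie in $M_n^c$ for large $n$, and two of them in a common component of $M_n^c$ would be joined by a path inside $M_n^c\subset A^c$ and hence lie in a common component of $A^c$; therefore $m\leq K_0$, so $A^c$ has at most $K_0$ components.

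Next I would build the DC graphs. Fix once and for all a small universal $\alpha>0$ and $L>0$ (say $\alpha=\tfrac12$, $L=1$) so that any $\C^2$ arc whose unit tangent turns by less than $\alpha$ is, in a suitable direction $v\in\sph^1$, the graph of an $L$-Lipschitz $\C^2$ function $f$ satisfying $\int|f''|<L$; this rests on $|f''|=|\kappa|\,(1+(f')^2)^{3/2}$ and the change of variables $\int|f''|\,dx=\int|\kappa|\,(1+(f')^2)\,ds$ along the arc, so that $\tan\alpha\leq L$ and $\alpha\,(1+\tan^2\alpha)<L$ suffice. Subdividing each Jordan component of $\partial M_n$ at the points where its tangent has turned by $\alpha$ (fewer than $(\text{turn})/\alpha+1$ of them), and using that the total turn is at most $C$ and the number of components at most $C/(2\pi)$, one obtains a fixed $K_1\in\en$ and arcs $\Gamma_n^1,\dots,\Gamma_n^{K_1}$ (some possibly empty) with $\partial M_n=\bigcup_k\Gamma_n^k$, each nonempty $\Gamma_n^k$ being in some direction $v_n^k\in\sph^1$ the graph of an $L$-Lipschitz $\C^2$ function with $\int|f''|<L$. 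All these arcs lie in the fixed compact set $M_1$. Passing to a subsequence I would arrange, for each $k$, that $\Gamma_n^k$ is either eventually empty (then discard it) or nonempty for all $n$ with $v_n^k\to v^k\in\sph^1$ and $\Gamma_n^k\to\Gamma^k$ in the Hausdorff distance for some nonempty compact $\Gamma^k$ (Blaschke selection in $M_1$); applying a rotation carrying $v_n^k$ to $(0,1)$ and a translation, Lemma~\ref{L:limitOfGraphs} then applies with this common $L$ and shows that each $\Gamma^k$ is a DC graph.

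Finally I would identify $\bigcup_k\Gamma^k$ with $\partial A$. By Lemma~\ref{L:hausdorffLimitOfUnion}, $\partial M_n\to\Gamma:=\bigcup_k\Gamma^k$ in the Hausdorff distance along the chosen subsequence. If $x\in\partial A$ and $\eps>0$, choose $y\in B(x,\eps)\setminus A$; then $y\notin M_n$ for large $n$ while $x\in A\subset M_n$, so $[x,y]$ meets $\partial M_n$ and $\dist(x,\partial M_n)<\eps$, whence $\dist(x,\partial M_n)\to0$ and $x\in\Gamma$. Conversely, for $z\in\Gamma$ pick $z_n\in\partial M_n$ with $z_n\to z$; then $z_n\in M_m$ for $n\geq m$, so $z\in M_m$ for all $m$, i.e.\ $z\in A$, and $z\in\INt A$ would force $B(z_n,\delta)\subset A\subset M_n$ for large $n$, contradicting $z_n\in\partial M_n$; hence $z\in\partial A$. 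Thus $\partial A=\Gamma$ is a union of finitely many DC graphs, condition~\ref{cond:MdcBoundary} holds, and $A\in\UWDC(\er^2)$ by Theorem~\ref{T:characterizationUWDC}. The main obstacle is the uniform subdivision of the $\partial M_n$ into boundedly many graph-arcs with $n$-independent Lipschitz and total-curvature bounds, together with the bookkeeping of subsequences, rotations and degenerate arcs required to pass to the limit through Lemma~\ref{L:limitOfGraphs}.
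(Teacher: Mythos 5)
Your proposal is correct and follows essentially the same route as the paper's proof: reduce to condition \ref{cond:MdcBoundary} of Theorem~\ref{T:characterizationUWDC}, use the mass bound to control the number of boundary components and of complementary components, cut each $\partial M_n$ into a uniformly bounded number of uniformly Lipschitz graph-arcs with uniformly bounded $\int|f''|$, and pass to Hausdorff limits via Lemmas~\ref{L:hausdorffLimitOfUnion} and~\ref{L:limitOfGraphs}. The only notable difference is that you supply details the paper leaves implicit (the Umlaufsatz count, the increasing-union argument for components of $A^c$, and an actual proof that $\partial M_{n_k}\to\partial A$ in the Hausdorff distance, which the paper merely asserts).
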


\begin{proof}
	Pick $M\in\N(\er^2)$.
By Theorem~\ref{T:characterizationUWDC} it is enough to prove that $M^c$ has finitely many connected components and that $\partial M$ is a union of finitely many DC graphs.

Let $M_i$, $i\in\en$, be compact $\C^2$ smooth domains such that 
\begin{equation}\label{E:intersectionOfM_i}
\bigcap_{i=1}^\infty M_i=M 
\end{equation}
and such that
\begin{equation}\label{E:boundOnMassOfM_i}
\bM(N_{M_i})\leq L<\infty.
\end{equation}

First observe that by \eqref{E:boundOnMassOfM_i} there is $N\in\en$ such that $\widetilde C_0^{var}(M_i)\leq N$, $i=1,\dots$, which, in particular, implies that
\begin{equation}\label{E:C_iBounded}
C_i\leq N,\quad i=1,\dots,
\end{equation}
where $C_i$ is the number of connected components of $\partial M_i$.
Since the number of connected components of $M_i^c$ is bounded by $C_i$ and therefore by $N$ we obtain (using \eqref{E:intersectionOfM_i})
that $M^c$ has finitely many connected components.

It remains to prove that $\partial M$ is a union of finitely many DC graphs.
By \eqref{E:C_iBounded} we can find constant subsequence $\{C_{i_k}\}$.
Let $C$ be the constant value of that subsequence and let $M_{k}^l$ be the components of $M_{i_k}^c$, $k=1,\dots$, $l=1,\dots,C$.

Next we claim that the boundary of each $M_{k}^l$ is a union of at most $5N$ $1$-Lipschitz graphs.

To do that pick some $k$ and $l$ as above and put $S\coloneqq\partial M_{k}^l$.
Then there is a $1$-periodic $\C^2$ curve $\gamma:\er\to\er^2$ such that $S=\Im(\gamma)$ and that $\gamma|_{[0,1)}$ is injective.
Let $\nu:\er\to\sph^1$ be such that $\nu(t)$ is a outer normal to $M_{k}^l$ at $\gamma(t)$.
Note that 
\begin{equation}\label{eq:CvarGeqDist}
\widetilde C^{var}_0(M_{k}^l,\gamma((a,b)))\geq \frac{\H^1(\nu((a,b)))}{2\pi}\geq \frac{\rho(\nu(a),\nu(b))}{2\pi}
\end{equation}
for any $a\leq b$.

Put $t_1\coloneqq 1$ and define 
\begin{equation*}
t_{j+1}\coloneqq \min\left\{t\geq t_j:\rho(\nu(t_k),\nu(t))\geq\frac{\pi}{2}\right\},\quad j\in\en,\,j\geq 2.
\end{equation*}
Note that we can take the minimum in the above definition since $\nu$ is continuous.

Clearly $\rho(\nu(t_k),\nu(t_{k+1}))=\frac{\pi}{2}$, $k\in\en$, and so \eqref{eq:CvarGeqDist} implies that
\begin{equation}\label{eq:curvatureOfPiecesOfGamma}
N\geq \widetilde C^{var}_0(M_{k}^l,\gamma((0,1)))
\geq \sum_{j=1}^{m}\widetilde C^{var}_0(M_{k}^l,\gamma((t_j,t_{j+1})))
\geq\frac{m}{4},
\end{equation}
provided $t_{m+1}\in[0,1)$.
%Let $p(k,l)\in\en$ be the maximal index such that $t_{p(k,l)}\in[0,1]$.
Put $G^{k}_{l,j}\coloneqq\gamma([t_j,t_{j+1}])$, clearly $S\subset \bigcup_{j=1}^{5N} G^{k}_{l,j}$ by \eqref{eq:curvatureOfPiecesOfGamma}.

To finish the proof of the claim it is enough to prove that $G^{k}_{l,j}$ is always a $1$-Lipschitz graph.
To do this pick $1\leq j\leq 5N$ and let $\tilde\nu$ be the middle point of the shortest arc connecting $\nu(t_{j})$ and $\nu(t_{j+1})$.
Then $\rho(\tilde\nu,\nu(t))\leq \frac{\pi}{4}$ for every $\nu\in\nu([t_j,t_{j+1}])$.
Without the loss of generality we can assume that $\tilde\nu=(0,1)$.
Pick $(\sin\alpha,\cos\alpha)=\nu(t)$ for some $t\in[t_j,t_{j+1}]$ and suppose that $\gamma=(\gamma_1,\gamma_2)$ is parametrized by the arc length.
Then $\gamma'(t)=(\cos\alpha,-\sin\alpha)$ with $\alpha\in[-\frac{\pi}{4},\frac{\pi}{4}]$ and so
\begin{equation}\label{eq:lipConstEstimate}
\left|\frac{\gamma_2'(t)}{\gamma_1'(t)}\right|=|\tan\alpha|\leq 1.
\end{equation}
Define $f(u)=\gamma_2(\gamma_1^{-1}(u))$, $u\in[\gamma_1(t_j),\gamma_1(t_{j+1})]$.
The function $f$ is well defined since by the above $\gamma_1$ is increasing.
Easy computation gives us $f'(u)=\frac{\gamma_2'(\gamma_1^{-1}(u))}{\gamma_1'(\gamma_1^{-1}(u))}$ and so by \eqref{eq:lipConstEstimate} we have $|f'|\leq 1$ and so $f$ is $1$-Lipschitz on $[\gamma_1(t_j),\gamma_1(t_{j+1})]$.
Clearly $G^{k}_{l,j}=\graph f$ and the proof of the claim is finished.

Define $A^{k}_{m}$, $k\in\en$, $m=1,\dots,5NC$, by
\begin{equation*}
A^{k}_{5N(l-1)+j}\coloneqq G^{k}_{l,j},\quad l=1,\dots,C,\; j=1,\dots, 5N.
\end{equation*}
By the construction 
$ A^k\coloneqq\partial M_{i_k}=\bigcup_{m=1}^{5NC} A^{k}_{m}$.
Passing to a subsequence (at most $5NC$ times) we may assume that each sequence $\{A^{k}_{m}\}$, $m=1,\dots, 5NC$, converges in the Hausdorff distance with a limit $A_m$ as $k\to \infty$.
We also have $\partial M_{i_k}\to \partial M$
and so Lemma~\ref{L:hausdorffLimitOfUnion} implies that $M=\bigcup_{m=1}^{5NC} A_m$.
Therefore to complete the proof of the lemma it is enough to prove that each $A_m$ is a DC graph.

Fix $m\in\{1,\dots,5NC\}$. First we prove that there is some $v\in\sph^{1}$ and a subsequence $A^{k_l}_{m}$ such that $A^{k_l}_{m}$ is a $2$-Lipschitz graph in the direction $v$ for every $l$.

Let $v_m \in\sph^{1}$ be such that $A_m^k$ is a $1$-Lipschitz graph in the direction $v_k$. Then there is a subsequence $v_{k_l}$ and some $v\in\sph^{d-1}$ such that $|v-v_{k_l}|\leq \frac{1}{10}$, $l\in\en$.
Then by Lemma~\ref{L:LipschitzGraphEquivalence}
\begin{equation*}
\begin{aligned}
|(A-B)\cdot v|&\leq |(A-B)\cdot v_{k_l}|+|(A-B)\cdot (v-v_{k_l})|\\
&\leq \frac{1}{\sqrt{2}}|A-B|+|A-B|\cdot|v-v_{k_l}|
\leq \left(\frac{1}{\sqrt{2}}+\frac{1}{10}\right)|A-B|\\
&\leq \frac{2}{\sqrt{5}}|A-B|
\end{aligned}
\end{equation*}
whenever $A,B\in A_{m}^{k_l}$,
and so (again by Lemma~\ref{L:LipschitzGraphEquivalence})
each $ A_{m}^{k_l}$ is a $2$-Lipschitz graph in the direction $v$.

Next pick such subsequence and such $v$ and denote $B_l\coloneqq A_m^{k_l}$.
We may assume that $v=(0,1)$, which in other words means that there are intervals $[a_l,b_l]$ and $2$-Lipschitz functions $f_l:[a_l,b_l]\to\er$ such that $B_l=\graph f_l$, $l\in\en$.
We may also assume that $f_l\geq 0$ for every $l$.

Since $\gamma_m$ is a $\C^2$ curve we know that each $f_l$ is also $\C^2$.
Moreover,
\begin{equation*}
L\geq \widetilde\C^{var}_0(M,\graph f_i)
=\int_{a_i}^{b_i}\frac{|f''|}{\left(1+(f')^2\right)^{\frac{3}{2}}}
\geq \int_{a_i}^{b_i}\frac{|f''|}{\left(1+2^2\right)^{\frac{3}{2}}}.
\end{equation*}
and so the sequence $f_l$ satisfies the assumptions of Lemma~\ref{L:limitOfGraphs}. Since $A_m$ is the limit of the sequence $\{\graph f_l\}$ in the Hausdorff distance, we obtain that $A_m$ is a DC graph, which concludes the proof of the theorem.
\end{proof}\color{black}

\color{black}
\begin{cor}
	Let $M$ be a compact subanalytic set in $\er^2$. Then $M$ is $\UWDC$.
\end{cor}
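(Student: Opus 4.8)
The plan is to obtain this as a one-line consequence of the two propositions just proved. Writing $M$ for our compact subanalytic set in $\er^2$, I would first dispose of the degenerate case $M=\varnothing$, where membership in $\UWDC(\er^2)$ holds vacuously since Definition~\ref{def:UWDC} imposes a condition only at points $x\in M$.

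For $M\neq\varnothing$, Proposition~\ref{prop:subanalytic} (applied with $d=2$) gives $M\in\N(\er^2)$, that is, there is a decreasing sequence $M_1\supset M_2\supset\cdots$ of compact $\C^2$-smooth domains with $\bigcap_n M_n=M$ whose normal cycles have uniformly bounded mass. Then Proposition~\ref{prop:Ner2SubsetUWDC}, i.e.\ the inclusion $\N(\er^2)\subset\UWDC(\er^2)$, immediately yields $M\in\UWDC(\er^2)$, which is the assertion.

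I do not expect any genuine obstacle here: the corollary is a formal combination of Propositions~\ref{prop:subanalytic} and~\ref{prop:Ner2SubsetUWDC}. All the substance sits in the latter, which feeds the outer $\C^2$ exhaustion through the total-curvature bound to control simultaneously the number of connected components of the complement and the number of DC graphs needed to cover $\partial M$ (using Lemmas~\ref{L:hausdorffLimitOfUnion} and~\ref{L:limitOfGraphs} to pass to the Hausdorff limit), and then invokes the characterisation Theorem~\ref{T:characterizationUWDC}. The only points requiring a moment's care, namely the empty set and sets with empty interior, are both handled directly by the relevant definitions, so nothing beyond citing the two propositions is needed.
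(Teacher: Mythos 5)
Your proof is correct and is exactly the paper's argument: apply Proposition~\ref{prop:subanalytic} to get $M\in\N(\er^2)$ and then Proposition~\ref{prop:Ner2SubsetUWDC} to conclude $M\in\UWDC(\er^2)$. The extra remark about the empty set is a harmless (and reasonable) addition, since Proposition~\ref{prop:subanalytic} is stated for nonempty sets.
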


\begin{proof}
	Since $M\in\N(\er^2)$ by Proposition~\ref{prop:subanalytic} we can just apply Proposition~\ref{prop:Ner2SubsetUWDC}.
\end{proof}

%\color{blue}
%\begin{lem}
%	Let $M\subset\er^2$ be such that $\partial M$ is a topological manifolda. And suppose that $M$ admits a normal cycle with the index function satisfying \ref{Local_index} for  $\H^{d-1}$-almost all $n\in\sph$ and all $x\in\er^d$.
%	Then $M$ is $\UWDC$.
%\end{lem}
%
%\begin{proof}
%	Let...
%\end{proof}
%\color{black}

%\begin{cor}\label{pr:finiteTurn}
%	Let $\gamma$ be a Jordan curve in $\er^2$ of finite turn.
%	Then $\gamma$ is a union of finitely many DC graphs.
%\end{cor}
%\color{blue}
%\begin{proof}
%	...
%\end{proof}\color{black}

\begin{lem}\label{L:lipschitzIsWDC}
	Let $M\subset\er^2$ be a compact Lipschitz domain. Suppose that $M$ admits the normal cycle and that $M$ satisfies condition (I).
	Then $M$ is $\UWDC$.
\end{lem}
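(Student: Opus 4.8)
The plan is to combine the characterization from Theorem~\ref{T:characterizationUWDC} with the structure theory of Lipschitz domains in $\er^2$ developed in Lemma~\ref{lem:LipsechitzParts}, using conditions (I) and the existence of the normal cycle to control the behaviour of $\partial M$ near points where the boundary ``folds''. By Theorem~\ref{T:characterizationUWDC} it suffices to show that $M^c$ has finitely many connected components and that $\partial M$ is a union of finitely many DC graphs. Since $M$ is a compact Lipschitz domain, Lemma~\ref{lem:LipsechitzParts}\ref{cond:componentsJordan} tells us that $\partial M$ consists of finitely many Jordan curves $\gamma_1,\dots,\gamma_m$, and part \ref{cond:JordanPartition} gives, for each, a finite partition into arcs that are Lipschitz graphs. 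In particular $M^c$ has finitely many connected components (each bounded complementary component is enclosed by one of the $\gamma_j$, and there is exactly one unbounded component), so the first condition in \ref{cond:MdcBoundary} is automatic, and the whole problem reduces to upgrading each Lipschitz-graph arc to a DC graph.

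The key step is therefore local: fix an arc $P \subset \partial M$ which is an $L$-Lipschitz graph, say $P = \graph f$ with $f : [a,b] \to \er$ Lipschitz, and show $f$ is DCR. Here is where I would invoke the normal cycle and condition (I). The idea is that the one-dimensional ``curvature'' of $\partial M$ — controlled by the variational measure $C_0^{var}(M,\cdot)$ coming from $N_M$ — bounds the turning of the arc. More precisely, I would argue that $P$, parametrized by arc length, has finite turn in the sense of Section~\ref{subs:LipADCgRAPHS} on curves of finite turn: a small arc of $\partial M$ around any point $x$ lies, by condition (I), in a way that forces the normal directions to ``fan out'' monotonically where the boundary is one-sided, so the total angular variation of the tangent along $P$ is dominated by $C_0^{var}(M, \cdot)$ evaluated on the relevant piece (this is the same mechanism used in the proof of Proposition~\ref{prop:Ner2SubsetUWDC}, cf.\ \eqref{eq:CvarGeqDist}–\eqref{eq:curvatureOfPiecesOfGamma}, but now applied directly to $\partial M$ rather than to approximating smooth domains). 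Since $C_0^{var}(M, \er^2) < \infty$ for a set admitting a normal cycle, the turn of $P$ is finite. Then Lemma~\ref{L:finiteTurnIsDC} applies directly: a curve of finite turn whose image is a Lipschitz graph is a DC graph. Running this over the finitely many arcs from Lemma~\ref{lem:LipsechitzParts}\ref{cond:JordanPartition} exhibits $\partial M$ as a union of finitely many DC graphs, and Theorem~\ref{T:characterizationUWDC} gives $M \in \UWDC$.

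The main obstacle I anticipate is the rigorous comparison between the ``angular turn'' of a boundary arc of $M$ and the normal-cycle curvature $C_0^{var}(M, \cdot)$, i.e.\ making precise the estimate analogous to \eqref{eq:CvarGeqDist}. For a smooth domain this is the classical relation between total absolute curvature and the length of the Gauss image; for a general Lipschitz domain admitting a normal cycle one must extract, from the current $N_M$ restricted to $\partial M \times \sph^1$, a lower bound on the mass in terms of the oscillation of admissible normal directions, and it is precisely condition (I) that guarantees the index function $\ii_M$ does not vanish on the normal cone where the boundary is locally supported by a convex set, so that these normals genuinely contribute to the mass. Handling the possibility that $\partial M$ is two-sided near $x$ (so that locally $M$ looks like a thin ``sausage'' region squeezed between two graphs) requires splitting the arc using Remark~\ref{rem:DCgraphProperties}-type arguments and treating the two one-sided pieces separately; this is bookkeeping but needs care. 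Once the finite-turn bound is in hand, the rest is a routine application of Lemmas~\ref{lem:LipsechitzParts}, \ref{L:finiteTurnIsDC} and Theorem~\ref{T:characterizationUWDC}.
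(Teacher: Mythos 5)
Your overall strategy coincides with the paper's: reduce via Theorem~\ref{T:characterizationUWDC} to showing that $M^c$ has finitely many components and that $\partial M$ is a finite union of DC graphs, decompose $\partial M$ into Jordan curves and Lipschitz-graph arcs by Lemma~\ref{lem:LipsechitzParts}, prove that the boundary curves have finite turn by comparing their angular turn with $C_0^{var}(M,\cdot)$, and finish with Lemma~\ref{L:finiteTurnIsDC}. However, the step you single out as ``the main obstacle'' --- the quantitative comparison between the turn of a boundary arc and the curvature measure --- is precisely the content of the paper's proof, and you do not supply it, so the argument is incomplete exactly where the normal cycle and condition (I) must be used. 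The missing idea is a convex-hull/supporting-line construction: for parameters $s<t<u$ put $A^-=\gamma_i(s)$, $A=\gamma_i(t)$, $A^+=\gamma_i(u)$, $K=\co\gamma_i([s,u])$ and $T=\co\{A^-,A,A^+\}$, and note that $\H^1(\nor(T,A))=\rho(\alpha,\beta)$ is the chordal turn at $A$. For every $v$ in the interior of $\nor(T,A)$ the set $L$ of points of $K$ minimizing $x\mapsto x\cdot v$ avoids $A^\pm$, so an endpoint $x$ of $L$ lies on $\gamma_i((s,u))$; for small $r$ one has $\partial M\cap U(x,r)\subset K$ and $x\in\partial M\cap\partial K$, so condition (I) applied to the convex set $K$ gives $\ii_M(x,v)\neq 0$ for almost every such $v$, whence $C_0^{var}(M,\gamma_i((s,u)))\geq\rho(\alpha,\beta)$. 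Note also that the turn in Section~\ref{subs:LipADCgRAPHS} is defined through chords rather than tangents, and that summing the above estimate over a partition double-counts each small arc, so the correct conclusion is a bound by $2\,C_0^{var}(M,\gamma_i((0,1)))<\infty$.

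Two smaller remarks. Your justification that $M^c$ has finitely many components (finitely many disjoint Jordan boundary curves cut the plane into finitely many pieces) is a valid alternative to the paper's one-line appeal to the existence of the normal cycle. On the other hand, the ``thin sausage'' configuration you worry about cannot occur here: $M$ is a Lipschitz domain, so near every boundary point it is the subgraph of a single Lipschitz function and the boundary is one-sided; no splitting in the spirit of Remark~\ref{rem:DCgraphProperties} is needed (that device is used in the proof of Lemma~\ref{L:boundaryImpliesType}, which concerns a different implication).
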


\begin{proof}
	By Lemma~\ref{lem:LipsechitzParts}~(a) we can find pairwise disjoint Jordan curves $\gamma_i:[0,1]\to\er^2$, $i=1,\dots,j$ such that $\partial M=\bigcup_{i}\gamma_i$.
	Pick $i\in \{1,\dots,j\}$.
	We want to prove that $\gamma_i$ has finite turn.
	We start by proving the following claim.
	
	{\it Claim:} suppose that $0\leq s < t < u < 1$.
	Put $A\coloneqq\gamma_i(t)$, $A^-\coloneqq\gamma_i(s)$ and $A^+\coloneqq \gamma_i(u)$.
	Then 
	\begin{equation*}
	C^{var}_0(M, \gamma_i(s,u))\geq \rho(\alpha,\beta)
	\end{equation*}
	where
	\begin{equation*}
	\alpha = \frac{A^+-A}{|A^+-A|}
	\quad\text{and}\quad 
	\beta=\frac{A-A^-}{|A-A^-|}.
	\end{equation*}
	To prove the claim pick some $0\leq s < t < u < 1$ and put $K=\co\gamma_i([s,u])$ and $T=\co \{A^-,A,A^+\}$.
	Note that $K$ is compact since $\gamma_i$ is continuous.
	First note that $\H^1(\nor(T,A))=\rho(\alpha,\beta)$ and therefore it is enough to prove that 
	\begin{equation*}
	C^{var}_0(M, \gamma_i(s,u))\geq \H^1(\nor(T,A)^\circ).
	\end{equation*}
	Choose some $v\in \nor(T,A)^\circ$.
	Put
	\begin{equation*}
	L\coloneqq \{x\in K: x\cdot v=\min\{y\cdot v: y\in K\} \}.
	\end{equation*}
	
	First note that $v\in\nor(K,x)$ for every $x\in L$.
	Moreover, $L$ is a compact line segment and $L\subset K\setminus\{A^\pm\}$ (since clearly $A\cdot v > A^\pm\cdot v$).
	Let $x$ be an endpoint of $L$.
	From the definition of $K$ we obtain that $x\in \gamma_{i}([s,u])\setminus\{A^\pm\}=\gamma_{i}((s,u))$
	and so there is $s<p<u$ such that $\gamma_i(p)=x$.
	Due to the continuity of $\gamma_i$ there is $r>0$ such that $\partial M \cap U(x,r)\subset \gamma_{i}((s,u))\subset K$.
	Also, $x\in U(x,r)\cap \partial M \cap \partial K$.
	
	To prove the claim it is now sufficient to use the fact that $M$ satisfies condition (I) together with a well known fact that
	\begin{equation*}
	C^{var}_0(M, X)\geq \H^1(\{v: \exists x\in X, \i_M(x,v)\not=0 \}).
	\end{equation*}
	
	Next pick a partition $0=x_0<x_1<\cdots<x_n=1$.
	Without any loss of generality we may assume that $n=2j$ for some $j\in\en$.
	By the claim
	\begin{equation*}
	\begin{aligned}
	\sum_{k=1}^{n-1}&\rho\left(\frac{\gamma_{i}(x_{k})-\gamma_{i}(x_{k-1})}{|\gamma_{i}(x_{k})-\gamma_{i}(x_{k-1})|},
	\frac{\gamma_{i}(x_{k+1})-\gamma_{i}(x_{k})}{|\gamma_{i}(x_{k+1})-\gamma_{i}(x_{k})|}\right)
	\leq \sum_{k=1}^{n-1}C^{var}_0(M, \gamma_{i}(x_{k-1},x_{k+1}))\\
	&\leq \sum_{k=1}^{n-1}C^{var}_0(M, \gamma_{i}(x_{k-1},x_{k+1}))\\
	&=\sum_{k=0}^{j-1}C^{var}_0(M, \gamma_{i}(x_{2k},x_{2(k+1)}))
	+\sum_{k=1}^{j-1}C^{var}_0(M, \gamma_{i}(x_{2k-1},x_{2k+1}))\\
	&\leq 2 C^{var}_0(M, \gamma_{i}(0,1))<\infty.
	\end{aligned}
	\end{equation*}
	Hence $\gamma_i$ has a finite turn and the proof is finished.
	
%	This is enough to prove the lemma due to Lemma~\ref{lem:partittionFT}, Lemma~\ref{L:finiteTurnIsDC} and Theorem~\ref{T:characterizationUWDC} (note that $M^c$ has finitely many connected components since $M$ admits the normal cycle).
	
	Fix $i\in\{1,\dots,j\}$ again. 
	Since $M$ is a Lipschitz domain and $\partial M$ is compact, there	is (by Lemma~\ref{lem:LipsechitzParts}~(b)) a partition $\{0=x_0<\cdots<x_n=1\}$ of $[0,1]$ such that, denoting $\xi_k\coloneqq \gamma|_{[x_{k},x_{k+1}]}$, $\Im(\xi_k)$ is a Lipschitz graph.
	Since $\gamma_i$ has a finite turn, each $\xi_k$ has finite turn as well.
	By Lemma~\ref{L:finiteTurnIsDC} we obtain that each $\Im(\xi_k)$ is a DC graph.
	
	Hence $\partial M$ is a union of finitely many DC graphs.
	Using the fact that $M^{c}$ has only finitely many components (since $M$ admits the normal cycle) we obtain that $M$ is $\UWDC$ by Theorem~\ref{T:characterizationUWDC}.
\end{proof}

\begin{cor}
	Suppose that $M\in \MB_2$ is compact, then $M$ is $\UWDC$.
\end{cor}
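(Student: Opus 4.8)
The plan is to read this off as an immediate consequence of Lemma~\ref{L:lipschitzIsWDC}, whose three hypotheses are each supplied by an earlier result. So the proof is essentially a bookkeeping exercise: check that a compact member of $\MB_2$ is (i) a compact Lipschitz domain, (ii) a set admitting the normal cycle, and (iii) a set satisfying condition (I); then apply Lemma~\ref{L:lipschitzIsWDC}.

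First I would recall that, by the very definition of the class $\MB_d$ (Rataj--Z\"ahle \cite{RZ03},\cite{RZ05}; cf.\ the discussion after Remark~\ref{rem-uniq} and the notion of $d$-dimensional Lipschitz manifold in Section~\ref{subs:LipADCgRAPHS}), every $M\in\MB_2$ is in particular a Lipschitz domain in $\er^2$; since we assume $M$ compact, it is a compact Lipschitz domain, so hypothesis (i) holds. Next, membership in $\MB_d$ guarantees the existence of the normal cycle $N_M$ (this is one of the classes listed in Remark~\ref{rem-uniq}), giving hypothesis (ii). Finally, hypothesis (iii) is exactly Proposition~\ref{prop:innerCurvature}: a compact set belonging to $\MB_d$ satisfies condition (I).

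With all three hypotheses in hand, Lemma~\ref{L:lipschitzIsWDC} applies directly and yields $M\in\UWDC$, which is the claim. I do not anticipate a genuine obstacle here; the only point requiring a little care is making explicit that an $\MB_2$ set really is a Lipschitz domain in the sense used in Lemma~\ref{lem:LipsechitzParts} and Lemma~\ref{L:lipschitzIsWDC}, i.e.\ that its boundary is locally a Lipschitz graph, but this is built into the definition of a Lipschitz manifold of (locally) bounded inner curvature. Thus the corollary follows by combining Proposition~\ref{prop:innerCurvature}, Remark~\ref{rem-uniq}, and Lemma~\ref{L:lipschitzIsWDC}.
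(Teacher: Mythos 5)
Your proposal is correct and matches the paper's own proof, which likewise derives the corollary directly from Lemma~\ref{L:lipschitzIsWDC} together with Proposition~\ref{prop:innerCurvature}. You simply spell out more explicitly the (definitional) facts that an $\MB_2$ set is a compact Lipschitz domain admitting the normal cycle, which the paper leaves implicit.
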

\begin{proof}
Follows from Lemma~\ref{L:lipschitzIsWDC} and  Proposition~\ref{prop:innerCurvature}. 
\end{proof}

\end{document}